\newcommand{\siam}[1]{foo}
\newcommand{\arxiv}[1]{bar}
\renewcommand{\siam}[1]{#1}%
\renewcommand{\arxiv}[1]{\ignorespaces}%
\renewcommand{\siam}[1]{\ignorespaces}%
\renewcommand{\arxiv}[1]{#1}%
  \definecolor{darkblue}{rgb}{0,0,.5}
  \theoremstyle{plain}
  \newtheorem{theorem}{Theorem}
  \newtheorem{lemma}{Lemma}
  \newtheorem{proposition}{Proposition}
  \newtheorem{corollary}[theorem]{Corollary}
  \let\oldparagraph=\paragraph
  \renewcommand\paragraph[1]{\oldparagraph{#1.}}
\long\def\@makecaption#1#2{
  \vskip 0.8ex
  \setbox\@tempboxa\hbox{\small {\bf #1:} #2}
  \parindent 1.5em  %
  \dimen0=\hsize
  \advance\dimen0 by -3em
  \ifdim \wd\@tempboxa >\dimen0
  \hbox to \hsize{
    \parindent 0em
    \hfil 
    \parbox{\dimen0}{\def\baselinestretch{0.96}\small
      {\bf #1.} #2
    } 
    \hfil}
  \else \hbox to \hsize{\hfil \box\@tempboxa \hfil}
  \fi
}
\newtheorem{assumption}{Assumption}
\newcommand{\mc}[1]{\mathcal{#1}}
\newcommand{\wt}[1]{\widetilde{#1}}  %
\newcommand{\wb}[1]{\overline{#1}} %
\newcommand{\norm}[1]{\left\|{#1}\right\|} %
\newcommand{\normbigg}[1]{\bigg\|{#1}\bigg\|} %
\newcommand{\norms}[1]{\|{#1}\|} %
\newcommand{\opnorm}[1]{\norm{#1}}  %
\newcommand{\R}{\mathbb{R}} %
\newcommand{\N}{\mathbb{N}} %
\renewcommand{\P}{\mathbb{P}}	%
\newcommand{\I}{\mathbb{I}} %
\newcommand{\sphere}{\mathbb{S}} %
\providecommand{\argmin}{\mathop{\rm argmin}}
\providecommand{\diag}{\mathop{\rm diag}}
\newcommand{\hinge}[1]{\left({#1}\right)_+} %
\providecommand{\minimize}{\mathop{\rm minimize}}
\providecommand{\subjectto}{\mathop{\rm subject\;to}}
\newcommand{\ceil}[1]{\left\lceil{#1}\right\rceil}
\newcommand{\half}{\frac{1}{2}}
\newcommand{\defeq}{\coloneqq}
\newcommand{\eqdef}{\eqqcolon}
\newcommand{\grad}{\nabla}
\newcommand{\hess}{\nabla^2}
\def\ie{\emph{i.e.\ }}
\newcommand{\Otil}[1]{\widetilde{O}\left( #1 \right)}
\newcommand{\T}{T} %
\newcommand{\eps}{\varepsilon}
\newcommand{\del}{\partial}
\newcommand{\Ls}{L\subopt}
\newcommand{\IndNC}{}
\newcommand{\tgrow}{\tau_{\mathrm{grow}}}
\newcommand{\tconv}{\tau_{\mathrm{converge}}}
\newcommand{\tiltgrow}{\tilde{\tau}_{\mathrm{grow}}}
\newcommand{\tiltconv}{\tilde{\tau}_{\mathrm{converge}}}
\newcommand{\sigbar}{\wb{\sigma}}
\newcommand{\smallval}{\nu}
\newcommand{\SP}{Find-SOSP}
\newcommand{\SSP}{Solve-CR}
\newcommand{\SFSP}{Solve-Quadratic}
\newcommand{\slb}{r}
\newcommand{\siter}{\Delta^\star}
\newcommand{\glb}{g\subopt}
\newcommand{\epsgrad}{\eps_{\mathrm{g}}}
\newcommand{\subfin}{_{\textup{final}}}
\newcommand{\Tinner}{T_{\textup{inner}}}
\newcommand{\Touter}{T_{\textup{outer}}}
\newcommand{\Tfinal}{T\subfin}
\newcommand{\callCP}[1]{\hyperref[func:CP]{\Call{\CP}{#1}}}
\newcommand{\callSP}[1]{\hyperref[func:SP]{\Call{\SP}{#1}}}
\newcommand{\callSSP}[1]{\hyperref[func:SSP]{\Call{\SSP}{#1}}}
\newcommand{\callSFSP}[1]{\hyperref[func:SFSP]{\Call{\SFSP}{#1}}}
\newcommand{\f}[1][A,b]{f_{#1}}
\newcommand{\fcu}[1][A,b,\rho]{f_{#1}}
\newcommand{\As}{A\subopt}
\newcommand{\scu}{x_{\star}^{\mathsf{cr}}}  %
\newcommand{\sol}{x_{\star}^{\mathsf{cr}}}  %
\newcommand{\soltilde}{\tilde{x}_{\star}^{\mathsf{cr}}}  %
\newcommand{\Rupper}{R_\rho}
\newcommand{\Rcauchy}{R_{\mathsf{cr}}}  %
\newcommand{\xcauchy}{x_{\mathsf{cr}}}  %
\newcommand{\str}[1][]{{x\subopt^{{\mathsf{tr}}#1}}} %
\newcommand{\ltr}{\lambda_{\mathsf{tr}}}
\newcommand{\Atr}{A_{\ltr}}
\newcommand{\lmin}{\lambda_{\min}}
\newcommand{\lmax}{\lambda_{\max}}
\newcommand{\vmin}{v_{\min}}
\newcommand{\vmax}{v_{\max}}
\newcommand{\itertr}{x^{\mathsf{tr}}}
\newcommand{\gradmap}{\mathsf{G}}
\newcommand{\rs}{\rho \norm{\scu}}
\newcommand{\jittertr}{\hat{x}^{\mathsf{tr}}}  %
\newcommand{\jittercu}{\hat{x}^{\mathsf{cr}}}  %
\newcommand{\itercu}{x^{\mathsf{cr}}}
\DeclareDocumentCommand{\Krylov}{ O{t} O{A,b} }{\mc{K}_{#1}(#2)}
\DeclareDocumentCommand{\minmaxU}{ O{n} O{\kappa} }{
  \mathfrak{U}_{#1}\left(#2\right)}
\newcommand{\supind}[1]{^{\left({#1}\right)}}
\newcommand{\opt}{^\star}
\newcommand{\subopt}{_\star}
\newcommand{\uniform}{\mathsf{Uni}} %
\newcommand{\univar}{u}  %
\newcommand{\betadist}{\mathsf{Beta}} %
\definecolor{innerboxcolor}{rgb}{.9,.95,1}
\definecolor{outerlinecolor}{rgb}{.6,0,.2}
\newcommand\blfootnote[1]{%
	\begingroup
	\renewcommand\thefootnote{}\footnote{#1}%
	\addtocounter{footnote}{-1}%
	\endgroup
}
\numberwithin{theorem}{section}
\numberwithin{lemma}{section}
\numberwithin{proposition}{section}
\title{{First-order methods for nonconvex \\ quadratic
    minimization}\thanks{February 19, 2020.  \funding{YC and JCD were
      partially supported by the SAIL-Toyota Center for AI Research and the
      Office of Naval Research award N00014-19-2288. YC was partially
      supported by the Stanford Graduate Fellowship and the Numerical
      Technologies Fellowship. JCD was partially supported by the National
      Science Foundation award NSF-CAREER-1553086}}}
 \author{
 	Yair Carmon\thanks{Department of Electrical Engineering, Stanford 
 		University
 		(\email{yairc@stanford.edu}).}
 	\and
 	John C. Duchi\thanks{Departments of Statistics and Electrical 
 	Engineering, 
 		Stanford University (\email{jduchi@stanford.edu}).}
 }
  \title{First-Order Methods for Nonconvex Quadratic Minimization}
  \author{Yair Carmon ~~~ John C.\ Duchi\\   
    \texttt{\{\href{mailto:yairc@stanford.edu}{yairc},%
      \href{mailto:jduchi@stanford.edu}{jduchi}\}@stanford.edu}}
  \date{}
\begin{document}
\maketitle

\begin{abstract}
  We consider minimization of indefinite quadratics with either trust-region 
  (norm) constraints or cubic regularization. Despite the nonconvexity of 
  these problems we prove that, under mild assumptions, gradient descent 
  converges to their global solutions, and give a non-asymptotic rate of 
  convergence for the cubic variant. We also consider Krylov subspace 
  solutions and establish sharp convergence guarantees to the solutions of 
  both trust-region and cubic-regularized problems.
  Our rates mirror the behavior of these methods on convex 
  quadratics and eigenvector problems, highlighting their scalability. 
  When we use Krylov subspace solutions to approximate the 
  cubic-regularized Newton step, our results recover the strongest known 
  convergence guarantees to approximate second-order stationary points of 
  general smooth nonconvex functions.
  \blfootnote{%
    This is a SIAM Review preprint covering our papers~\cite{CarmonDu19} 
    and~\cite{CarmonDu18}; some materials in 
    Section~\ref{sec:majorization} are new.
  }
\end{abstract}

\siam{
\begin{keywords}
  Gradient descent, Krylov subspace methods, nonconvex quadratics, 
  cubic 
  regularization, trust-region methods, 
  global optimization, Newton's method, non-asymptotic 
  convergence
\end{keywords}

\begin{AMS}
	65K05, 90C06, 90C20, 90C26, 90C30
\end{AMS}
}

\newcommand{\regpenalty}{\mathsf{reg}}

\section{Introduction}

Consider the potentially nonconvex quadratic function
\begin{equation*}
  \f(x) \defeq \half x^T A x + b^T x,
\end{equation*}
where $A \in \R^{d \times d}$ is symmetric and possibly indefinite and $b 
\in \R^d$. 
We wish to solve the 
problems
\renewcommand{\theequation}{\textsc{P.tr}}
\begin{equation}
  \label{eqn:problem-tr}
  \minimize_{x\in\R^d}
  \f(x) ~ \subjectto \norm{x} \le R
\end{equation}
and
\renewcommand{\theequation}{\textsc{P.cu}}
\begin{equation}
  \label{eqn:problem-cubic}
  \minimize_{x\in\R^d}   \fcu(x) \defeq \f(x) + \frac{\rho}{3} \norm{x}^3,
\end{equation}
where $R$ and $\rho \ge 0$ are regularization parameters.  These problems
arise primarily in the family of trust-region and cubic-regularized Newton
methods for general nonlinear 
optimization~\cite{ConnGoTo00,NesterovPo06, Griewank81,CartisGoTo11},
which optimize a smooth function $g$ by iteratively
minimizing second-order models of $g$ centered at
an iterate $x$, which take the form
\begin{equation*}
  g(y) \approx g_x(y) \defeq
  g(x) +
  \underbrace{\grad g(x)^T(y - x) + \half (y - x)^T \grad^2 g(x) (y - x)}_{
    = \f[\hess g(x), \grad g(x)](y-x)}.
\end{equation*}
Such models tend to be unreliable when $y$ is far from $x$, particularly
in the nonconvex setting when it is possible that $\grad^2 g(x) \not\succ 
0$.
Trust-region and cubic regularization models address this by
instead (approximately) iterating
\renewcommand{\theequation}{\arabic{equation}}
\setcounter{equation}{0}
\begin{equation}
  \label{eqn:generic-problem-iteration}
  y_{k + 1} \approx \argmin_\Delta
  \left\{\f[\hess g(y_k), \grad g(y_k)](\Delta)
  + \regpenalty(\norm{\Delta})\right\},
\end{equation}
where $\regpenalty$ regularizes large $\norm{\Delta}$; in trust-region
methods by a hard constraint so that the model $\f[\hess g(y_k), \grad
  g(y_k)]$ is accurate, and in cubic-regularization methods by
$\norm{\Delta}^3$ so that the penalized model is a locally accurate upper
bound on $g$~\cite{ConnGoTo00, NesterovPo06, CartisGoTo11}.  Trust-region
and cubic-regularized model-based
methods offer a principled and
powerful platform for integrating second-order information into the 
optimization procedure.

The centrality of these methods motivates considerable interest in solving
their corresponding subproblems~\cite{ConnGoTo00, NocedalWr06, CartisGoTo11,
  HazanKo16, Ho-NguyenKi17, ZhangShLi17}. This becomes computationally
challenging in high-dimensional settings, where direct decomposition (or
even storage) of the matrix $A$ is infeasible. In many scenarios, however,
computing matrix-vector products $v \mapsto Av$ is feasible.
As particular examples, when $A$ is sparse or given explicitly by a 
low-rank factorization, this is feasible;
if $A = \hess g(x)$ for a smooth function $g$, then $A v
= \frac{\grad g(x + tv) - g(x)}{t} + O(t)$ is approximable to arbitrary
accuracy by finite differences;
if $A$ is the Hessian of a neural network, we can compute Hessian-vector 
products
efficiently on batches of training data~\cite{Pearlmutter94,Schraudolph02} 
via back-propagation.

\subsection{Outline of methods and our contribution} %

We study first-order methods for solving problems~\eqref{eqn:problem-tr}
and~\eqref{eqn:problem-cubic} that access the matrix $A$ only through
matrix-vector product evaluations. Our main goal is to characterize the
number of evaluations these methods require to reach a desired accuracy
$\eps$ in the regime where the problem dimension $d$ is very high.  We
establish nearly dimension-free bounds---depending at most 
logarithmically on
$d$---highlighting the scalability of first-order methods.  In particular,
we study gradient descent and Krylov subspace methods, lynchpins of 
optimization and in frequent use for problems~\eqref{eqn:problem-tr}
and~\eqref{eqn:problem-cubic}. %

\paragraph{Gradient descent}
For the trust-region problem~\eqref{eqn:problem-tr}, gradient descent 
iterates
\begin{equation}
  \label{eq:grad-iter-tr}
  x_{t + 1} =
  \Pi_R(x_t - \eta \grad \f(x_t))
  = \Pi_R(x_t - \eta (Ax_t + b)),
\end{equation}
where $\eta >0$ is a step size parameter and 
$\Pi_R$ is the Euclidean projection to the ball of radius $R$. For the
cubic-regularized problem~\eqref{eqn:problem-cubic}, it is simply 
\begin{equation}
  \label{eq:grad-iter}
  x_{t + 1} =
  x_t - \eta \grad \fcu(x_t)
  = (I - \eta A - \rho \eta \norm{x_t} I) x_t - \eta b.
\end{equation}
In neither case is $f$ necessarily convex, so it is not \emph{a-priori}
clear that gradient descent even converges to global subproblem solutions;
we establish such global convergence under standard and weak assumptions on
$\eta$ and the initialization $x_0$.

For the cubic regularized problem we prove that the number of 
steps to reach accuracy $\eps$ scales at most as
$\min\{\kappa, \frac{1}{\eps}\} \log \frac{1}{\alpha \eps}$, where 
$\kappa$ 
is a problem-dependent condition number and $\alpha = |\vmin^T 
b|/\norm{b}$ is the normalized 
inner product between $b$ and the eigenvector of $A$ corresponding to its 
smallest eigenvalue. We establish these rates by breaking the gradient 
descent trajectory into two phases and bounding their durations; the first 
stage consists of the iterate norm rapidly growing away from the origin 
(thereby escaping all saddle points), while the second stage consists of 
contraction towards the global solution.

\paragraph{Krylov subspace methods}
Krylov subspace methods 
iterate for $t = 1, 2, \ldots$ by solving
the problems~\eqref{eqn:problem-tr} and~\eqref{eqn:problem-cubic} 
over the \emph{Krylov subspaces}
\begin{equation}
  \label{eqn:krylov-subspace}
  \Krylov[t][A,b]
  \defeq \mathrm{span}\{b, Ab, \ldots, A^{t-1}b\}
  = \{p(A)b\mid \text{$p$ is a degree $t-1$ polynomial}\},
\end{equation}
iteratively setting
\begin{equation}
  \label{eqn:krylov-iterates}
  \itertr_t = \argmin_{x \in \Krylov} \{\f(x)
  \mid \norm{x} \le R\}
  ~~ \mbox{or} ~~
  \itercu_t = \argmin_{x \in \Krylov} \{\fcu(x)\}
\end{equation}
for problems~\eqref{eqn:problem-tr} and~\eqref{eqn:problem-cubic},
respectively.  The Lanczos method can compute these solutions in time
dominated by the matrix-vector product cost
(see~\cite[Sec.~2]{GouldLuRoTo99,CartisGoTo11} and
Appendix~\ref{sec:lanczos}). Krylov subspace methods are familiar for
large-scale numerical problems, including conjugate gradient
methods, eigenvector problems, and the solution of linear 
systems~\cite{HestenesSt52,
  Nemirovski94,TrefethenBa97,GolubVa89}.

Since the $t$th iteration of gradient descent (initialized at the origin)
lies in $\mc{K}_t(A,b)$, Krylov subspace methods converge faster than
gradient descent by construction. We prove that they are in fact
quadratically faster, showing that $\mc{K}_t(A,b)$ contains an
$\eps$-optimal solution in at most $\min\{\sqrt{\kappa}\log\frac{1}{\eps},
\frac{1}{\sqrt{\eps}}\log \frac{1}{\alpha}\}$ iterations, with $\kappa$ and
$\alpha$ as defined above; this bound applies to both trust-region and
cubic-regularized subproblems. Our analysis follows the well-established
practice of appealing to uniform polynomial
approximations~\cite{TrefethenBa97, Nemirovski94} to construct ``good''
elements in $\mc{K}_t(A,b)$ achieving the
desired convergence. Complementing this approach, we  construct 
additional  reference elements in 
$\mc{K}_t(A,b)$ based on Nesterov's accelerated gradient 
method~\cite{Nesterov83,Nesterov04,Tseng08}.
The Krylov iterates~\eqref{eqn:krylov-iterates}
are then by construction better.

For both gradient descent and Krylov subspace methods, our rates of
convergence mirror and unify well-known guarantees for two special cases:
convex problems ($A\succeq 0$) and the eigenvector problem ($b=0$ and
$A\nsucc0$)~\cite{GolubVa89,TrefethenBa97}; see
Section~\ref{sec:discussion}.

\oldparagraph{Randomization for the ``hard case.''} 
The above iteration count bounds become vacuous for problem instances 
where $\kappa = \infty$ and $\rho=0$, the ``hard 
case''~\cite{ConnGoTo00}. We provide two randomization techniques: the 
first slightly perturbs $b$, and the second  expands the Krylov 
basis~\eqref{eqn:krylov-subspace} in a random direction. These techniques 
allow us to replace the term $\alpha= |\vmin^T b|/\norm{b}$ with 
$\frac{1}{\sqrt{d}}$, thus yielding high-probability convergence rates of the 
form $\frac{1}{\eps}\log\frac{d}{\eps}$ for gradient descent and 
problem~\eqref{eqn:problem-cubic}, and $\frac{1}{\sqrt{\eps}}\log d$ for 
Krylov subspace methods for~\eqref{eqn:problem-tr} and 
\eqref{eqn:problem-cubic}.

\paragraph{A first-order implementation of cubic-regularized 
Newton steps}
Returning to the model-based nonlinear optimization methods motivating 
our work, we integrate our Krylov subspace solver into a simple version of 
the cubic-regularized Newton 
method~\cite{Griewank81,NesterovPo06,WeiserDeEr07}. Leveraging the 
analysis of~\citet{NesterovPo06} and our convergence guarantees, we show 
that for a function $g$ with Lipschitz gradient and Hessian, a
method approximating the iteration~\eqref{eqn:generic-problem-iteration}
finds an $\epsilon$ second-order stationary point (satisfying 
$\norm{\grad g(x)} \le \epsilon$ and $\lambda_{\min}(\hess g(x)) \gtrsim 
-\sqrt{\epsilon}$) with roughly $\epsilon^{-3/2}$ 
gradient evaluations and $\epsilon^{-7/4}\log\frac{d}{\epsilon}$ 
Hessian-vector product evaluations. In comparison, simply applying 
gradient descent on $g$ requires $\epsilon^{-2}$ gradient evaluations to 
guarantee $\norm{\grad g(x)} \le \epsilon$ and does not provide a 
near-positivity guarantee on the Hessian.

\subsection{Prior work}
Despite their nonconvexity, it is possible to solve the
subproblems~\eqref{eqn:problem-tr} and~\eqref{eqn:problem-cubic} to machine
precision by iterative solution to linear systems of the form $(A+\lambda
I)x=-b$ with Newton-type procedures for the scalar
$\lambda$~\cite{ConnGoTo00,CartisGoTo11}. To handle large scale instances,
earlier work proposes both heuristic variants of the conjugate gradient
method~\cite{Griewank81,Steihaug83} and Krylov subspace
solutions~\cite{GouldLuRoTo99,CartisGoTo11}. While these works 
demonstrate strong practicality and are in common use, they do not bound 
the
iterations required to obtain approximate solutions.\footnote{%
  For almost all $A,b$, the Krylov subspace of order $d$ is $\R^d$, and
  consequently $d$ steps solve~\eqref{eqn:problem-tr}
  and~\eqref{eqn:problem-cubic} in exact arithmetic. However, guarantees of
  this type break down under finite precision~\cite{TrefethenBa97} and
  provide limited insight for high-dimensional problems, where the number of
  iterations is typically $\ll d$.  } Several
works~\cite{TaoAn98,BianconciniLiMoSc15,BeckVa18} also apply variants of
gradient descent to the subproblems~\eqref{eqn:problem-tr}
and~\eqref{eqn:problem-cubic} yet without dimension-free convergence
guarantees.

A recent thread of research has begun to give (nearly) dimension-free
theoretical bounds for
first-order-like methods. \citet{HazanKo16} give the first such guarantee,
finding an $\eps$-approximate solution with $\Otil{{1}/{\sqrt{\eps}}}$
matrix-vector products by reducing the trust-region subproblem to a sequence
of eigenvector problems and solving them approximately with an efficient
first-order method.  \citet{Ho-NguyenKi17} provide a different perspective,
using a single eigenvector calculation to reformulate the nonconvex
quadratic trust-region problem into a convex quadratically constrained 
quadratic program.
Unfortunately, these methods are less conducive to efficient implementation
than those above: each has several parameters that require
tuning, and we are unaware of numerical experiments testing them.

\citet{ZhangShLi17}, in work contemporaneous to the initial submission of
the work~\cite{CarmonDu19}, take an important step towards sharp analysis of
practical methods, showing a rate of convergence of the form
$\sqrt{\kappa}\log\frac{1}{\eps}$ for Krylov subspace solutions to the
trust-region problem.  Based on these bounds, the authors propose novel
stopping criteria for subproblem solutions in the trust-region optimization
method, showing good empirical results.  We complete the picture, showing
for Krylov subspace methods an $\eps^{-1/2}\log{d}$ convergence
guarantee that holds in the hard case where $\kappa=\infty$ and
extending the analysis to cubic regularization, for which we also give a
comprehensive analysis of gradient descent. 

Much of the literature on the problems~\eqref{eqn:problem-tr} 
and~\eqref{eqn:problem-cubic} considers them in the context of 
model-based optimization algorithms. \citet{ConnGoTo00} provide
a detailed account of trust-region methods. Cubic 
regularization of Newton's method was first proposed 
by~\citet{Griewank81} and subsequently independently rediscovered 
by~\citet{NesterovPo06} and~\citet{WeiserDeEr07}.
\citet{NesterovPo06} prove that for $g$ with Lipschitz Hessian and exact
subproblem solutions~\eqref{eqn:generic-problem-iteration},
cubic-regularized Newton's method
finds $\epsilon$ second-order stationary points in
order of $\epsilon^{-3/2}$ iterations; this is the first non-asymptotic
convergence rate to second-order stationarity as well as the first
improvement on gradient descent's $\epsilon^{-2}$ rate of convergence to 
first-order
stationarity.

\citet{CartisGoTo11b} give sufficient conditions on the accuracy of 
approximate subproblem solutions under which the $\epsilon^{-3/2}$ 
bound on subproblem number persists, though they leave open how to 
meet these conditions with a scalable subproblem solver. We provide 
alternative sufficient conditions, which we satisfy using the Krylov 
subspace method with roughly $\epsilon^{-1/4}$ Hessian-vector products 
per subproblem. Our approach is less practical than that of Cartis et al.\ 
(assuming knowledge of problem parameters rather than adapting to them 
as in~\cite{CartisGoTo11,CartisGoTo11b,ConnGoTo00}), but it 
nevertheless allows us to demonstrate that order roughly 
$\epsilon^{-7/4}$ gradient and Hessian-vector product evaluations are 
sufficient to guarantee $\epsilon$ second-order stationarity.

\subsection{Concurrent and subsequent work}
\label{sec:concurrent-subsequent}
The papers~\cite{CarmonDu19,CarmonDu18} forming the basis of this 
paper are part of an active body of research seeking better 
understanding of and efficient methods for nonconvex optimization.
We highlight three lines of work that closely interact with the 
contributions of our paper.

\paragraph{Improved rates for finding stationary points}
Approximate stationarity (a point $x$ satisfying $\norm{\grad g(x)} \le
\epsilon$) serves as a proxy for local optimality, and complexity estimates
to achieve it serve as a yardstick for comparing different methods. Gradient
descent finds an $\epsilon$-stationary point of functions with Lipschitz
gradient in $\epsilon^{-2}$ gradient evaluations~\cite{Nesterov04}, and this
is unimprovable without further
assumptions~\cite{CarmonDuHiSi19}.  Yet additional structure allows
improvement: if the Hessian $\grad^2 g(x)$ is Lipschitz continuous, several
recent first-order methods achieve convergence to $\epsilon$-stationarity in
roughly $\epsilon^{-7/4} \log d$ steps.  \citet{AgarwalAlBuHaMa17} propose a
variant of the cubic-regularized Newton method with an elaborate subproblem
solver based on reduction to eigenvalue computation. In independent work
with collaborators Hinder and Sidford~\cite{CarmonDuHiSi18}, we give a
different algorithm based on Nesterov's accelerated gradient descent and the
Lanczos method that attains the same first-order complexity. In subsequent
work~\cite{CarmonDuHiSi17} we propose a simpler technique using Nesterov
acceleration directly; this method is capable of exploiting even third-order
Lipschitz continuity, under which its rate of convergence improves to
$\wt{O}(\epsilon^{-5/3})$.
 Royer et
al.~\cite{RoyerWr18,RoyerONWr19} show that a careful
implementation of established techniques (line search and Newton CG)
also attains the improved
complexity~$\wt{O}(\epsilon^{-7/4})$. In this paper, we further strengthen
this point of view by showing that cubic regularization with a classical
Krylov subspace method attains this improved complexity as well.

\paragraph{Large-scale second-order methods}
In many large-scale problems---particularly those 
arising in machine learning---noisy evaluation of the objective and its 
derivatives is far cheaper than exact evaluation, motivating the use of 
stochastic gradient 
methods~\cite{BottouCuNo18}. Several 
works attempt to extend second-order model-based optimization techniques
to the stochastic setting, with some promising empirical 
findings~\cite{KohlerLu17,XuRoMa19,MartensGr15}. Adopting a theoretical 
perspective~\citet{TripuraneniStJiReJo18}
analyze a sub-sampled cubic-regularized Newton method, solving
sub-problems using our gradient descent scheme~\cite{CarmonDu19};
the noise inherent in stochastic sampling means that replacing gradient 
descent with the Krylov subspace method does not improve the leading 
terms in their complexity bound.

\paragraph{Structured nonconvex problems and their analysis}
Global minimization of nonconvex functions is generally
intractable~\cite{NemirovskiYu83,MurtyKa87}.  Yet a growing body of work  
identifies families of practically important structured problems that admit 
efficient solutions. There are (to us) two broad approaches. The first is a 
``classical'' decoupling approach, which shows that certain local solutions 
to the problem (e.g., second-order stationary points) are in fact global, and 
then argues that standard algorithms find these local solutions; examples 
include
matrix completion~\cite{GeLeMa16}, phase retrieval~\cite{SunQuWr18},
more general low-rank problems~\cite{GeJiZh17}, and
linear dynamical system identification~\cite{HardtMaRe18}.
The second we term the ``dynamics-based'' approach,
where the trajectory of an optimization method is central and one
proves it converges to a global minimum. Here,
the simplicity of gradient descent makes it essentially the only
feasibly analyzed algorithm, and examples include
guarantees for two-layer neural networks~\cite{LiYu17} and matrix
completion, phase retrieval, and blind deconvolution~\cite{MaWaChCh19}.
We view our analysis of gradient descent as a potential prototype for
the latter trajectory-based approaches, providing
a particularly simple example of the mechanism that keeps gradient descent
away from the bad local minimum and allows it to quickly bypass saddle
points. 
In contrast, our analysis of Krylov methods falls firmly
in the former approach. 

\subsection{Paper organization}
In Section~\ref{sec:prelim} we define our notation and review basic 
structural properties of the problems we study. Section~\ref{sec:gd} gives 
our results for gradient descent and Section~\ref{sec:krylov} gives our 
results for Krylov subspace methods. We revisit both methods in 
Section~\ref{sec:the-hard-case} when we tackle the hard case via 
randomization.
To illustrate our results, 
we accompany 
Sections~\ref{sec:gd}--\ref{sec:the-hard-case}  with numerical 
experiments. Then, in 
Section~\ref{sec:majorization}, we apply our randomized Krylov subspace 
solver within a cubic regularized model-based optimization method for 
general nonlinear functions, and establish a rate of convergence to 
approximate second order stationary points. Section~\ref{sec:discussion} 
concludes our paper by situating our approaches in the context of
convergence guarantees for convex optimization and eigenvector problems.

\section{Preliminaries and solution structure}\label{sec:prelim}
\paragraph{Notation}
Recall the function $\f(x) = \half x^T A x +
b^T x$, where $b\in\R^{d}$ and $A\in \R^{d \times d}$ is a symmetric
(possibly indefinite) matrix.
The eigenvalues of the matrix $A$ are
$\lambda\supind{1} (A) \le 
\cdots\leq\lambda\supind{d}(A)$,
where the $\lambda\supind{i}(A)$ may be negative, and have
associated eigenvectors
$v_1, \ldots, v_d$, so that
$A = \sum_{i = 1}^d \lambda\supind{i}(A) v_i v_i^T$.
Importantly, throughout the paper we
work in the eigenbasis of $A$, and for any vector $w \in \R^d$ we let
\begin{equation}\label{eq:superscript-convention}
  w\supind{i} = v_i^T w~\mbox{denote the $i$th
    coordinate of $w$ in the eigenbasis of $A$.} 
\end{equation}
We also let $\lmin$ and $\lmax$ be the minimum and maximum eigenvalues
of $A$, and $\vmin$ and $\vmax$ be the corresponding (unit) eigenvectors.
We let $\opnorm{\cdot}$ be the $\ell_2$-operator norm, so $\opnorm{A} =
\max_{\norm{u} \le 1} \norm{A u} = \max_i |\lambda\supind{i}(A)|$, and define
\begin{equation*}
  \beta \defeq \opnorm{A}
  = \max\{-\lmin, \lmax\},
\end{equation*}
Our results frequently depend on the quantity $\beta$, but they hold for any
estimate satisfying $\beta \ge \opnorm{A}$.  We say a function $g$ is
$L$-\emph{smooth} on a set $X$ if $\norm{\grad g(x) - \grad g(y)} \le
L \norm{x - y}$ for all $x, y \in X$.  We denote the positive
part of $s\in\R$ by $\hinge{s} = \max\{s,0\}$.

\subsection{Characterization of solutions}
We let $\str$ be a solution of the trust region
problem~\eqref{eqn:problem-tr}, while $\scu$ denotes a solution of the
cubic-regularized quadratic problem in~\eqref{eqn:problem-cubic}.  The
structure of the problems allows relatively transparent characterizations of
their solutions~\cite[e.g.][]{Martinez94,ConnGoTo00,NesterovPo06}:
\begin{proposition}[%
    \cite{ConnGoTo00}, Ch.~7 and
    \cite{CartisGoTo11}, Theorem 3.1]
  \label{prop:characterization}
  A vector $\str$ solves the
  trust-region problem~\eqref{eqn:problem-tr} if
  and only if there exists $\ltr$ such that
  \begin{equation}
    \label{eqn:trust-region-char}
    (A +\ltr) \str + b = 0,
    ~~ \ltr \ge \hinge{-\lmin}, ~~
    \mbox{and} ~~
    \ltr(R - \norms{\str}) = 0,
  \end{equation}
  and $\str$ is unique if $\ltr > \hinge{-\lmin}$.
  A vector $\scu$ solves the
  cubic-regularized problem~\eqref{eqn:problem-cubic}
  if and only if
  \begin{equation}
    (A + \rho \norm{\scu} I) \scu + b = 0
    ~~ \mbox{and} ~~ \rho \norm{\scu} \ge \hinge{-\lmin},
    \label{eqn:cubic-optimality}
  \end{equation}
  and $\scu$ is unique if $\rho \norm{\scu} > \hinge{-\lmin}$.
\end{proposition}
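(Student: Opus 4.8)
The plan is to route everything through one algebraic identity and then treat the necessity direction via small perturbations. The identity I would establish first: whenever $\lambda\in\R$ and $y\in\R^d$ satisfy $(A+\lambda I)y+b=0$, substituting $b=-(A+\lambda I)y$ into $\f$ and expanding gives, for every $x$,
\begin{equation}
  \f(x)-\f(y)=\tfrac12(x-y)^T(A+\lambda I)(x-y)+\tfrac{\lambda}{2}\big(\norm{y}^2-\norm{x}^2\big).
  \label{eqn:plan-identity}
\end{equation}
This is a routine expansion that I would not belabor; granted it, the rest of the proposition follows from elementary manipulations.

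For \emph{sufficiency} in the trust-region case I would apply \eqref{eqn:plan-identity} with $y=\str$, $\lambda=\ltr$: the first term is nonnegative because $\ltr\ge\hinge{-\lmin}\ge-\lmin$ makes $A+\ltr I\succeq0$, and the last term vanishes when $\ltr=0$ while for $\ltr>0$ complementary slackness forces $\norm{\str}=R\ge\norm{x}$ on the feasible ball; hence $\f(x)\ge\f(\str)$ whenever $\norm{x}\le R$. For the cubic case I would take $y=\scu$, $\lambda=\rho\norm{\scu}$, and write $s=\norm{\scu}$, so that \eqref{eqn:plan-identity} rearranges to $\fcu(x)-\fcu(\scu)=\tfrac12(x-\scu)^T(A+\rho s I)(x-\scu)+\rho\,\psi(\norm{x})$ with $\psi(r):=\tfrac{s}{2}(s^2-r^2)+\tfrac13(r^3-s^3)$. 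Here $A+\rho s I\succeq0$ since $\rho\norm{\scu}\ge-\lmin$, and $\psi\ge0$ because $\psi'(r)=r(r-s)$, so $\psi$ attains its minimum value $\psi(s)=0$ at $r=s$; thus $\fcu(x)\ge\fcu(\scu)$ for all $x$. Uniqueness under the strict multiplier inequality is immediate from \eqref{eqn:plan-identity}: then $A+\lambda I\succ0$, so the quadratic term is zero only at $x=y$.

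For \emph{necessity} in the trust-region case I would argue: if $\norm{\str}<R$ then $\str$ is an unconstrained local minimizer, giving $A\str+b=0$ and $A\succeq0$, and $\ltr=0$ satisfies \eqref{eqn:trust-region-char}. If $\norm{\str}=R$ (so $R>0$ and $\str\ne0$), the single active constraint has gradient $2\str\ne0$, so KKT yields $\ltr\ge0$ with $(A+\ltr I)\str+b=0$ and complementary slackness holds automatically; the only real content is $\ltr\ge-\lmin$. To get it I would suppose $\lmin+\ltr<0$ and let $E$ be the $\lmin$-eigenspace: if $\str$ has a nonzero component $u\in E$, the reflected point $x=\str-2u$ is still on the sphere with $x-\str=-2u\in E$, and \eqref{eqn:plan-identity} gives $\f(x)-\f(\str)=2(\lmin+\ltr)\norm{u}^2<0$; if instead $\str\perp E$, I would move along the sphere through $x_t=\sqrt{1-t^2}\,\str+tRv$ for a unit $v\in E$, where $x_t-\str=tRv-\tfrac{t^2}{2}\str+O(t^4)$ forces $\f(x_t)-\f(\str)=\tfrac12 t^2R^2(\lmin+\ltr)+O(t^3)<0$ for small $t\ne0$ --- both contradicting optimality. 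Hence $\ltr\ge\max\{0,-\lmin\}=\hinge{-\lmin}$. The cubic necessity I would then reduce to the trust-region one: if $\scu\ne0$ solves \eqref{eqn:problem-cubic}, then for $\norm{x}\le\norm{\scu}$ one has $\f(x)+\tfrac{\rho}{3}\norm{x}^3\ge\f(\scu)+\tfrac{\rho}{3}\norm{\scu}^3\ge\f(\scu)+\tfrac{\rho}{3}\norm{x}^3$, so $\scu$ solves \eqref{eqn:problem-tr} with $R=\norm{\scu}$; the trust-region result supplies $\ltr\ge\hinge{-\lmin}$ with $(A+\ltr I)\scu+b=0$, while $\grad\fcu(\scu)=(A+\rho\norm{\scu}I)\scu+b=0$, and subtracting the two forces $\ltr=\rho\norm{\scu}$. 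The case $\scu=0$ forces $b=0$ and $A\succeq0$, which matches \eqref{eqn:cubic-optimality} trivially.

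The step I expect to be the main obstacle is exactly proving $A+\ltr I\succeq0$ in the necessity direction. The standard second-order optimality conditions only control the curvature of $\f$ along directions tangent to the constraint sphere, so they do not rule out negative curvature in the remaining normal direction; closing that gap is precisely what the reflection argument (when $\str$ overlaps the bottom eigenspace) and the tangential perturbation (when it does not) are for. Everything else --- the identity \eqref{eqn:plan-identity}, the one-variable bound $\psi\ge0$, and the $\scu=0$ bookkeeping --- is routine.
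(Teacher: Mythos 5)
Your proof is correct and self-contained. Note that the paper does not actually prove this proposition; it cites Conn--Gould--Toint (Ch.~7) and Cartis--Gould--Toint (Thm.~3.1). The only fragment the paper itself argues is sufficiency for the cubic case, via the identity in Section~\ref{sec:cubic-solution-bounds}:
\[
\fcu(x)=\fcu(\scu)+\tfrac12(x-\scu)^T\As(x-\scu)+\tfrac{\rho}{6}\bigl(\norm{\scu}-\norm{x}\bigr)^2\bigl(\norm{\scu}+2\norm{x}\bigr),
\]
and this coincides exactly with your \eqref{eqn:plan-identity} specialized to $\lambda=\rho\norm{\scu}$ together with your $\psi\ge0$ computation (indeed $\rho\,\psi(\norm{x})$ is the paper's cubic remainder, just factored differently). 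Your sufficiency argument for the trust-region case is the obvious analogue and is fine. The genuinely nontrivial content is necessity of $A+\ltr I\succeq 0$, which the usual KKT conditions do not supply because they only control curvature along the sphere's tangent space; your reflection argument (when $\str$ meets the $\lmin$-eigenspace) and sphere-curve perturbation (when it does not) correctly close that gap, and in the second case the cross term $\str^T(A+\ltr I)v$ actually vanishes exactly since $\str\perp E$ and $v\in E$, so the perturbation expansion is even cleaner than the stated $O(t^3)$. The reduction of cubic necessity to trust-region necessity via $\ltr=\rho\norm{\scu}$, and the $\scu=0$ bookkeeping, are both correct. The only quibble is that your case split for trust-region necessity tacitly assumes $R>0$ (if $R=0$ both branches are vacuous), which is a harmless degenerate case.
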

\noindent
In other words, $x$ solves the corresponding problem
if and only if it is stationary and satisfies
$\lambda(x) \ge \hinge{-\lmin}$, where 
$\lambda(x)$ is the Lagrange multiplier for the constraint $\norm{x}\le R$ 
for~\eqref{eqn:problem-tr} and
$\lambda(x)=\rho\norm{x}$ for~\eqref{eqn:problem-cubic};
$x$ is unique if $\lambda(x) > \hinge{-\lmin}$.

 For matrices $A$ with
distinct eigenvalues, each problem may have a single suboptimal local
minimizer, a single local maximizer, and up to $2(d-1)$ saddle points
(cf.~\cite[Section 3]{Griewank81} or \cite[Thm.~3.1]{Martinez94}); see
Figure~\ref{fig:simple-gradients} for an example with $d=2$. The next 
result characterizes the solutions to both the trust-region and 
cubic-regularized problems in terms of
stationarity and the direction $b$ in the space spanned by the 
eigenvector $\vmin$ corresponding to $\lmin$. It forms the basis for our 
analysis of
gradient descent.
\begin{proposition}
  \label{proposition:stationary-eigen}
  Let $b\supind{1} \neq 0$. Then $\str$ and $\scu$ are the unique stationary
  points (respectively) of the objectives~\eqref{eqn:problem-tr}
  and~\eqref{eqn:problem-cubic} satisfying
  \begin{equation*}
    b\supind{1} x\supind{1} \le 0
    ~~~ \mbox{and so necessarily} ~~~
    b\supind{1} x\supind{1} < 0.
  \end{equation*}
\end{proposition}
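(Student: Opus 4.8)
The plan is to use the stationarity/optimality characterizations of Proposition~\ref{prop:characterization} together with the structure of the equations in the eigenbasis of $A$. Write $\lambda(x)$ for the relevant multiplier ($\ltr$ in the trust-region case, $\rho\norm{x}$ in the cubic case); then stationarity says $(A+\lambda(x)I)x + b = 0$, and taking the $i$th coordinate in the eigenbasis gives $(\lambda\supind{i}(A)+\lambda(x))x\supind{i} = -b\supind{i}$ for every $i$. In particular, since $b\supind{1}\ne 0$, the first coordinate satisfies $(\lmin + \lambda(x))x\supind{1} = -b\supind{1} \ne 0$, which forces $x\supind{1}\ne 0$ and $\lmin + \lambda(x)\ne 0$, and moreover $b\supind{1}x\supind{1} = -(\lmin+\lambda(x))(x\supind{1})^2$ has the opposite sign to $\lmin+\lambda(x)$.

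With this identity in hand I would argue in two directions. First, for the solution $\str$ (resp.\ $\scu$): optimality additionally gives $\lambda(x)\ge\hinge{-\lmin}$, hence $\lmin+\lambda(x)\ge 0$; combined with $\lmin+\lambda(x)\ne 0$ this yields $\lmin+\lambda(x) > 0$, so $b\supind{1}\str[]\supind{1} < 0$ (resp.\ $b\supind{1}\scu\supind{1} < 0$). This establishes that every global solution satisfies the strict inequality, which is exactly the ``necessarily $b\supind{1}x\supind{1}<0$'' clause and shows existence of a stationary point with $b\supind{1}x\supind{1}\le 0$. Conversely, I would show such a stationary point is unique: suppose $x$ is stationary with $b\supind{1}x\supind{1}\le 0$. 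The sign identity above gives $\lmin+\lambda(x)\ge 0$, and since it is nonzero we get $\lmin+\lambda(x) > 0$, i.e.\ $\lambda(x) > -\lmin$, and also $\lambda(x)\ge 0$ (it is a norm or a nonnegative multiplier), so $\lambda(x) > \hinge{-\lmin}$. By the optimality characterization in Proposition~\ref{prop:characterization} this precisely means $x$ is the (unique) global solution. Hence the stationary point with $b\supind{1}x\supind{1}\le 0$ is unique and equals $\str$ (resp.\ $\scu$).

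In the cubic case I should double-check the case $b=0$ is excluded (it is, since $b\supind{1}\ne 0$), and that $\scu$ indeed exists — but existence of minimizers of~\eqref{eqn:problem-cubic} follows from coercivity of $\fcu$, and existence for~\eqref{eqn:problem-tr} from compactness of the feasible ball, so this is routine. The one place requiring a little care is the trust-region boundary/interior dichotomy: if $\norms{\str} < R$ then $\ltr = 0$ and we need $\lmin > 0$ for the argument ``$\lmin+\ltr>0$'' — but that is automatic, since $\ltr = 0$ together with $\ltr\ge\hinge{-\lmin}$ forces $\lmin\ge 0$, and nonvanishing of $\lmin+\ltr=\lmin$ then gives $\lmin>0$. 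So the same chain of inequalities goes through uniformly.

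The main obstacle, such as it is, is purely bookkeeping: making sure the strict inequality $\lmin+\lambda(x)\ne 0$ is genuinely used (this is where the hypothesis $b\supind{1}\ne 0$ enters and is essential — without it the suboptimal local minimizer can also have $x\supind{1}=0$), and cleanly invoking the ``uniqueness when $\lambda(x)>\hinge{-\lmin}$'' clause of Proposition~\ref{prop:characterization} in the converse direction rather than re-deriving uniqueness by hand. I do not anticipate any real analytic difficulty; the whole argument is a few lines once the coordinatewise stationarity identity is written down.
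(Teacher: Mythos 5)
Your proposal is correct and follows essentially the same argument as the paper: write the first eigencoordinate of the stationarity equation to get $(\lmin+\lambda)x\supind{1}=-b\supind{1}$, note that $b\supind{1}\ne 0$ forces both factors nonzero (hence the weak inequality $b\supind{1}x\supind{1}\le 0$ is automatically strict and $\lmin+\lambda>0$, i.e.\ $\lambda>\hinge{-\lmin}$), and invoke the uniqueness clause of Proposition~\ref{prop:characterization}. Your explicit split into the two directions (the solution satisfies the inequality; any stationary point satisfying it is the solution) is just a slightly more scaffolded version of the paper's single chain of implications.
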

\begin{proof}
  Let $x$ be a stationary point of either problem and note that it  
  satisfies $Ax+b+\lambda x=0$ for some $\lambda\ge 0$ by 
  Proposition~\ref{prop:characterization}; 
  for~\eqref{eqn:problem-cubic} we have $\lambda=\rho\norm{x}$ and 
  for~\eqref{eqn:problem-tr} $\lambda=\ltr$ is the Lagrange multiplier for 
  the constraint $\norm{x}\le R$. Focusing on the first (eigen)coordinate, we 
  have
    \begin{equation*}
      0 = \vmin^T ((A + \lambda I) x + b)
      = (\lmin + \lambda) x\supind{1} + b\supind{1}.
    \end{equation*}
   Therefore, $b^{(1)}\neq 0$ implies both $x^{(1)} \neq 0$ and
  $\lambda + \lmin \neq 0$. This strengthens the inequality
  $b^{(1)}x^{(1)} \le 0$ to $b^{(1)}x^{(1)} <
  0$. Hence
  $\lambda + \lmin = -b^{(1)}x^{(1)}/[x^{(1)}]^2 > 0$ and consequently 
  $\lambda > (-\lmin)_+$. By Proposition~\ref{prop:characterization} and 
  the above characterization of $\lambda$, the point $x$ is
  the unique global minimum.
\end{proof}

\begin{figure}
  \begin{center}
    \siam{\includegraphics[width=.5\columnwidth]{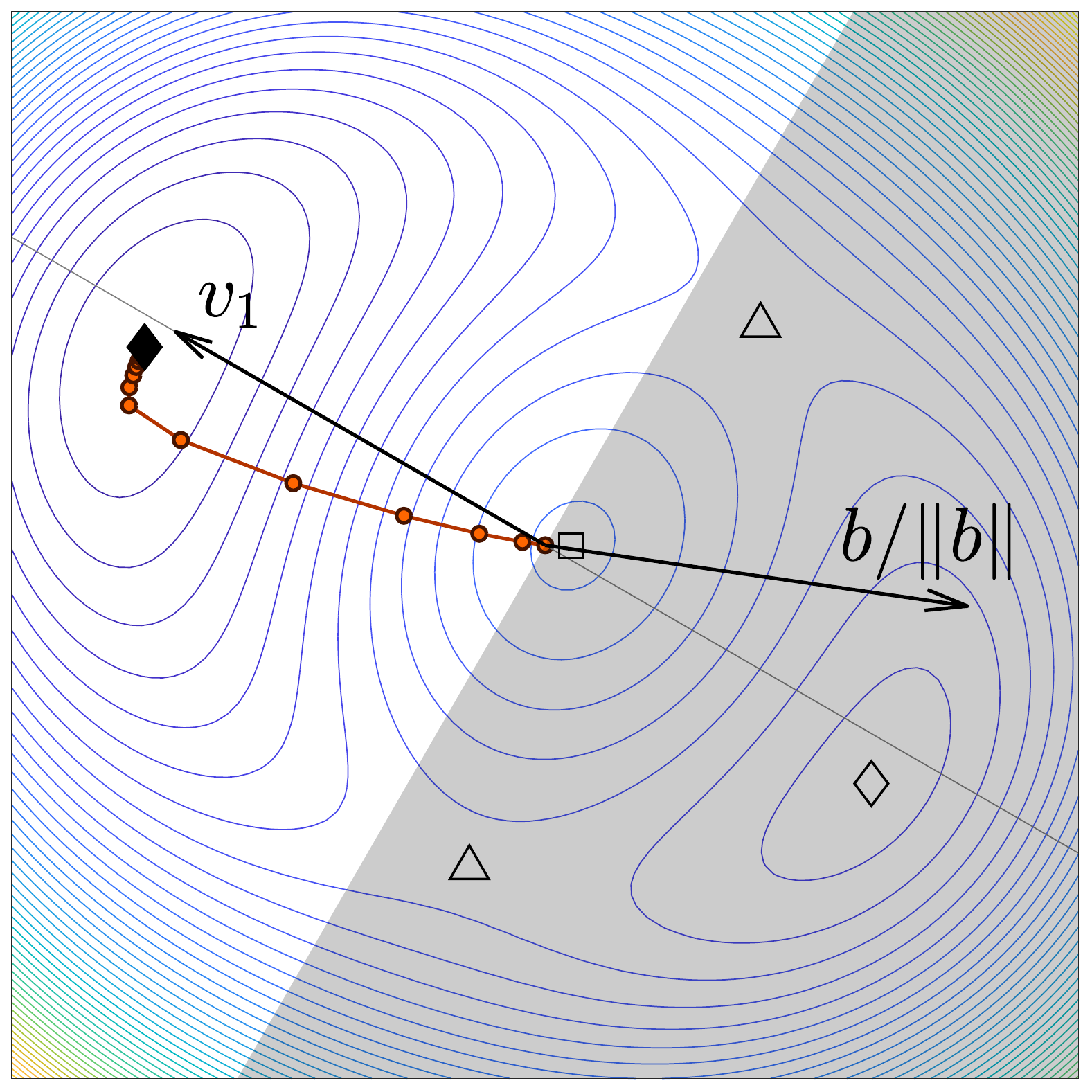}}%
    \arxiv{\includegraphics[width=.4\columnwidth]{figures/figureContourWithHPandGD.pdf}}
    \caption{\label{fig:simple-gradients} Contour plot of a two-dimensional
      instance of \eqref{eqn:problem-cubic}, featuring a local maximum
      ($\square$), saddle points ($\triangle$), and local minima
      ($\lozenge$). The line of circles indicates the path of gradient
      descent initialized at the origin, and the gray area is the half-plane
      $(v_1^T b)(v^T x) = b\supind{1}x\supind{1} > 0$. Note that the global
      minimum is the only critical point outside this half-plane
      (Proposition~\ref{prop:characterization}). The gradient descent
      iterates have increasing norm (Lemma~\ref{lemma:monotone}), lie
      outside the half-plane (Lemma~\ref{lemma:signs}), and converge to
      $\scu$ (Proposition~\ref{prop:converge}).
    }
  \end{center}
\end{figure}

\subsection{Bounds on the solutions}
\label{sec:cubic-solution-bounds}
The magnitude of the solution of~\eqref{eqn:problem-cubic} and its 
optimal value are important in our coming analysis (trivially
$\norm{\str} \le R$), and we
therefore provide bounds for these quantities. First, we define
\begin{equation*}
  \As \defeq A + \rho \norm{\scu} I.
\end{equation*}
By Proposition~\ref{prop:characterization}, $\scu$ solves
problem~\eqref{eqn:problem-cubic} if and
only if it is stationary and $\As \succeq 0$. 
Let $f(x) = \fcu(x)=\half x^T A x  + b^T x + \frac{\rho}{3}\norm{x}^3$
for short.
Then algebraic manipulation shows that
\begin{equation}
  \label{eq:fx-s-expression}
  f\left(x\right)=f\left(\scu\right)+\frac{1}{2}(x-\scu)^T\As(x-\scu) 
  +\frac{\rho}{6}\left(\norm{\scu} -\norm{x} \right)^{2}
  \left(\norm{\scu} +2\norm{x} \right),
\end{equation}
which makes it clear that $\scu$ is indeed the global minimum,
as both of the $x$-dependent terms are non-negative and minimized at
$x=\scu$, and the minimum is unique whenever $\rho \norm{\scu} > -\lmin$,
because $\As \succ 0$ in this case.

To bound the norm  of $\scu$,   observe that $\norm{b} = \norms{\As 
\scu} \ge (\lmin +\rs)
\norm{\scu}$. Solving for $\norm{\scu}$ gives the upper bound
\begin{equation}
  \label{eq:R-upper-def}
  \norm{\scu} \leq
  \frac{-\lmin}{2\rho} + \sqrt{\left(\frac{\lmin}{2\rho}\right)^{2}
    +\frac{\norm{b} }{\rho}} \leq \frac{\beta}{2\rho}
  +\sqrt{\left(\frac{\beta}{2\rho}\right)^{2}+\frac{\norm{b} }{\rho}} \eqdef
  \Rupper
\end{equation}
where we recall that $\beta = \opnorm{A} \ge |\lmin|$.  An analogous lower
bound is available:
\begin{equation}
  \label{eq:Rc-def}
  \norm{\scu} \geq \Rcauchy
  \defeq
  \frac{-b^{\T}Ab}{2\rho\norm{b}^{2}}
  + \sqrt{\left(\frac{b^{\T}Ab}{2\rho\norm{b} ^{2}}\right)^{2}+\frac{\norm{b} }{\rho}}
  \geq
\Rupper - \frac{\beta}{\rho}.
\end{equation}
The quantity $\Rcauchy$ is the \emph{Cauchy radius}~\cite{ConnGoTo00}---the 
magnitude of the (global) minimizer of $f$ in the span of $b$:
$\Rcauchy = \argmin_{\zeta\in\R} f(-\zeta b/\|b\|)$.  To see the claimed lower
bound~\eqref{eq:Rc-def}, set $\xcauchy = -\Rcauchy b/\norm{b}$ (the \emph{Cauchy 
point}) and note by a calculation that
$f(\xcauchy) = -(1/2)\|b\|\Rcauchy -(\rho/6)\Rcauchy^3$. Therefore,
  $0 \le f(\xcauchy) - f(\scu) \le \frac{1}{2}\|b\|(\norm{\scu}-\Rcauchy) + \frac{1}{6}\rho(\norm{\scu}^3 - 
  \Rcauchy^3)$,
which implies $\norm{\scu} \ge \Rcauchy$. 

\newcommand{\proj}{\pi}

\section{Gradient descent for nonconvex quadratics}
\label{sec:gd}
For the problem of minimizing $f(x)$ subject to constraints that
$x \in X$, the projected gradient method begins at 
$x_0 \in \R^d$ and for a fixed stepsize $\eta > 0$ iterates
\begin{equation}
  \label{eqn:pgd}
  x_{t + 1}
  = \argmin_{x \in X} \Big\{f(x_t) + \grad f(x_t)^T (x - x_t)
  + \frac{1}{2 \eta} \norm{x - x_t}^2 \Big\}.
\end{equation}
For the trust-region problem~\eqref{eqn:problem-tr}, where $X=\{x\mid 
\norm{x}\le R\}$, this is the iteration~\eqref{eq:grad-iter-tr},
while for the cubic-regularized problem~\eqref{eqn:problem-cubic}, where 
$X=\R^d$,
this is the iteration~\eqref{eq:grad-iter}.
We will 
show that the iteration~\eqref{eqn:pgd} converges to global
minimizers for both problems~\eqref{eqn:problem-tr}
and~\eqref{eqn:problem-cubic}, providing an asymptotic
guarantee in Sec.~\ref{sec:asymptotic} and an 
explicit rate guarantee in Sec.~\ref{sec:non-asymptotic} for the
iteration~\eqref{eq:grad-iter} for problem~\eqref{eqn:problem-cubic}.

Recalling the
definitions~\eqref{eq:R-upper-def} and~\eqref{eq:Rc-def} of $\Rupper$ and
$\Rcauchy$ as well as $\opnorm{A} = \beta$, throughout our analysis we make
the following assumptions.
\begin{assumption}
  \label{assu:init}
  The initialization $x_0$ of~\eqref{eqn:pgd}
  satisfies $x_{0} = -r\frac{b}{\norm{b}}$
  where $r \in [0, R]$ for problem~\eqref{eqn:problem-tr} or
  $r \in [0, \Rcauchy]$ for problem~\eqref{eqn:problem-cubic}.
\end{assumption}
\begin{assumption}
  \label{assu:step-size}
  The step size $\eta$ satisfies $0 < \eta \le \frac{1}{\beta}$ for
  problem~\eqref{eqn:problem-tr} or $0<\eta\leq \frac{1}{4(\beta+\rho
    \Rupper)}$ for problem~\eqref{eqn:problem-cubic}.
\end{assumption}
\noindent
To select a step size $\eta$ satisfying Assumption~\ref{assu:step-size},
only a rough upper bound on $\opnorm{A}$ is necessary. One way to obtain
such a bound is to apply a few power iterations on $A$.

\subsection{Asymptotic convergence guarantees and iterate structure}
\label{sec:asymptotic}

We begin our analysis via a few properties of the gradient descent
trajectory.  First, we establish that $\|x_t\|$ is monotonic and bounded for
the iteration~\eqref{eq:grad-iter} of gradient descent from 
problem~\eqref{eqn:problem-cubic}.

\begin{lemma}
  \label{lemma:monotone}
  Let Assumptions~\ref{assu:init} and \ref{assu:step-size} hold.
  Then the iterates~\eqref{eq:grad-iter}
  satisfy that $x_t^{\T}\grad \fcu(x_t) \le 0$, the norms $\norm{x_t}$ are
  non-decreasing with $\norm{x_t} \le \norm{\scu}$, and
  $\fcu$ is $(\beta + 2\rho \norm{\scu})$-smooth on the ball $\{x \mid \norm{x}
  \le \norm{\scu}\}$. %
\end{lemma}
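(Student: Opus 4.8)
The plan is to prove the three claims in sequence, each building on the previous. Throughout, recall that $\scu$ is stationary with $\As = A + \rho\|\scu\| I \succeq 0$ by Proposition~\ref{prop:characterization}, and work coordinatewise in the eigenbasis of $A$.

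\textbf{Step 1: the inequality $x_t^\T\grad\fcu(x_t)\le 0$ and monotonicity of norms, by induction.} The base case follows from Assumption~\ref{assu:init}: $x_0 = -r b/\|b\|$ with $r\in[0,\Rcauchy]$, so $x_0$ lies in the span of $b$ and a direct computation gives $x_0^\T\grad\fcu(x_0) = -r\|b\| + \rho r^3 + \text{(quadratic term)}$, which one checks is $\le 0$ precisely because $r\le\Rcauchy$ (the minimizer of $f$ along $-b$). For the inductive step, I would first show the sign condition implies the norm is non-decreasing: since $x_{t+1} = x_t - \eta\grad\fcu(x_t)$, we get $\|x_{t+1}\|^2 = \|x_t\|^2 - 2\eta x_t^\T\grad\fcu(x_t) + \eta^2\|\grad\fcu(x_t)\|^2 \ge \|x_t\|^2$. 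Next I would show $\|x_{t+1}\| \le \|\scu\|$: assuming $\|x_t\|\le\|\scu\|$, write the gradient step in the eigenbasis as $x_{t+1}^{(i)} = (1 - \eta\lambda^{(i)} - \eta\rho\|x_t\|)x_t^{(i)} - \eta b^{(i)}$ and compare this contraction toward the stationary point, using that $(A_\star \scu)^{(i)} = -b^{(i)}$, i.e. $b^{(i)} = -(\lambda^{(i)} + \rho\|\scu\|)\scu^{(i)}$; the step-size bound $\eta\le \frac{1}{4(\beta+\rho\Rupper)}$ keeps every multiplier $1-\eta\lambda^{(i)} - \eta\rho\|x_t\|$ inside $[0,1]$ (using $\|x_t\|\le\|\scu\|\le\Rupper$), so the iteration is a coordinatewise convex combination that cannot overshoot $\|\scu\|$. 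Finally I would re-establish the sign condition at $t+1$: the cleanest route is to show directly that $x^\T\grad\fcu(x)\le 0$ holds for \emph{all} $x$ with $\|x\|\le\|\scu\|$ satisfying $b^{(i)}x^{(i)}\le 0$ for all $i$ — expanding, $x^\T\grad\fcu(x) = x^\T A x + b^\T x + \rho\|x\|^3 = \sum_i \lambda^{(i)}[x^{(i)}]^2 + \sum_i b^{(i)}x^{(i)} + \rho\|x\|^3$, and one bounds this using $A_\star\succeq 0$ (so $\sum_i\lambda^{(i)}[x^{(i)}]^2 \ge -\rho\|\scu\|\,\|x\|^2$) together with Lemma-type control of the cross term. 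This is the step I expect to be the main obstacle: closing the induction requires simultaneously tracking the sign of $x^\T\grad\fcu(x)$, the sign of each $b^{(i)}x^{(i)}$, and the norm bound, and getting the algebra to line up with the factor-$4$ in the step size.

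\textbf{Step 2: smoothness of $\fcu$ on the ball $\{\|x\|\le\|\scu\|\}$.} Here I would compute $\grad\fcu(x) = Ax + b + \rho\|x\|x$ and estimate the Lipschitz constant of $x\mapsto Ax$ (which is $\beta = \|A\|$) plus that of $x\mapsto \rho\|x\|x$ on the ball of radius $\|\scu\|$. For the latter, either differentiate — $\grad^2(\frac{\rho}{3}\|x\|^3) = \rho\|x\|I + \rho xx^\T/\|x\|$, whose operator norm is $2\rho\|x\|\le 2\rho\|\scu\|$ — or argue directly via $\|\,\|x\|x - \|y\|y\,\| \le 2\max\{\|x\|,\|y\|\}\,\|x-y\|$. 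Adding gives the claimed $(\beta + 2\rho\|\scu\|)$-smoothness. This step is routine.

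Putting Steps 1 and 2 together yields exactly the three assertions of Lemma~\ref{lemma:monotone}. The only subtlety worth flagging in the writeup is that the induction in Step 1 should carry the auxiliary hypothesis $b^{(i)}x_t^{(i)}\le 0$ for all $i$ (consistent with Proposition~\ref{proposition:stationary-eigen} and Lemma~\ref{lemma:signs} referenced in Figure~\ref{fig:simple-gradients}), since this is what makes the cross term in $x^\T\grad\fcu(x)$ have a usable sign; without it the bound does not close.
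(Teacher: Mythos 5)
Your Step 2 (smoothness via $\opnorm{\grad^2 \fcu(x)} \le \beta + 2\rho\norm{x}$) and the opening moves of Step 1 (the identity $\norm{x_{t+1}}^2 = \norm{x_t}^2 - 2\eta x_t^T\grad\fcu(x_t) + \eta^2\norm{\grad\fcu(x_t)}^2$, and the base case via $r\le\Rcauchy$) match the paper. But the heart of your induction—the claim you call the ``cleanest route''—is false, and the secondary claim about $\norm{x_{t+1}}\le\norm{\scu}$ via a coordinatewise convex combination does not close either.

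Concretely, you assert that $x^T\grad\fcu(x)\le 0$ for every $x$ satisfying $\norm{x}\le\norm{\scu}$ and $b^{(i)}x^{(i)}\le 0$ for all $i$. This is not true even in the convex case. Take $d=2$, $A=\mathrm{diag}(0,1)$, $b=(-1,-1)$, $\rho=1$; solving the stationarity equation $(A+\rho\norm{\scu}I)\scu=-b$ gives $\norm{\scu}\approx 1.06$. The point $x=(0,1)$ satisfies $\norm{x}=1<\norm{\scu}$ and $b^{(1)}x^{(1)}=0$, $b^{(2)}x^{(2)}=-1\le0$, yet
\begin{equation*}
  x^T\grad\fcu(x) = x^T A x + b^T x + \rho\norm{x}^3 = 1 - 1 + 1 = 1 > 0.
\end{equation*}
Indeed, the decomposition you write gives $x^T\grad\fcu(x) = x^T\As x - \rho\norm{x}^2(\norm{\scu}-\norm{x}) + b^T x$; the last two terms are $\le 0$, but the first is $\ge 0$, so the sign is indeterminate. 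The dynamical constraint $x_t^T\grad\fcu(x_t)\le 0$ is a genuinely trajectory-dependent property of the gradient-descent iterates, not a geometric property of the region $\{\norm{x}\le\norm{\scu},\ b^{(i)}x^{(i)}\le 0\}$. The paper establishes it by a Taylor-style one-step comparison of $x_t^T\grad\fcu(x_t)$ to $x_{t-1}^T\grad\fcu(x_{t-1})$, whose dominant term $x_{t-1}^T A\grad\fcu(x_{t-1})$ is controlled by a delicate ordering lemma (Lemma~\ref{lem:ordering} / Lemma~\ref{lemma:x-transpose-A-nice}) tracking the relative magnitudes of $x_t^{(i)}/(-\eta b^{(i)})$ across eigencoordinates; the factor $4$ in Assumption~\ref{assu:step-size} is precisely what makes the resulting one-step bound close. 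Your proposal contains no analogue of this step. Similarly, the coordinate recursion $x_{t+1}^{(i)} = (1-\eta\lambda^{(i)}-\eta\rho\norm{x_t})x_t^{(i)} + \eta(\lambda^{(i)}+\rho\norm{\scu})\scu^{(i)}$ is not a convex combination: the coefficients sum to $1+\eta\rho(\norm{\scu}-\norm{x_t})\ge 1$, so ``cannot overshoot'' does not follow. The paper instead first derives the a priori bound $\norm{x_t}\le\Rupper$ from $x_t^T\grad\fcu(x_t)\le 0$, and then obtains $\norm{x_t}\le\norm{\scu}$ only at the end, from monotonicity combined with the asymptotic convergence of $x_t$ to a stationary point (all of which have norm at most $\norm{\scu}$ by Proposition~\ref{prop:characterization}).
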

\noindent
This lemma (proved in Appendix~\ref{app:prel}) is the key to our analysis
of the cubic-regularized problem.
The iterate structure is convenient for both problems, as the
next lemma shows that 
$x_t$ and $b$ have opposite signs at all coordinates
in the eigenbasis of $A$.
\begin{lemma}
  \label{lemma:signs}
  Let Assumptions~\ref{assu:init} and~\ref{assu:step-size} hold.
  Let the iterates $x_t$ be generated by the gradient descent
  iteration~\eqref{eqn:pgd} for either problem~\eqref{eqn:problem-tr}
  or~\eqref{eqn:problem-cubic}, and let $x\subopt$ be a solution
  to the given problem.
  Then for all $t\geq0$ and $i\in [d]$,
  \begin{equation*}
    b^{(i)} x\subopt^{(i)}\leq0, ~~
    x_{t}^{(i)}b^{(i)}\leq0, ~~
    \text{and} ~ x_{t}^{(i)} x\subopt^{(i)} \geq 0.
  \end{equation*}
  Consequently, $x\subopt^T b \le 0$, and
  $x_t^{\T}b \le 0$ and $x_{t}^{\T}x\subopt\geq0$ for all $t$.
\end{lemma}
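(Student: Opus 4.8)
The plan is to prove the three coordinatewise inequalities by a single induction on $t$, working throughout in the eigenbasis of $A$ as the paper's convention~\eqref{eq:superscript-convention} sets up. The claim $b^{(i)}x\subopt^{(i)}\le 0$ is not inductive: it follows directly from the stationarity condition in Proposition~\ref{prop:characterization}, which gives $(A+\lambda I)x\subopt + b = 0$ with $\lambda \ge \hinge{-\lmin} \ge -\lambda\supind{i}(A)$ for every $i$, so $x\subopt^{(i)} = -b^{(i)}/(\lambda\supind{i}(A)+\lambda)$ and the denominator is nonnegative; hence $b^{(i)}x\subopt^{(i)} = -[b^{(i)}]^2/(\lambda\supind{i}(A)+\lambda)\le 0$. (One must handle the degenerate case $\lambda\supind{i}(A)+\lambda = 0$, which forces $b^{(i)}=0$ and the inequality holds trivially.) The remaining two inequalities, $x_t^{(i)}b^{(i)}\le 0$ and $x_t^{(i)}x\subopt^{(i)}\ge 0$, are what the induction handles; note the second is implied by the first together with $b^{(i)}x\subopt^{(i)}\le 0$ \emph{unless} $b^{(i)}=0$, so I will in fact track $x_t^{(i)}b^{(i)}\le 0$ as the main inductive invariant and separately argue that $x_t^{(i)}=0$ whenever $b^{(i)}=0$ (which is immediate: if $b^{(i)}=0$ then the $i$th coordinate of the iteration never gets excited, as shown below).

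For the inductive step I would write the gradient step coordinatewise. For the cubic problem~\eqref{eqn:problem-cubic}, iteration~\eqref{eq:grad-iter} reads $x_{t+1}^{(i)} = (1 - \eta\lambda\supind{i}(A) - \rho\eta\norm{x_t})x_t^{(i)} - \eta b^{(i)}$. Under Assumption~\ref{assu:step-size} and the bound $\norm{x_t}\le\norm{\scu}\le\Rupper$ from Lemma~\ref{lemma:monotone}, the multiplier $c_t^{(i)} := 1 - \eta\lambda\supind{i}(A) - \rho\eta\norm{x_t}$ lies in $[0,1]$ — this is where the factor-of-$4$ in the step-size restriction is used (to absorb $|\lambda\supind{i}(A)|\le\beta$, the $\rho\Rupper$ term, and still keep $c_t^{(i)}\ge 0$). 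So $x_{t+1}^{(i)}$ is a nonnegative combination of $x_t^{(i)}$ and $-\eta b^{(i)}$; if $x_t^{(i)}b^{(i)}\le 0$ then both $c_t^{(i)}x_t^{(i)}$ and $-\eta b^{(i)}$ have the sign of $-b^{(i)}$ (or are zero), so $x_{t+1}^{(i)}b^{(i)}\le 0$, closing the induction. The base case $t=0$ uses Assumption~\ref{assu:init}: $x_0 = -r b/\norm{b}$, so $x_0^{(i)}b^{(i)} = -r[b^{(i)}]^2/\norm{b}\le 0$. For the trust-region problem~\eqref{eqn:problem-tr}, iteration~\eqref{eq:grad-iter-tr} is $x_{t+1} = \Pi_R\big((I-\eta A)x_t - \eta b\big)$; the pre-projection vector has $i$th coordinate $(1-\eta\lambda\supind{i}(A))x_t^{(i)} - \eta b^{(i)}$, and $0\le 1-\eta\lambda\supind{i}(A)\le 2$ under Assumption~\ref{assu:step-size} — wait, one needs $1-\eta\lambda\supind{i}(A)\ge 0$, which holds since $\eta\le 1/\beta$ and $\lambda\supind{i}(A)\le\beta$; the upper bound on this multiplier is not needed for the sign argument. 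So the pre-projection coordinate again has the sign of $-b^{(i)}$ (when $x_t^{(i)}b^{(i)}\le 0$), and since $\Pi_R$ is radial scaling by a factor in $[0,1]$ it preserves every coordinate's sign. That closes the induction in the trust-region case.

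The main obstacle — such as it is — is bookkeeping rather than conceptual: verifying that $c_t^{(i)}\in[0,1]$ (resp.\ $1-\eta\lambda\supind{i}(A)\ge 0$) holds uniformly in $i$ and $t$, which requires invoking Lemma~\ref{lemma:monotone} to control $\norm{x_t}$ in the cubic case, and being careful that the radial projection $\Pi_R(v) = \min\{1, R/\norm{v}\}\,v$ really does act as a nonnegative scalar multiple on each coordinate. The last sentence of the lemma — $x\subopt^T b\le 0$, $x_t^T b\le 0$, $x_t^T x\subopt\ge 0$ — follows immediately by summing the coordinatewise inequalities over $i\in[d]$, since $x^T y = \sum_i x^{(i)}y^{(i)}$ in the orthonormal eigenbasis. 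I would also remark that the argument implicitly uses that for the cubic problem a solution $\scu$ exists and Proposition~\ref{prop:characterization} applies; for the trust-region problem existence is automatic by compactness.
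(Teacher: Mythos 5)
Your proposal follows the paper's proof nearly verbatim: derive $b^{(i)}x\subopt^{(i)}\le 0$ from the stationarity and second-order conditions of Proposition~\ref{prop:characterization}, induct on $t$ coordinatewise using nonnegativity of the iteration multiplier (base case from Assumption~\ref{assu:init}), and combine the two to get $x_t^{(i)}x\subopt^{(i)}\ge 0$; your explicit handling of the $b^{(i)}=0$ degeneracy (showing the $i$th coordinate stays zero) is in fact a touch more careful than the paper's ``remaining claims are immediate.'' One minor inaccuracy that does not affect the argument: the multiplier $c_t^{(i)}=1-\eta\lambda\supind{i}(A)-\rho\eta\norm{x_t}$ need not lie in $[0,1]$ (it can exceed $1$ when $\lambda\supind{i}(A)<0$ and $\rho\norm{x_t}$ is small), but you only actually use $c_t^{(i)}\ge 0$, which Assumption~\ref{assu:step-size} together with the bound $\norm{x_t}\le\Rupper$ from Lemma~\ref{lemma:monotone} does guarantee.
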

\begin{proof}
  We first show that $b\supind{i} x\subopt\supind{i} \le 0$ for both
  problems. Letting $\ltr \ge \hinge{-\lmin}$ be the dual
  parameter~\eqref{eqn:trust-region-char}, we make the context-dependent
  definitions $\As = A + \ltr I$ or $\As = A + \rho \norm{\scu}
  I$. By Proposition~\ref{prop:characterization}, we have $\As x\subopt =
  -b$ and $\As \succeq 0$. Recalling the
  eigenbasis notation~\eqref{eq:superscript-convention}, we evidently 
  have $\lambda\supind{i}(\As)
  x\subopt\supind{i} = -b\supind{i}$, and 
  therefore $b\supind{i} x\subopt\supind{i} = -\lambda\supind{i}(\As) 
  [x\subopt\supind{i}]^2 \le 0$.

  Now, we consider the iterates of gradient descent. The initialization
  in Assumption~\ref{assu:init} guarantees $x_0\supind{i} b\supind{i}
  \le 0$ for either method, forming the base case of our induction. For
  the trust-region
  problem, writing the iteration~\eqref{eq:grad-iter-tr} in the eigenbasis 
  gives
  \begin{equation*}
  x_{t+1}\supind{i} b\supind{i} = \alpha_t \left[ 
  (1-\eta \lambda\supind{i})x_{t+1}\supind{i} b\supind{i} - \eta 
  [b\supind{i}]^2 
  \right], ~~~\alpha_t = \min\left\{1,\tfrac{R}{\norm{(I-\eta 
  A)x_t 
  -\eta b}}\right\}.%
  \end{equation*}
  As $1-\eta \lambda\supind{i} \ge 0$ by 
  Assumption~\ref{assu:step-size} and 
  $\alpha_t > 0$, we have that $x_t\supind{i} b\supind{i} \le 0$ implies 
  $x_{t+1}\supind{i} b\supind{i} \le 0$, completing the induction. Writing
  the cubic-regularized iteration~\eqref{eq:grad-iter}
  similarly gives
  \begin{equation*}
    x_{t+1}\supind{i} b\supind{i} =
    \left(1-\eta\lambda^{(i)}(A)-\eta\rho\| x_t\| \right)
    x_{t}^{(i)}b\supind{i}-[b\supind{i}]^2.
  \end{equation*}
  Assumption~\ref{assu:step-size} and Lemma \ref{lemma:monotone}
  imply $1-\eta\lambda^{(i)}(A)-\eta\rho\| x_{t-1}\|
  \ge 1 - \eta(\beta + \rho \Rupper) > 0$ for all $t, i$.
  Therefore, $x_{t+1}^{(i)}b^{(i)} \leq 0$ by induction.

  The remaining claims of the lemma are immediate from the preceding.
\end{proof}

\newcommand{\proxstep}{\mathsf{T}_\eta}

Lemmas~\ref{lemma:monotone}, \ref{lemma:signs}, and
Proposition~\ref{proposition:stationary-eigen} lead to the
following guarantee.
\begin{proposition}
  \label{prop:converge}
  Let Assumptions~\ref{assu:init} and~\ref{assu:step-size} hold, and assume
  that $b\supind{1} \neq 0$. Let $x_t$ follow the gradient
  iteration~\eqref{eqn:pgd} for either problem~\eqref{eqn:problem-tr}
  or~\eqref{eqn:problem-cubic}, and $x\subopt$ solve the corresponding
  problem.  Then $x_t \to x\subopt$ and the objective is monotone 
  decreasing.
\end{proposition}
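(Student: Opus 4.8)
The plan is to combine three facts. First, a \emph{sufficient decrease} estimate: along either iteration $f(x_{t+1}) \le f(x_t) - c\,\norm{x_{t+1}-x_t}^2$ for some constant $c>0$ depending on the problem; this makes the objective monotone and, since $f$ is bounded below on the relevant domain, forces $\sum_t \norm{x_{t+1}-x_t}^2 < \infty$ and hence $\norm{x_{t+1}-x_t}\to 0$. Second, the iterates lie in a compact ball, so a subsequence converges; using $\norm{x_{t+1}-x_t}\to 0$ and continuity of the (projected) gradient step, every subsequential limit is a fixed point of the iteration, equivalently a first-order stationary (KKT) point of the problem. Third, Lemma~\ref{lemma:signs} gives $b\supind{1}x_t\supind{1}\le 0$ for all $t$, which passes to any limit point; since $b\supind{1}\neq 0$, Proposition~\ref{proposition:stationary-eigen} identifies $x\subopt$ as the \emph{unique} stationary point with $b\supind{1}x\supind{1}\le 0$, so every subsequential limit equals $x\subopt$. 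A bounded sequence all of whose convergent subsequences have the common limit $x\subopt$ converges to $x\subopt$.

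For the sufficient decrease I would use optimality of $x_{t+1}$ in the $\tfrac1\eta$-strongly convex model~\eqref{eqn:pgd}: comparing with $x=x_t$ yields $\grad f(x_t)^T(x_{t+1}-x_t) \le -\tfrac1\eta\norm{x_{t+1}-x_t}^2$. Combined with a second-order upper expansion of the objective, $f(x_{t+1}) \le f(x_t) + \grad f(x_t)^T(x_{t+1}-x_t) + \tfrac L2\norm{x_{t+1}-x_t}^2$, this gives $f(x_{t+1}) \le f(x_t) - \big(\tfrac1\eta - \tfrac L2\big)\norm{x_{t+1}-x_t}^2$. For problem~\eqref{eqn:problem-tr}, $f$ is globally $\beta$-smooth (the expansion is exact with $\tfrac L2 \le \tfrac\beta2$) and $\eta\le 1/\beta$, so the coefficient is at least $\beta/2$. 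For problem~\eqref{eqn:problem-cubic}, Lemma~\ref{lemma:monotone} keeps all iterates — and hence, by convexity of the ball, the segments $[x_t,x_{t+1}]$ — inside $\{\norm{x}\le\norm{\scu}\}$, on which $\fcu$ is $L$-smooth with $L = \beta + 2\rho\norm{\scu} \le 2(\beta+\rho\Rupper) \le \tfrac1{2\eta}$ by $\norm{\scu}\le\Rupper$ and Assumption~\ref{assu:step-size}, so the coefficient is at least $\tfrac{3}{4\eta}>0$. Boundedness below is immediate: $f(x_t)\ge f(x\subopt)$ since $x_t$ is feasible in the trust-region case, and since $x\subopt$ globally minimizes $\fcu$ in the cubic case.

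For the identification of limit points, let $T_\eta$ denote the map given by the right-hand side of~\eqref{eqn:pgd}, which is continuous (Euclidean projection onto a ball and $x\mapsto\rho\norm{x}x$ are continuous), with $x_{t+1}=T_\eta(x_t)$. The iterates lie in the compact ball of radius $R$ (trust region) or $\norm{\scu}$ (cubic, by Lemma~\ref{lemma:monotone}); if $x_{t_k}\to\bar x$ then $x_{t_k+1}=T_\eta(x_{t_k})\to T_\eta(\bar x)$, while also $x_{t_k+1}\to\bar x$ because $\norm{x_{t+1}-x_t}\to0$, so $\bar x=T_\eta(\bar x)$. A fixed point of $T_\eta$ satisfies first-order optimality of its problem — $\grad\fcu(\bar x)=0$, i.e.\ $(A+\rho\norm{\bar x}I)\bar x+b=0$, for~\eqref{eqn:problem-cubic}, and the KKT system $(A+\ltr I)\bar x+b=0$, $\ltr\ge0$, $\ltr(R-\norm{\bar x})=0$ for~\eqref{eqn:problem-tr} — hence it is one of the stationary points in Proposition~\ref{proposition:stationary-eigen}. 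Passing $b\supind{1}x_t\supind{1}\le0$ to the limit gives $b\supind{1}\bar x\supind{1}\le0$, and since $b\supind{1}\neq0$, Proposition~\ref{proposition:stationary-eigen} forces $\bar x=x\subopt$. Concluding $x_t\to x\subopt$ from uniqueness of the subsequential limit on a bounded sequence is then standard.

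The main obstacle is conceptual rather than computational: ruling out convergence to the suboptimal local minimizer or to any of the saddle points. This is exactly where the sign structure of the trajectory (Lemma~\ref{lemma:signs}) together with the fact that $x\subopt$ is the \emph{only} stationary point on the correct side of the hyperplane $b\supind{1}x\supind{1}=0$ (Proposition~\ref{proposition:stationary-eigen}) does the work. A minor technical point deserving care is that at the largest admissible step $\eta=1/\beta$ in the trust-region case the crude descent bound only yields monotonicity, not summable decrease, so one should invoke the sharper estimate coming from strong convexity of the model~\eqref{eqn:pgd}, as above.
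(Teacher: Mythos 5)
Your proposal is correct and follows essentially the same route as the paper's proof: establish a sufficient-decrease inequality from the prox-step optimality condition and local smoothness, deduce that iterate increments vanish and limit points are fixed points of the step map, and then use Lemma~\ref{lemma:signs} together with Proposition~\ref{proposition:stationary-eigen} to identify every limit point with $x\subopt$. The only cosmetic difference is that the paper phrases the argument through the gradient mapping $\gradmap_\eta$ rather than the displacement $\norm{x_{t+1}-x_t}=\eta\norm{\gradmap_\eta(x_t)}$; the decrease constant and the limit-point identification are otherwise identical, and you correctly flag (and handle via strong convexity of the prox model) the boundary case $\eta=1/\beta$ where the na\"ive descent lemma alone yields only monotonicity.
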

\begin{proof}
  We recall a few standard results~\cite[\S 2.2.3]{Nesterov04}. For a
  differentiable $f$, closed convex $X \subset \R^d$, and $x \in \R^d$,
  define $\proxstep(x) = \argmin_{y \in X} \{\grad f(x)^T (y - x) +
  \frac{1}{2\eta} \norm{y - x}^2\}$ and the gradient mapping
  $\gradmap_\eta(x) = \frac{1}{\eta} (x - \proxstep(x))$, where
  $\gradmap_\eta(x) = \grad f(x)$ if $X = \R^d$, so that gradient descent
  iterates $x_{t + 1} = \proxstep(x_t)$. The first-order optimality
  conditions for convex optimization give that $(\grad f(x) -
  \gradmap_\eta(x))^T(y - \proxstep(x)) \ge 0$ for all $y \in
  X$, and substituting $y = x$ in this inequality, for any $L$-smooth $f$ we
  obtain
  \begin{equation*}
    f(\proxstep(x)) 
    \le f(x) - \eta \left(1 - \frac{L \eta}{2}\right)
    \norm{\gradmap_\eta(x)}^2.
  \end{equation*}
  In the case of problem~\eqref{eqn:problem-tr}, we have $f(x) = \f(x)$ and
  $L = \beta = \opnorm{A}$, while for~\eqref{eqn:problem-cubic}, $f(x) =
  \f(x) + \frac{\rho}{3} \norm{x}^3$, which is $L = \beta + 2 \rho
  \Rupper$-smooth over the ball containing the iterates $x_t$ by
  Lemma~\ref{lemma:monotone}.  As $\eta \le \frac{1}{L}$ for
  either problem,  we have the decrease $f(x_{t
    + 1}) \le f(x_t) - \frac{\eta}{2} \norm{\gradmap_\eta(x_t)}^2$,
  and
  \begin{equation*}
    \frac{\eta}{2} \sum_{\tau=0}^{t-1} \|\gradmap_\eta(x_\tau)\|^2
    \le f(x_0)-f(x_t) \le f(x_0)-f(x\subopt).
  \end{equation*}

  Let $\hat{x}$ be a limit point of the sequence $x_t$, which must satisfy
  $b\supind{1} \hat{x}\supind{1} \le 0$ by Lemma~\ref{lemma:signs}. The
  continuity of $x \mapsto \gradmap_\eta(x)$ means that
  $\gradmap_\eta(\hat{x}) = 0$, and consequently $\hat{x}$ is stationary. 
  As $b\supind{1}\ne0$ and $\hat{x}$ is a stationary point with 
  $b\supind{1} \hat{x}\supind{1} \le 0$, we have that $\hat{x}=x\subopt$ 
  by Proposition~\ref{proposition:stationary-eigen}.
\end{proof}

\subsection{Convergence rate guarantees for the cubic-regularized problem}
\label{sec:non-asymptotic}
Proposition~\ref{prop:converge} guarantees that gradient descent 
converges for
 both problems~\eqref{eqn:problem-tr} and~\eqref{eqn:problem-cubic}
whenever $b\supind{1} \neq 0$.  We now present stronger non-asymptotic
guarantees for the cubic problem, deferring the
treatment when $b\supind{1} = 0$ (the so-called ``hard
case''~\cite{ConnGoTo00, CartisGoTo11}) to Section~\ref{sec:the-hard-case}.
(Recall our convention~\eqref{eq:superscript-convention}, that parenthesized
superscripts denote components in the eigenbasis of $A$, and the 
additional notation $\lmin = \lambda\supind{1}(A)$ and $\beta = 
\opnorm{A}$.)  We 
have the following convergence guarantee.

\begin{theorem}
  \label{theorem:gradient-descent-cubic}
  Let Assumptions \ref{assu:init} and \ref{assu:step-size} hold, $b\supind{1} 
  \neq 0$,
  and $\eps > 0$.
  Define
  \begin{equation*}
    \tgrow = 
    6\log\left(1+\frac{\hinge{-\lmin}^2}{4\rho|b\supind{1}|}\right)
    ~ \mbox{and} ~
    \tconv(\eps)=6\log\left(\frac{ (\beta + 2\rs)\norm{\scu}^2 
   }{\eps}\right).
  \end{equation*}
  Then the gradient descent iterates~\eqref{eq:grad-iter} satisfy $\fcu(x_t) 
  \le \fcu(\scu) + \eps$ for all
  \begin{equation*}
    t \geq \frac{\tgrow + \tconv\left(\eps\right )}{\eta} 
    \min\left\{\frac{1}{\rs + \lmin},
    \frac{10 \norm{\scu}^2}{\eps}
    \right\}.
  \end{equation*}
\end{theorem}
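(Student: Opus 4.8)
The plan is to carry out the two-phase argument outlined in the introduction. Write $h_t \defeq \fcu(x_t)-\fcu(\scu)\ge0$ and, flipping $v_1$ if necessary, assume $b\supind{1}<0$, so that Lemma~\ref{lemma:signs} gives $x_t\supind{1}\ge0$, and in fact $x_t\supind{1}>0$ for $t\ge1$. Since $b\supind{1}\ne0$, Proposition~\ref{proposition:stationary-eigen} forces $\mu\defeq\lmin+\rs>0$. I will lean on three things: Lemma~\ref{lemma:monotone} (the norms $\norm{x_t}$ are nondecreasing with $\norm{x_t}\le\norm{\scu}$, $x_t^\T\grad\fcu(x_t)\le0$, and $\fcu$ is $(\beta+2\rs)$-smooth on $\{\norm{x}\le\norm{\scu}\}$, whence $\eta\le\tfrac1{4(\beta+\rho\Rupper)}\le\tfrac1{\beta+2\rs}$); the descent bound $h_{t+1}\le h_t-\tfrac\eta2\norm{\grad\fcu(x_t)}^2$ proved inside Proposition~\ref{prop:converge}; and the identities $\grad\fcu(x)=\As(x-\scu)+\rho(\norm{x}-\norm{\scu})x$ and~\eqref{eq:fx-s-expression}. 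Set $\tau\defeq\tfrac1\eta\min\{\tfrac1\mu,\tfrac{10\norm{\scu}^2}\eps\}$, so the target is $t\ge(\tgrow+\tconv(\eps))\,\tau$.

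For the contraction phase, the engine is a Polyak--\L ojasiewicz-type inequality valid once $\norm{x_t}$ exceeds a fixed fraction of $\norm{\scu}$. Comparing $(x-\scu)^\T\grad\fcu(x)=(x-\scu)^\T\As(x-\scu)+\rho(\norm{x}-\norm{\scu})(\norm{x}^2-\scu^\T x)$ with~\eqref{eq:fx-s-expression}, and using $\scu^\T x\ge0$ (Lemma~\ref{lemma:signs}), $\norm{x}\le\norm{\scu}$, and $\As\succeq\mu I$, one obtains a bound of the form $\norm{\grad\fcu(x_t)}^2\ge c\,\max\{\mu,\ h_t/\norm{\scu}^2\}\,h_t$ for an absolute constant $c$ (the $\mu$ term from $h_t\ge\tfrac\mu2\norm{x_t-\scu}^2$, the $h_t/\norm{\scu}^2$ term from $\norm{x_t-\scu}\le2\norm{\scu}$). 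Substituting into the descent bound gives, while $h_t>\eps$, a geometric contraction $h_{t+1}\le(1-\tfrac1{4\tau})h_t$; since~\eqref{eq:fx-s-expression} bounds the gap upon entering this region by $\tfrac12(\beta+2\rs)\norm{x_t-\scu}^2\le2(\beta+2\rs)\norm{\scu}^2$, iterating reaches $h_t\le\eps$ after at most $\tconv(\eps)\,\tau$ further steps, the constant $6$ in $\tconv(\eps)$ soaking up the accumulated slack.

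For the growth phase: if $\lmin\ge0$ then $\fcu$ is convex and $\hinge{-\lmin}=0$, so $\tgrow=0$ and no escape is needed (one checks the contraction argument runs from $t=0$). So suppose $\lmin<0$. In the eigenbasis, iteration~\eqref{eq:grad-iter} reads $x_{t+1}\supind{1}=(1-\eta\lmin-\eta\rho\norm{x_t})\,x_t\supind{1}+\eta|b\supind{1}|$; while $\rho\norm{x_t}$ lies below the threshold $-\lmin$ the multiplier exceeds $1$, so with the forcing term $\eta|b\supind{1}|$ and the first step giving $x_1\supind{1}\ge\eta|b\supind{1}|$, the coordinate $x_t\supind{1}$ --- hence $\norm{x_t}\ge x_t\supind{1}$ --- grows at a geometric rate $\approx1+\eta(-\lmin)$. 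Bounding the number of steps for $\norm{x_t}$ to leave the saddle region (with a short bookkeeping step to enter the contraction region, and the case $h_t\le\eps$ occurring first), the escape count contributes the $\tgrow$ term: the argument of the logarithm, $(-\lmin)^2/(4\rho|b\supind{1}|)$, is (up to constants) the ratio of the escape threshold $-\lmin/\rho$ to the effective per-step gain $|b\supind{1}|/(-\lmin)$ produced by summing the geometric series. Adding the two phase counts yields the claim.

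The main obstacle is the growth phase and, especially, its interface with the contraction phase. One must carry the geometric lower bound on $x_t\supind{1}$ through its coupling with the other eigencoordinates (which also feed into $\norm{x_t}$, hence into the multiplier), and then choose the escape threshold and line up all constants so that the escape count plus the contraction count really do sum to $\tfrac{\tgrow+\tconv(\eps)}\eta\min\{\tfrac1\mu,\tfrac{10\norm{\scu}^2}\eps\}$ rather than something larger. This interface is also precisely where the logarithmic dependence on $|b\supind{1}|$ enters, and where the bound degenerates as $b\supind{1}\to0$ --- the hard case, treated separately in Section~\ref{sec:the-hard-case}.
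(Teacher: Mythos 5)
Your plan has two parts: a growth phase handled by the coordinate recursion $x_{t+1}\supind{1}=(1-\eta\lmin-\eta\rho\norm{x_t})x_t\supind{1}-\eta b\supind{1}$, and a contraction phase driven by a PL-type inequality $\norm{\grad\fcu(x_t)}^2\ge c\max\{\mu,h_t/\norm{\scu}^2\}h_t$ with $\mu=\rs+\lmin$. The growth phase matches Lemma~\ref{lem:growth}. The PL inequality for the contraction phase is a \emph{genuinely different} route from the paper, which instead proves a contraction in $\norm{x_t-\scu}^2$ (Lemma~\ref{lem:lin-converge}) and then uses smoothness. Your PL inequality can be made to hold: writing $u=x-\scu$, $s=\norm{\scu}-\norm{x}$, a short computation from \eqref{eq:fx-s-expression} gives
\begin{equation*}
u^T\grad\fcu(x)-h=\tfrac12 u^T(A+\rho\norm{x}I)u+\tfrac{\rho s^2}{6}(\norm{x}+2\norm{\scu}),
\end{equation*}
so once $\lmin+\rho\norm{x_t}\ge-\nu$ you get $u^T\grad\fcu\ge(1-\nu/\mu)h$ by $h\ge\tfrac{\mu}{2}\norm{u}^2$, and Cauchy--Schwarz with $\norm{u}\le2\norm{\scu}$ yields both branches of your max. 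That route buys a self-contained ``convex-style'' derivation of the linear rate from the descent lemma alone, without Lemma~\ref{lem:lin-converge}.

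The gap is the sublinear branch, $\min\{\cdot,10\norm{\scu}^2/\eps\}$. The PL inequality requires the escape threshold $\nu$ to satisfy $\nu<\mu$ (otherwise the term $-\tfrac{\nu}{2}\norm{u}^2$ can cancel $h$ when $u$ lies in the near-kernel of $\As$). But Lemma~\ref{lem:growth} gives escape time $\asymp\tfrac{1}{\eta\nu}\gtrsim\tfrac{1}{\eta\mu}$, so \emph{before the PL inequality becomes available} you have already spent $\Omega(1/(\eta\mu))$ iterations. When $\mu<\eps/(10\norm{\scu}^2)$ this already exceeds the claimed bound $\tfrac{\tgrow+\tconv}{\eta}\cdot\tfrac{10\norm{\scu}^2}{\eps}$, so the two-phase PL argument cannot deliver the theorem in that regime. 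Your remark that the ``geometric contraction $h_{t+1}\le(1-\tfrac{1}{4\tau})h_t$'' holds while $h_t>\eps$ is correct once you are in the basin, but it does not repair the escape bottleneck. The paper's sublinear proof (Section~\ref{sec:epsilon-convergence-part}, Lemmas~\ref{lem:proj-lin-converge} and~\ref{lemma:eventually-proj-lin-converge}) handles this by decoupling: it picks $\nu\asymp\eps/\norm{\scu}^2$ (so the escape time matches the target $O(\norm{\scu}^2/\eps)$ rather than $O(1/\mu)$), proves linear contraction only in the eigenspace $\Pi_\nu$ of eigenvalues $\ge\lmin+\nu$, and separately shows that the residual in the complementary near-flat eigenspace contributes at most $O(\norm{\scu}^2\nu)\asymp\eps$ to the objective. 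Without some version of this subspace split, you cannot close the sublinear case; this is the missing idea, not a constant-matching issue.
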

\noindent
\newcommand{\rthresh}{r_{\mathrm{thresh}}}%
Deferring the full proof of the theorem to
Appendix~\ref{sec:proof-gd-cubic}, we provide a brief sketch here. We 
first show that there is a basin of attraction where iterates with norm
above roughly $-\lmin/\rho$ contract towards the global solution:
\begin{equation}
  \label{eq:contraction}
  \norm{x_{t+1}-\scu}^2 \le 
  \left[1-\frac{\eta}{6}(\rs + \lmin)\right] \norm{x_{t}-\scu}^2
\end{equation}
for all $t$ satisfying
\begin{equation*}
  \norm{x_t}\ge \rthresh\defeq \frac{-\lmin-\frac{1}{3}(\rs+\lmin)}{\rho}.
\end{equation*}
As $\norm{x_t}$ is monotonic by Lemma~\ref{lemma:monotone}, the 
contraction~\eqref{eq:contraction} guarantees that once $\norm{x_{T_1}} 
\ge \rthresh$, then $\fcu(x_t) - \fcu(\scu) \le \eps$ for all $t \ge T_1 + 
\frac{\tconv(\eps)}{\eta} (\rs + \lmin)^{-1}$. 
It remains to establish that the iterates escape the ball of radius $\rthresh$ 
quickly, which is nontrivial only in the nonconvex setting where $\lmin < 
0$. To this end, we prove the iterate norm grows exponentially, showing 
that if $\norm{x_t} \le \rthresh$ then
\begin{equation*}
  \norm{x_{t+1}}\ge |x\supind{1}_{t+1}| \ge 
  \left[1+\frac{\eta}{6}(\rs + \lmin)\right]|x\supind{1}_{t}| + \eta 
  |b\supind{1}|.
\end{equation*}
Consequently, $\norm{x_{T_1}}\ge \rthresh$ holds for $T_1\ge 
\frac{\tgrow}{\eta}(\rs+\lmin)^{-1}$, establishing the linear convergence 
rate in Theorem~\ref{theorem:gradient-descent-cubic}: the total number of 
iterations to $\eps$-optimality is $O(\kappa_\eta\log\frac{1}{\eps})$, 
where $\kappa_\eta \defeq \frac{1}{\eta(\lmin+\rs)}$ has the same order 
as the problem condition number $\kappa = \frac{\lmax + \rs}{\lmin + 
\rs}$ when $\eta$ is the maximum step size 
Assumption~\ref{assu:step-size} allows and $\lmax\ge 0$.

Theorem~\ref{theorem:gradient-descent-cubic} also provides an 
 $O(\frac{1}{\eps}\log\frac{1}{\eps})$ sublinear convergence rate, which is 
stronger than the linear convergence result when $\lmin + \rs = O(\eps)$. 
To prove it, we argue that geometric contraction  to the optimum 
still occurs in the subspace of eigenvectors corresponding to eigenvalues 
greater than $\lmin + O(1)\eps/\norm{\scu}^2$.  In the 
complementary subspace (of eigenvalues close to $\lmin$), we argue that 
the objective $f$ is very smooth outside a ball of radius $\rthresh$, and 
consequently that errors in that subspace do not significantly affect the 
objective value.

\subsection{Numerical illustration}

\begin{figure}
  \begin{center}
    \siam{\includegraphics[
	width=0.9\columnwidth]{%
	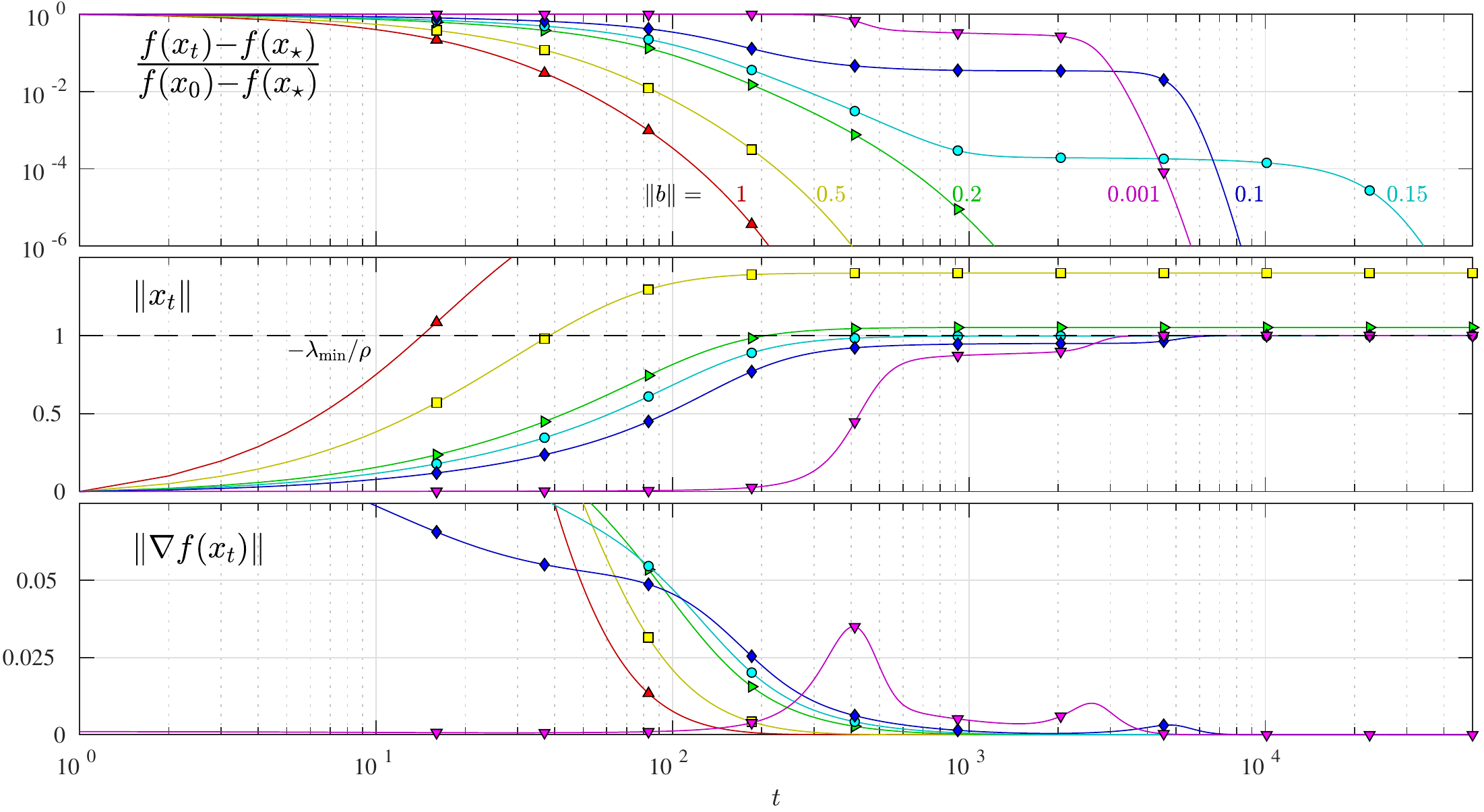}}
    \arxiv{\includegraphics[
	width=0.8\columnwidth]{%
	figures/figureTrajectoryDifferentB_gamma0,2_gap0,02.pdf}}
    \caption{\label{fig:trajB} Trajectories of gradient descent with
      $\lambda^{(1)}(A) = -0.2$ and
      $\lambda^{(2)}(A),...,\lambda^{(d)}(A)$ equally spaced between 
      $-0.18$
      and $\beta=1$, and different vectors $b$ proportional to $[0.01, 1, 
	1, 
	1, \ldots]$ in the eigenbasis of $A$.
      The rest of the parameters
      are $d=10^3$, $\eta = 0.1$, $\rho = 0.2$ and $x_0=0$.} 
  \end{center}
\end{figure}

We examine 
the behavior of gradient descent on a few problem instances, looking at 
convergence behavior as we vary the vector $b$ by scaling its norm 
$\norm{b}$. The selected norm values 
$\norm{b}\in\{1, 0.5, 0.2, 0.15, 0.1, 0.001\}$ correspond to condition 
numbers $(\beta+\rs)/(\lmin + \rs)\in \{7.6, 16, 120, 
5.5\cdot10^{3}, 2.9\cdot10^{4}, 3.8\cdot10^{6}\}$; the problem 
conditioning becomes worse as $\norm{b}$ decreases.   
Figure~\ref{fig:trajB} 
summarizes our results and describes the settings of the other 
parameters in the experiment.

The plots show two behaviors of gradient descent.  The problem is 
well-conditioned when $\|b\|\ge 0.2$, and in these cases gradient descent 
behaves as though the problem were strongly convex, with $x_t$ 
converging 
linearly to $\scu$. For $\|b\| \le 0.15$ the problem 
becomes ill-conditioned and gradient descent stalls around saddle points. 
Indeed, 
the third plot of Figure~\ref{fig:trajB} shows that for the ill-conditioned 
problems, we have $\norm{\nabla f(x_t)}$ increasing over some iterations, 
which does not occur in convex quadratic 
problems.  The length of the stall does not depend only on the condition 
number; for $\norm{b} = 10^{-3}$ the stall is shorter than for 
$\norm{b} \in \{0.1, 0.15\}$. Instead, it appears to depend on the norm of 
the 
saddle point causing it, which we observe from the value of 
$\norm{x_t}$ at the time of the stall; we see that the closer the norm is to 
$-\lmin/\rho$, the longer the stall takes. This is explained by observing 
that $\hess f(x) \succeq (\lmin+\rho\norm{x})I$, so every 
saddle point with norm  $\norm{x} \approx -\lmin/\rho$ must have only 
small negative curvature and is therefore harder to escape (see also 
Lemma~\ref{lem:growth} in the appendix). Fortunately, as we see in 
Fig.~\ref{fig:trajB}, saddle points with large norm have near-optimal 
objective 
value---this is the intuition behind our proof of the sub-linear 
convergence 
rates. 

\section{Krylov subspace methods}\label{sec:krylov}
We now turn to solutions to~\eqref{eqn:problem-tr} and
\eqref{eqn:problem-cubic} constrained to the Krylov
subspaces~\eqref{eqn:krylov-subspace} of order $t$.  Given orthogonal
$Q_t\in\R^{d\times t}$, $Q_t^T Q_t = I$, with columns in
$\Krylov$, the subspace-constrained problems~\eqref{eqn:krylov-iterates}
reduce to $t$-dimensional updates
$y_t = \argmin_{y:\norm{y}\le R}\{\f[Q_t^T A Q_t, Q_t^T b](y)
\}$ and
$y_t = \argmin_y \{\f[Q_t^T A Q_t,Q_t^T b, \rho](y)\}$ with
$x_t = Q_t y_t$.
The Lanczos process allows us to compute an orthogonal basis $Q_t$ such 
that $Q_t^T A Q_t$ is tridiagonal in time dominated by the cost of $t$ 
matrix-vector products. The tridiagonal structure allows fast linear system 
solution, making the reduced instance solvable in time roughly linear in 
$t$; see Appendix~\ref{sec:lanczos} for details.  Consequently, the 
computational cost of a Krylov subspace solution of order $t$ is roughly 
the same as that of $t$ gradient descent steps.

In this section, we develop bounds of the optimality gap of Krylov subspace 
solution. In contrast to our treatment of gradient descent, here we find it
more convenient to obtain guarantees for the trust-region problem, from 
which we obtain analogous guarantees for the cubic-regularized problem 
as an immediate corollary.

\subsection{Convergence guarantees for the trust region problem}
Let
\begin{equation*}
\itertr_t \in \argmin_{x\in\Krylov,~\norm{x}\le R} 
\f(x) = \half x^T A x + b^T x
\end{equation*}
denote the order $t$ Krylov subspace solution to the trust region 
problem~\eqref{eqn:problem-tr}. With the notation of 
Section~\ref{sec:prelim} and Proposition~\ref{prop:characterization} in 
particular, our main result on
convergence in trust region problems follows.
\begin{theorem}
  \label{theorem:tr-krylov}
  For every $t > 0$,
  \begin{equation*}
    \f(\itertr_t) - \f(\str) \le 36\left[\f(0) - \f(\str) \right]
    \exp\left( -4t\sqrt{\frac{\lmin + \ltr}{\lmax +\ltr}}
    \right),
  \end{equation*}
  and
  \begin{equation*}
    \f(\itertr_t) - \f(\str) \le \frac{(\lmax - 
      \lmin)\norm{\str}^2}{(t-\half)^2} 
    \left[ 
      4 + \frac{\I_{\{\lmin < 0\}}}{8}
      \log^2\left(\frac{4\norm{b}^2}{(b\supind{1})^2}\right)
      \right].
  \end{equation*}
\end{theorem}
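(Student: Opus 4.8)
The plan is to reduce the theorem to a polynomial-approximation problem over the spectrum of $A$ and then exhibit good reference elements of the Krylov space. Let $\ltr$ be the multiplier from Proposition~\ref{prop:characterization}, set $\Atr = A + \ltr I \succeq 0$ (eigenvalues in $[\lmin+\ltr,\lmax+\ltr]$), and write $\str = -\Atr^{-1}b$, which is legitimate unless we are in the hard case $\lmin+\ltr=0$ — but then $b\supind{1}=0$ if $\lmin<0$, so both right-hand sides are vacuous, and if $\lmin\ge0$ the statement reduces to a convex singular case. A short computation parallel to~\eqref{eq:fx-s-expression} gives
\begin{equation*}
  \f(x) - \f(\str) = \tfrac12 (x-\str)^T \Atr (x-\str) + \tfrac{\ltr}{2}\big(\norm{\str}^2 - \norm{x}^2\big),
\end{equation*}
and the complementarity $\ltr(R-\norm{\str})=0$ makes the last term nonnegative whenever $\norm{x}\le R$. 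Since $\itertr_t$ minimizes $\f$ over feasible $x\in\Krylov$, it suffices to exhibit, for each bound, one feasible $x=p(A)b$ with $\deg p\le t-1$ making the right-hand side small. Setting $r(\lambda):=1+(\lambda+\ltr)p(\lambda)$ — a degree-$\le t$ polynomial with $r(-\ltr)=1$ — the eigenbasis convention~\eqref{eq:superscript-convention} gives $(x-\str)\supind{i}=\frac{r(\lambda_i)}{\lambda_i+\ltr}b\supind{i}$, so
\begin{equation*}
  \tfrac12(x-\str)^T\Atr(x-\str) = \tfrac12\sum_i \frac{r(\lambda_i)^2 (b\supind{i})^2}{\lambda_i+\ltr} = \tfrac12\sum_i (\lambda_i+\ltr)\, r(\lambda_i)^2 (\str\supind{i})^2 ,
\end{equation*}
and likewise $\norm{\str}^2-\norm{x}^2=\sum_i(2r(\lambda_i)-r(\lambda_i)^2)(\str\supind{i})^2$. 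Thus everything reduces to choosing a degree-$\le t$ polynomial $r$ with $r(-\ltr)=1$ controlling these expressions; feasibility ($\norm{x}\le R$) is then restored, when $\ltr>0$, by rescaling $x$ to have norm $\norm{\str}=R$, which perturbs the effective $r$ by only a controlled amount.

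For the first (linear) bound I would take $r$ to be the degree-$t$ Chebyshev polynomial for the interval $[\lmin,\lmax]$, normalized so that $r(-\ltr)=1$; since $-\ltr\le\lmin$, the classical estimate gives $\max_{[\lmin,\lmax]}|r|\le 2\exp\!\big(-2t\sqrt{(\lmin+\ltr)/(\lmax+\ltr)}\big)$. Then $\tfrac12\sum_i\frac{r(\lambda_i)^2(b\supind{i})^2}{\lambda_i+\ltr}\le(\max|r|)^2\big[\f(0)-\f(\str)\big]$, using $\f(0)-\f(\str)=\tfrac12\sum_i\frac{(b\supind{i})^2}{\lambda_i+\ltr}+\tfrac{\ltr}{2}\norm{\str}^2\ge\tfrac12\sum_i\frac{(b\supind{i})^2}{\lambda_i+\ltr}$. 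Because $\norm{x}\in[(1-\max|r|)\norm{\str},(1+\max|r|)\norm{\str}]$, the rescaling to feasibility replaces $\max|r|$ by at most $3\max|r|$ once $\max|r|\le\tfrac13$ (and when $\max|r|>\tfrac13$ the claimed bound $36e^{-4t\sqrt{\cdot}}\ge1$ is trivial since $0\in\Krylov$ is feasible), giving $36[\f(0)-\f(\str)]\exp\!\big(-4t\sqrt{(\lmin+\ltr)/(\lmax+\ltr)}\big)$.

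For the second (sublinear) bound I would instead use $\tfrac12\sum_i(\lambda_i+\ltr)r(\lambda_i)^2(\str\supind{i})^2\le\tfrac12\big(\max_{[\lmin,\lmax]}(\lambda+\ltr)r(\lambda)^2\big)\norm{\str}^2$ and look for a degree-$\le t$ $r$ with $r(-\ltr)=1$ making $\max_{[\lmin,\lmax]}(\lambda+\ltr)r(\lambda)^2$ small. A Chebyshev-of-the-second-kind / accelerated-gradient polynomial for the interval — the same polynomial that certifies the $O(1/t^2)$ rate of Nesterov's method on the convex quadratic $x\mapsto\f(x)+\tfrac{\ltr}{2}\norm{x}^2$, and from which the extra reference elements mentioned in the introduction are built — controls $(\lambda+\ltr)r(\lambda)^2$ by $O\!\big((\lmax-\lmin)/(t-\tfrac12)^2\big)$ \emph{away from} $\lmin$. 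When $\lmin\ge0$ this already suffices (then $\lmin+\ltr\ge\lmin$, the $\vmin$-coordinate is benign, and the indicator vanishes), yielding the bare $4(\lmax-\lmin)\norm{\str}^2/(t-\tfrac12)^2$. When $\lmin<0$ one multiplies in a low-degree factor, of degree $\asymp\log(\norm{b}/|b\supind{1}|)$, that is $1$ at $-\ltr$, bounded by $1$ on $[\lmin,\lmax]$, and exponentially small at $\lmin$; combined with the bound $\lmin+\ltr\ge|b\supind{1}|/\norm{\str}$ (from $(\lmin+\ltr)\str\supind{1}=-b\supind{1}$, cf.\ Proposition~\ref{proposition:stationary-eigen}), the residual $\vmin$-contribution is bounded by the $\tfrac18\log^2(4\norm{b}^2/(b\supind{1})^2)$ term. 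As before, making the composite polynomial nonnegative on $[\lmin,\lmax]$ (at the cost of constants), or rescaling, disposes of the norm-difference term and of feasibility.

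I expect the sublinear bound to be the main obstacle: constructing the composite polynomial so that the bulk factor and the $\vmin$-suppressing factor together have degree $\le t$, while producing exactly the constants $4$ and $\tfrac18$ and the logarithm $\log^2(4\norm{b}^2/(b\supind{1})^2)$, and simultaneously keeping the reference element feasible and the norm-difference term under control. The rest — the identity, the Chebyshev estimate for the linear bound, the treatment of the hard case, and the dictionary between polynomials, Krylov elements, and the objective gap — is routine.
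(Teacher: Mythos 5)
Your argument for the linear bound is essentially the paper's: shift by $\ltr$, take the Chebyshev residual polynomial on $[\lmin+\ltr,\lmax+\ltr]$ (this is what the paper's Lemma~\ref{lem:lin-cheby} packages), rescale to land inside the ball, and absorb the norm-error term into $\f(0)-\f(\str)$. That part would go through.

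For the sublinear bound, however, you keep the shift at $\ltr$, and this is where your route breaks. The paper switches to $\Agam = A - \lmin I$, and the choice is not cosmetic. Two concrete consequences. (i) The ``bulk'' Nesterov/Chebyshev-of-second-kind rate for a convex quadratic gives a factor of the operator norm of the shifted matrix: $\opnorm{\Agam}=\lmax-\lmin$ for the paper's shift, versus $\opnorm{\Atr}=\lmax+\ltr$ for yours. Since $\ltr$ can be as large as $\norm{b}/R$ (independently of $\lmax-\lmin$), your claim that the accelerated-gradient polynomial controls $\max_{[\lmin,\lmax]}(\lambda+\ltr)r(\lambda)^2$ by $O((\lmax-\lmin)/(t-\tfrac12)^2)$ with $r(-\ltr)=1$ is not substantiated and in general gives $\lmax+\ltr$, not $\lmax-\lmin$; writing $(\lambda+\ltr)r^2 = (\lambda-\lmin)r^2 + (\lmin+\ltr)r^2$ shows the second piece needs $\max_{[\lmin,\lmax]}|r|$ itself to be $O(1/t)$, which your construction does not provide. (ii) The norm-error term is $\tfrac{\ltr}{2}(\norm{\str}^2-\norm{x}^2)$; for a reference $x$ with $\norm{x}<\norm{\str}$ this can be of order $\ltr\norm{\str}^2$, again potentially $\gg(\lmax-\lmin)\norm{\str}^2$, and rescaling $x$ up to the sphere can inflate the $\Atr$-convex error. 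The paper's trick for killing this term is to add a component $\alpha z_t$ along an \emph{approximate null vector of $\Agam$}: since $\Agam\vmin=0$, one can choose $\alpha$ to make $\norm{y_t+\alpha z_t}=\norm{\str}$ exactly (so the norm error vanishes) while the convex error increases by only $O(\alpha^2 z_t^T\Agam z_t)$. This trick has no analogue for $\Atr$, whose smallest eigenvalue $\lmin+\ltr$ need not be small. Finally, the paper's decomposition is \emph{additive in vector space} ($x_t=y_t+\alpha z_t$, both full-degree elements of $\Krylov$), not a \emph{product of polynomials}, precisely to avoid the degree-budget conflict you flag; with a product, the bulk factor has degree only $t-s$ where $s\sim\log(\norm{b}/|b\supind{1}|)$, which yields $1/(t-s)^2$ rather than the claimed $\log^2(\cdot)/(t-\tfrac12)^2$ and fails altogether for $t\le s$.

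In short: change the shift to $-\lmin$ for the sublinear bound, and replace the composite-polynomial idea by the additive construction $y_t+\alpha z_t$ where $y_t$ comes from the projected accelerated-gradient iterate (Lemma~\ref{lem:agd}) and $z_t$ from the Lanczos eigenvector guarantee (Lemma~\ref{lem:eigenvec-tight}), choosing $\alpha$ to equalize the norm and to have the sign making the cross-term $\alpha z_t^T\grad\f[\Agam,b](y_t)$ nonpositive. That is the content of the paper's Section~\ref{sec:upper-sub-proof}.
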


Theorem~\ref{theorem:tr-krylov} characterizes
linear and sublinear
convergence regimes.
Linear convergence occurs when $t \gtrsim \sqrt{\kappa}$, where $\kappa 
=
\frac{\lmax + \ltr}{\lmin + \ltr} \ge 1$ is the condition number for the
problem, and the error falls beneath $\eps$
in roughly $\sqrt{\kappa}\log{\frac{1}{\eps}}$ Lanczos
iterations. Sublinear convergence occurs when $t \lesssim \sqrt{\kappa}$, 
and
there the error decays polynomially and falls beneath $\eps$ in roughly
${1}/{\sqrt{\eps}}$ iterations. For worst-case problem instances
this characterization is tight to numerical constant
factors~\cite[Sec.~4]{CarmonDu18}.

The guarantees of Theorem~\ref{theorem:tr-krylov} closely resemble the
guarantees for the conjugate gradient
method~\cite{TrefethenBa97}, including them as the special case $R = \infty$
and $\lmin \ge 0$. For convex problems, the radius constraint $\norm{x}\le
R$ always improves the conditioning of the problem, as $\frac{\lmax}{\lmin}
\ge \frac{\lmax+\ltr}{\lmin+\ltr}$; the smaller $R$ is, the better
conditioned the problem becomes; see additional discussion in 
Sec.~\ref{sec:discussion}.
For nonconvex problems, the sublinear rate
features an additional logarithmic term that captures the role of the
eigenvector $\vmin$. The first rate of Theorem~\ref{theorem:tr-krylov}
is similar to
those of \citet[Thm.~4.11]{ZhangShLi17}, though with somewhat more explicit
dependence on $t$.

In the ``hard case,'' which corresponds to $b\supind{1} = 0$ and $\lmin +
\ltr = 0$ (cf.~\cite[Ch.~7]{ConnGoTo00}), both the bounds in
Theorem~\ref{theorem:tr-krylov} become vacuous, and indeed $\itertr_t$ may
not converge to the global minimizer in this case. However, as the sublinear
bound of Theorem~\ref{theorem:tr-krylov} depends only logarithmically on
$b\supind{1}$, it remains valid even extremely close to the hard case. In
Section~\ref{sec:the-hard-case} we describe simple randomization techniques
with convergence guarantees that are valid in the hard case as well.

For convenience of the reader, we provide a sketch of the
proof of Theorem~\ref{theorem:tr-krylov} here,
deferring the full proof to Appendix~\ref{sec:proof-krylov}.
Our analysis rests on two elementary observations. First
Krylov subspaces are invariant to shifts by scaled identity matrices, \ie 
$\mc{K}_t(A,b) = \mc{K}_t(A_\lambda, b)$ for any $A,b,t$ where 
$\lambda\in\R$, and 
\begin{equation*}
  A_\lambda \defeq A + \lambda I. 
\end{equation*}
Second, for every point $x$ and $\lambda\in\R$
\begin{align}
  \label{eq:tr-gamma-pivot-outline}
  \f(x) - \f(\str)  & = \f[A_{\lambda},b](x) - \f[A_{\lambda},b](\str) + 
  \frac{\lambda}{2}(\norm{\str}^2 - \norm{x}^2) 
\end{align}
Our strategy then is to choose $\lambda$ such that $A_\lambda \succeq 0$,
and then use known results to find $y_t \in \Krylov[t][A_\lambda,b] =
\Krylov[t][A,b]$ that rapidly reduces the ``convex error'' term
$\f[A_{\lambda},b](y_t) - \f[A_{\lambda},b](\str)$. 
We then adjust $y_t$ to obtain a feasible 
point $x_t$ such that the ``norm error'' term 
$\frac{\lambda}{2}(\norm{\str}^2 - \norm{x_t}^2)$ is small.
To establish linear convergence, we take $\lambda=\ltr$ and adjust the 
norm of $y_t$ by taking $x_t=(1-\alpha)y_t$ for some small $\alpha$ that 
guarantees $x_t$ is feasible and that the ``norm error'' term is small. To 
establish sublinear convergence we set $\lambda=-\lmin$ and take $x_t = 
y_t + \alpha \cdot z_t$, where $z_t$ is an approximation for $\vmin$ 
within $\Krylov$, and $\alpha$ is chosen to make 
$\norm{x_t}=\norm{\str}$. This means the ``norm error'' vanishes, 
while the ``convex error'' cannot increase too much, as $A_{-\lmin}z_t 
\approx A_{-\lmin}\vmin=0$.

\subsection{Convergence guarantees for the cubic-regularized problem}
\label{sec:krylov-cubic}
Comparing the optimality characterization~\eqref{eqn:cubic-optimality} for
the cubic problem~\eqref{eqn:problem-cubic} to that for the trust-region
problem~\eqref{eqn:trust-region-char}, we see that any instance $(A,b,\rho)$
of cubic regularization has an equivalent trust-region instance $(A,b,R)$,
with $R=\norm{\scu}$ and identical global minimizers. This trust-region
instance has optimal Lagrange multiplier $\ltr = \rs$, and at any
trust-region feasible $x$ (satisfying $\norm{x}\le
R=\norm{\scu}=\norm{\str}$), the cubic-regularization optimality gap is
smaller than its trust-region equivalent,
\begin{equation*}
  \fcu(x)-\fcu(\scu) = \f(x) - \f(\str) 
  + \frac{\rho}{3}\big( \norm{x}^3 - \norms{\str}^3 \big)
  \le \f(x) - \f(\str).
\end{equation*}
Letting $\itercu_t$ denote the minimizer of $\fcu$ in $\Krylov$ and letting
$\itertr_t$ denote the Krylov subspace solution of the equivalent
trust-region problem, we conclude that
\begin{equation*}
  \fcu(\itercu_t) - \fcu(\scu) \le \fcu(\itertr_t) - \fcu(\scu) \le
  \f(\itertr_t) - \f(\str);
\end{equation*}
cubic regularization Krylov subspace solutions always have a 
\emph{smaller optimality gap} than their 
trust-region equivalents. Theorem~\ref{theorem:tr-krylov} thus
gives the following result.
\begin{corollary}\label{cor:cu}
  Let $\fcu\opt = \fcu(\scu)$. For every $t>0$, 
  \begin{equation*}
    \fcu(\itercu_t) - \fcu\opt \le 36\left[\fcu(0) - \fcu\opt \right]
    \exp\left\{
    -4t\sqrt{\frac{\lmin + \rs}{\lmax +\rs}}
    \right\},
  \end{equation*}
  and
  \begin{equation*}
    \fcu(\itercu_t) - \fcu\opt \le \frac{(\lmax - 
      \lmin)\norm{\scu}^2}{(t-\half)^2} 
    \left[ 
      4 + \frac{\I_{\{\lmin < 0\}}}{8}
      \log^2\left(\frac{4\norm{b}^2}{(b\supind{1})^2}\right)
      \right].
  \end{equation*}
\end{corollary}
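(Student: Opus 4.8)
The plan is to obtain Corollary~\ref{cor:cu} directly from Theorem~\ref{theorem:tr-krylov} using the exact correspondence, recorded just before the statement, between the cubic instance $(A,b,\rho)$ and the trust-region instance $(A,b,R)$ with $R=\norm{\scu}$: by Proposition~\ref{prop:characterization} the two problems share the global minimizer, so $\str=\scu$, and the optimal trust-region multiplier is $\ltr=\rs$. First I would note that the Krylov subspace $\Krylov$ depends only on $(A,b)$ and is therefore common to the two problems; hence the norm-constrained Krylov solution $\itertr_t$ of the trust-region instance is a feasible point for the \emph{unconstrained} Krylov minimization that defines $\itercu_t$, and so $\fcu(\itercu_t)\le\fcu(\itertr_t)$. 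Since $\norm{\itertr_t}\le R=\norm{\scu}$, the identity $\fcu(x)-\fcu\opt=\f(x)-\f(\str)+\tfrac{\rho}{3}(\norm{x}^3-\norm{\scu}^3)$ gives $\fcu(\itertr_t)-\fcu\opt\le\f(\itertr_t)-\f(\str)$, and chaining the two bounds yields $\fcu(\itercu_t)-\fcu\opt\le\f(\itertr_t)-\f(\str)$.

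With this reduction I would invoke Theorem~\ref{theorem:tr-krylov} for the equivalent trust-region instance, substituting $\ltr=\rs$ and $\norm{\str}=\norm{\scu}$. The second (sublinear) bound of the corollary is then \emph{verbatim} the second bound of the theorem, since its right-hand side depends on the instance only through $\lmin$, $\lmax$, $\norm{\str}$, $\norm{b}$, and $b\supind{1}$, all unchanged. For the first (linear) bound the same substitution produces $\fcu(\itercu_t)-\fcu\opt\le 36[\f(0)-\f(\str)]\exp(-4t\sqrt{(\lmin+\rs)/(\lmax+\rs)})$, so the only remaining point is to replace the prefactor $\f(0)-\f(\str)$ by the smaller quantity $\fcu(0)-\fcu\opt$: using $\f(0)=\fcu(0)=0$ and $\f(\str)=\fcu\opt-\tfrac{\rho}{3}\norm{\scu}^3$ one has $\f(0)-\f(\str)=(\fcu(0)-\fcu\opt)+\tfrac{\rho}{3}\norm{\scu}^3\ge\fcu(0)-\fcu\opt$, so the black-box statement is (very slightly) weaker than what is claimed.

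Closing this prefactor gap is the one place the reduction does not finish the argument on its own, and it is also what I expect to be the main obstacle---though a bookkeeping one rather than a conceptual one. I would resolve it by looking inside the proof of Theorem~\ref{theorem:tr-krylov}: in the linear regime that proof fixes $\lambda=\ltr=\rs$, works with the positive semidefinite shifted objective $x\mapsto\f(x)+\tfrac{\rs}{2}\norm{x}^2$, and the exponentially decaying quantity it actually controls (before the feasibility adjustment $x_t=(1-\alpha)y_t$) is the ``convex error'' at the origin, $\tfrac{1}{2}(\str)^{T}(A+\rs I)\str$. Using $(A+\rs I)\str=-b$ and $\rs=\rho\norm{\scu}$, a one-line computation gives $\tfrac{1}{2}(\str)^{T}(A+\rs I)\str=(\fcu(0)-\fcu\opt)-\tfrac{\rs}{6}\norm{\scu}^2\le\fcu(0)-\fcu\opt$, so carrying this sharper base quantity through the same estimate---and observing that for the unconstrained cubic problem the feasibility adjustment is only easier, since there is no radius constraint---reproduces the stated constant $36$. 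Equivalently, and perhaps most transparently, one may re-run the entire linear-rate argument of Theorem~\ref{theorem:tr-krylov} with $\fcu$ in place of $\f$ throughout: the two objectives differ only by the term $\tfrac{\rho}{3}\norm{x}^3$, which is monotone in $\norm{x}$ and hence harmless along the contraction, and this route never introduces the extraneous $\tfrac{\rho}{3}\norm{\scu}^3$ in the first place.
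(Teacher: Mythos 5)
Your reduction to the equivalent trust-region instance and the observation that the sublinear bound transfers verbatim are exactly what the paper does, and your diagnosis is right that the only nontrivial point is tightening the linear-rate prefactor from $\f(0)-\f(\str)$ to $\fcu(0)-\fcu\opt$. The paper also resolves this by looking inside the proof of Theorem~\ref{theorem:tr-krylov}, invoking the intermediate bound~\eqref{eq:tr-lin-time-bound-stronger}
\[
\f(\itertr_t)-\f(\str)\le\bigl(18\,(\str)^T\Atr\str+4\ltr\norm{\str}^2\bigr)e^{-4t/\sqrt{\kappa}},
\]
and then checking directly (with $\str=\scu$, $\Atr=\As$, $\ltr=\rs$) that
\[
18\,(\scu)^T\As\scu+4\rho\norm{\scu}^3
\;\le\;
18\,(\scu)^T\As\scu+6\rho\norm{\scu}^3
\;=\;
36\Bigl[\tfrac12(\scu)^T\As\scu+\tfrac{\rho}{6}\norm{\scu}^3\Bigr]
\;=\;36\bigl[\fcu(0)-\fcu\opt\bigr].
\]
So your key algebraic observation, $\tfrac12(\scu)^T\As\scu=(\fcu(0)-\fcu\opt)-\tfrac{\rs}{6}\norm{\scu}^2\le\fcu(0)-\fcu\opt$, is exactly the right ingredient. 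Where your write-up is loose is in the final step: the exponentially controlled prefactor is not a multiple of $\tfrac12(\str)^T\As\str$ alone but also carries the norm-adjustment piece $4\ltr\norm{\str}^2=4\rho\norm{\scu}^3$, and the heuristic that ``for the unconstrained cubic problem the feasibility adjustment is only easier'' does not by itself dispose of it (the reduction still passes through the norm-constrained trust-region intermediate point). What actually absorbs it is the extra $\tfrac{\rho}{6}\norm{\scu}^3$ inside $\fcu(0)-\fcu\opt$, i.e.\ the comparison $4\rho\norm{\scu}^3\le 6\rho\norm{\scu}^3$ displayed above. With that one line made explicit, your argument coincides with the paper's; the alternative route you mention at the end (re-running the linear-rate argument with $\fcu$ in place of $\f$) would be a genuinely different derivation, but you do not carry it out and the paper does not take it.
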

\begin{proof}
  We look forward to use the
  bound~\eqref{eq:tr-lin-time-bound-stronger} in the proof of
  Theorem~\ref{theorem:tr-krylov} (Appendix~\ref{sec:proof-krylov})
  with the inequality $18 (\str)^T \Atr \str +
  4\ltr\norm{\str}^2 \le 36[ \half{\scu}^T A \scu + \frac{1}{6}\rho
    \norm{\scu}^3] = 36[\fcu(0)-\fcu(\scu)]$.
\end{proof}

\subsection{Numerical illustration}\label{sec:exp-rates}
To illustrate our convergence rate guarantees, for each of three controlled
condition numbers $\kappa = \frac{\lmax + \rs}{\lmin + \rs} \in \{10^2,
10^4, 10^6\}$, we generate 5,000 random cubic-regularization problems $f = \fcu$
in
dimension $d=10^{6}$
(see
Appendix~\ref{sec:exp-details} for more details). We solve these problems
with both gradient descent (with step size $\eta = \frac{1}{4}$) and the
Krylov subspace method.  Figure~\ref{fig:exp} summarizes the result, showing
the cumulative distribution (represented by shading) of suboptimality
$\frac{f(x_t) - f(\scu)}{f(0) - f(\scu)}$
versus
iteration number across the generated instances.

As the figure shows, about 20 Lanczos iterations suffice to solve even the 
worst-conditioned instances to about $10\%$ relative accuracy, and 100 
iterations give accuracy better than $1\%$. Moreover, for $t \gtrapprox 
\sqrt{\kappa}$, the approximation error decays exponentially with precisely 
the rate $4/\sqrt{\kappa}$ predicted by our analysis, for almost all the 
generated problems. For $t \ll \sqrt{\kappa}$, the error decays 
approximately as $t^{-2}$. 
Gradient descent converges more slowly, exhibiting linear 
convergence for low $\kappa$ and sublinear convergence with rate $1/t$ 
when $\kappa$ is large. This is consistent with our bounds 
from Section~\ref{sec:non-asymptotic}.

\begin{figure}
	\centering
	\includegraphics[width=0.99\columnwidth]{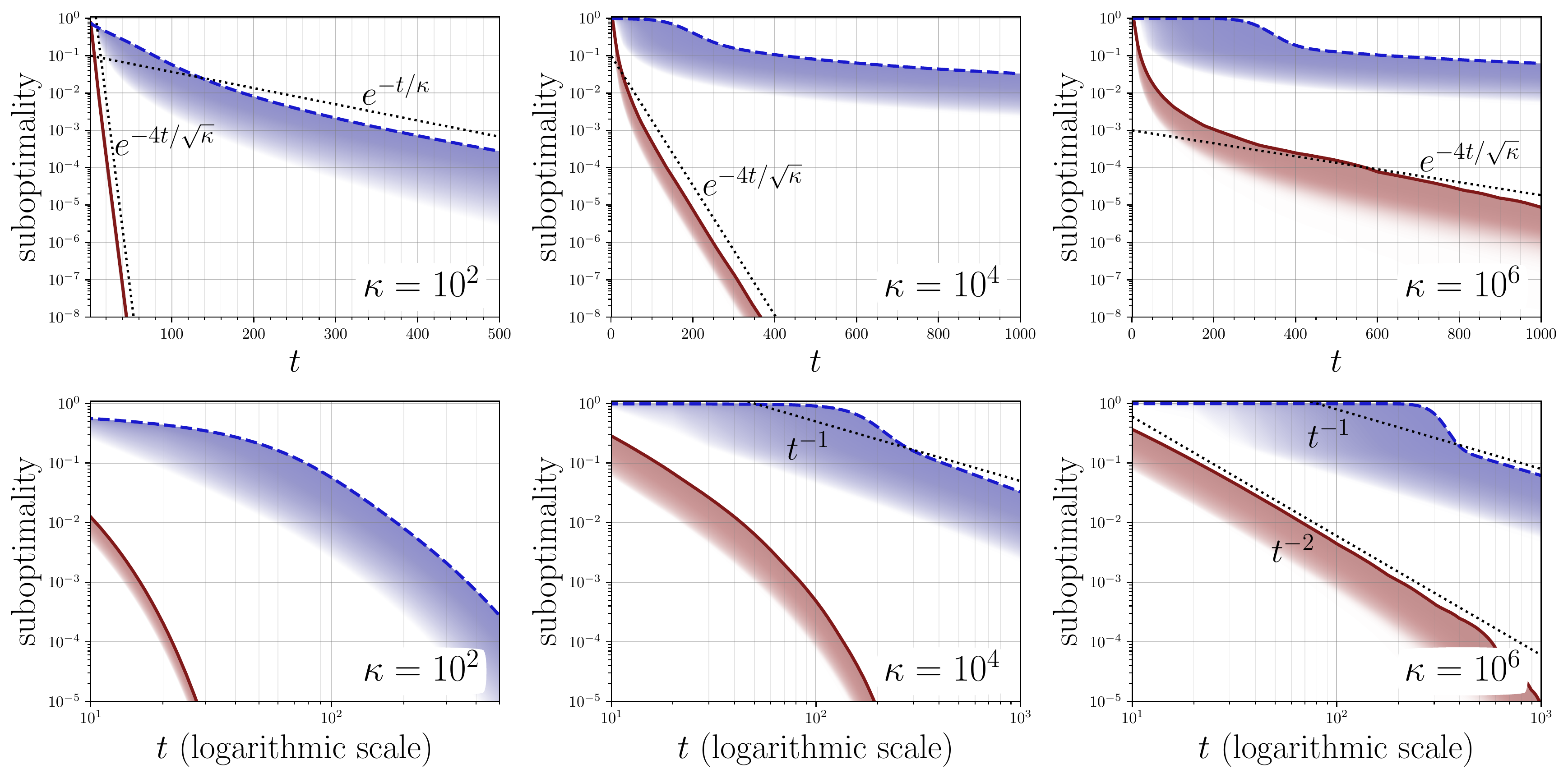}
	\caption{\label{fig:exp}%
		Optimality gap of 
		Krylov subspace solutions (red, solid line) and gradient descent (blue, 
		dashed line) on random 
		cubic-regularization problems, versus iteration count  
		$t$. %
		The shaded regions indicate the cumulative distribution of the 
		optimality gap as a function of $t$, and the bold lines show the 
		maximum value across all generated problems. 
		Columns correspond to different 
		condition numbers 
		$\kappa=({\lmax+\rho\norm{\scu}})/({\lmin+\rho\norm{\scu}})$, 
		and rows differ by scaling of the $t$ axis (linear at the top and 
		logarithmic at the bottom). 
	}
\end{figure}

\section{Randomizing away the hard case}
\label{sec:the-hard-case}

Both gradient descent and Krylov subspace methods may fail to converge to
the global solution of problems~\eqref{eqn:problem-tr}
and~\eqref{eqn:problem-cubic} in the ``hard
case''~\cite{ConnGoTo00,NocedalWr06}, that is, when $b\supind{1} = 
\vmin^T b = 0$.
This is
unavoidable, since in this case methods generate iterates in a subspace 
orthogonal to $\vmin$,
while $x\subopt\supind{1}$ may be non-zero.  Yet as with eigenvector
methods~\cite{KuczynskiWo92,GolubVa89}, simple randomization approaches
address the hard case with high probability, at the cost
of introducing a logarithmic dependence on $d$ to the error bounds. We
describe two approaches: one that perturbs the data $b$,
and one that expands the span of the iterates.

\subsection{Data perturbation}\label{sec:data-perturbation}
Our first approach is to perturb $b$ to a random vector $\tilde{b}$ very
near $b$, which guarantees that $\tilde{b}\supind{1} \neq 0$, while being
near enough $b$ that the corresponding perturbed solutions nearly solve the
initial problem.  We showcase this approach for gradient descent on the 
cubic regularized problem; analogous results for Krylov subspace methods 
for both trust region and cubic regularization are 
straightforward~\cite[Cor.~3]{CarmonDu18}.

\begin{corollary}
  \label{corr:gradient-pert}
  Let Assumptions~\ref{assu:init} and~\ref{assu:step-size} hold, let 
  $\eps,\delta > 0$, and let $\univar \sim \uniform(\sphere^{d-1})$.  Let
  $\tilde{x}_t$ be generated by the gradient descent
  iteration~\eqref{eq:grad-iter} for problem~\eqref{eqn:problem-cubic} with
  $\tilde{b} = b + \sigma \univar$ replacing $b$, where
  \begin{equation*}
    \sigma = \frac{\rho\eps}{\beta+2\rs}\cdot 
    \frac{\sigbar}{12}~\mbox{with}~\sigbar\le1.
  \end{equation*}
  Then with probability at least $1 - \delta$, we have 
  $\fcu(\tilde{x}_t) \le \fcu(\scu) + (1+\sigbar)\eps$ 
  for
  \begin{equation*}
    t \ge
    \frac{6 \tiltgrow(\delta,\sigbar) + 14 \tiltconv\left(\eps\right 
      )}{(1+\sigbar)^{-1}\eta} 
    \min\left\{ 
    \frac{1}{\lmin + \rs},  \frac{10 \norm{\scu}^2}{\eps}
    \right\}.
  \end{equation*}
  where
    $\tiltgrow(\delta,\sigbar)
    \defeq 
    \log\Big(1+\frac{3\I_{\{\lmin < 0\}}\sqrt{d}}{\sigbar\delta}\Big)$ 
    and 
    $\tiltconv(\eps) \defeq
    \log\left(\frac{ (\beta + 2\rs)\norm{\scu}^2 
    }{\eps}\right)$.
\end{corollary}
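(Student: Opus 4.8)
The plan is to reduce to the non-hard-case guarantee of Theorem~\ref{theorem:gradient-descent-cubic}, applied to the \emph{perturbed} instance $(A,\tilde b,\rho)$ with $\tilde b = b + \sigma\univar$, and then translate the resulting bound back to the original instance. Write $\tilde f$ for the cubic objective with data $(A,\tilde b,\rho)$ and $\soltilde$ for its minimizer; since $\tilde f(x) = \fcu(x) + \sigma\univar^T x$, one has the pointwise estimate $|\tilde f(x) - \fcu(x)| \le \sigma\norm{x}$, used throughout.

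The only randomness is in the first eigencoordinate $\tilde b\supind{1} = b\supind{1} + \sigma\univar\supind{1}$, and the key probabilistic input is an anti-concentration bound for it. For $\univar\sim\uniform(\sphere^{d-1})$, the coordinate $\univar\supind{1} = \vmin^T\univar$ has a symmetric unimodal density proportional to $(1-s^2)^{(d-3)/2}$ on $[-1,1]$, with value at $0$ at most $\sqrt{d/(2\pi)}$, so $\P(|\univar\supind{1}|\le s) \le s\sqrt{2d/\pi}$; because a symmetric unimodal density puts the most mass near its center, $\P(|\tilde b\supind{1}|\le\sigma s) \le \P(|\univar\supind{1}|\le s)$ for every $s$. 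Taking $s = \delta\sqrt{\pi/(2d)}$ shows that with probability at least $1-\delta$ we have $|\tilde b\supind{1}| \ge \sigma\delta\sqrt{\pi/(2d)} > 0$. On this event $\tilde b\supind{1}\ne0$, so Theorem~\ref{theorem:gradient-descent-cubic} and Proposition~\ref{proposition:stationary-eigen} apply to $(A,\tilde b,\rho)$; one also checks that Assumptions~\ref{assu:init} and~\ref{assu:step-size} persist for the perturbed instance (e.g.\ for $x_0=0$, and using $\norm{\tilde b}\le\norm{b}+\sigma$ together with the smallness of $\sigma$ to keep $\Rupper$ and the smoothness constant essentially unchanged).

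Conditioning on the good event, Theorem~\ref{theorem:gradient-descent-cubic} gives $\tilde f(\tilde x_t)\le\tilde f(\soltilde)+\eps$ once $t$ exceeds the corresponding perturbed iteration count. Combining $\fcu(\tilde x_t)\le\tilde f(\tilde x_t)+\sigma\norm{\tilde x_t}$, $\tilde f(\soltilde)\le\tilde f(\scu)\le\fcu(\scu)+\sigma\norm{\scu}$, and $\norm{\tilde x_t}\le\norm{\soltilde}\le\Rupper(\tilde b)$ (Lemma~\ref{lemma:monotone} for the perturbed iteration), the accuracy claim reduces to $\sigma(\Rupper(\tilde b)+\norm{\scu})\le\sigbar\eps$, which follows from the chosen $\sigma=\frac{\rho\eps}{\beta+2\rs}\cdot\frac{\sigbar}{12}$ and the elementary bound $\rho\Rupper\le2\beta+\rs$ (immediate from~\eqref{eq:R-upper-def}, $\Rupper\ge\norm{\scu}$, and $\norm{b}=\norm{\As\scu}\le(\lmax+\rs)\norm{\scu}$). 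For the iteration count I would rewrite the perturbed quantities in terms of the original data: plugging $|\tilde b\supind{1}|\ge\sigma\delta\sqrt{\pi/(2d)}$ and the value of $\sigma$ into $6\log(1+\hinge{-\lmin}^2/(4\rho|\tilde b\supind{1}|))$, and using that $\lmin<0$ forces $\rs\ge-\lmin$ and hence $\hinge{-\lmin}^2/\rho^2\le\norm{\scu}^2$, the problem-dependent prefactor cancels against $\beta+2\rs$, leaving an expression bounded by a constant times $\I_{\{\lmin<0\}}\,\sqrt d\,(\beta+2\rs)\norm{\scu}^2/(\sigbar\delta\eps)$; submultiplicativity $\log(1+ab)\le\log(1+a)+\log(1+b)$ then splits this into the $6\tiltgrow(\delta,\sigbar)$ term plus an $O(\tiltconv(\eps))$ remainder, which together with $6\log((\beta+2\rho\norm{\soltilde})\norm{\soltilde}^2/\eps)=O(\tiltconv(\eps))$ yields the coefficient $14$ of $\tiltconv$, while the factor $(1+\sigbar)^{-1}$ on $\eta$ absorbs the slack from targeting accuracy $\eps$ rather than $(1+\sigbar)\eps$.

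The main obstacle is controlling the perturbed solution $\soltilde$ against the original $\scu$ — not for the accuracy estimate (where $\norm{\soltilde}\le\Rupper$ suffices) but for the iteration count, where $\norm{\soltilde}^2$ and the effective conditioning $\rho\norm{\soltilde}+\lmin$ must be compared with $\norm{\scu}^2$ and $\rs+\lmin$. Near the hard case the solution map $b\mapsto\scu$ is only H\"older-continuous (in particular $\rho\norm{\soltilde}+\lmin$ collapses to order $\sqrt{\rho|\tilde b\supind{1}|}$, forcing the $10\norm{\soltilde}^2/\eps$ branch of the minimum to be used there), so although $\norm{\soltilde}\to\norm{\scu}$ as $\sigma\to0$, making this quantitative uniformly as the instance degenerates is the crux; it is precisely what dictates taking $\sigma$ proportional to $\eps$.
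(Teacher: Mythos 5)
Your proposal correctly lays out the overall reduction (apply Theorem~\ref{theorem:gradient-descent-cubic} to the perturbed instance, use the anti-concentration of $\univar\supind{1}$ to lower-bound $|\tilde{b}\supind{1}|$, translate the objective back via $|\fcu(x)-\tilde f(x)|\le\sigma\norm{x}$, and chain $\fcu(\tilde x_t)\le\tilde f(\tilde x_t)+\sigma\norm{\tilde x_t}$, $\tilde f(\soltilde)\le\tilde f(\scu)\le\fcu(\scu)+\sigma\norm{\scu}$). The anti-concentration estimate and the solution-quality argument match the paper's Lemma~\ref{lem:rand-prop}(i), (ii) and Part~3 of its proof.

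The gap is precisely the one you flag but do not close. To control the $\min\{(\rho\norm{\soltilde}+\lmin)^{-1},\,10\norm{\soltilde}^2/\eps\}$ prefactor, the paper needs a \emph{multiplicative} comparison $\norm{\soltilde}^2\in(1\pm\sigbar/12)\norm{\scu}^2$, and this is supplied by Lemma~\ref{lem:rand-prop}(iii): the map $b\mapsto\norm{\scu}^2$ is globally $2/\rho$-Lipschitz, so $|\norm{\scu}^2-\norm{\soltilde}^2|\le 2\sigma/\rho$. Your remark that "the solution map $b\mapsto\scu$ is only H\"older-continuous" is misleading here: the solution \emph{vector} may indeed jump discontinuously near the hard case, but the solution \emph{norm squared} is Lipschitz, and that is the quantity that matters. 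The paper's proof uses the inverse function theorem applied to $\scu\mapsto b(\scu)=-\As\scu$, whose Jacobian is $-\hess\fcu(\scu)$; since $\hess\fcu(\scu)\succeq\rho\,\scu(\scu)^T/\norm{\scu}$, one gets
\begin{equation*}
  \normbigg{\frac{\partial\norm{\scu}^2}{\partial b}}
  = 2\norm{(\hess\fcu(\scu))^{\dag}\scu}
  \le 2\norm{(\rho\,\scu(\scu)^T/\norm{\scu})^{\dag}\scu}
  = \frac{2}{\rho}.
\end{equation*}
Your fallback $\norm{\soltilde}\le\Rupper(\tilde b)$ yields only the additive bound $\norm{\soltilde}\le\norm{\scu}+\beta/\rho$ (from~\eqref{eq:Rc-def}), which degenerates when $\norm{\scu}$ is small and cannot deliver the $(1+\sigbar)$ factor. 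With the Lipschitz bound in hand, the paper's Part~1 then performs a case split on whether $\frac{\eps}{10\norm{\scu}^2}\le\rs+\lmin$: in the affirmative case one deduces $\rho\norm{\soltilde}+\lmin\ge(1+\sigbar)^{-1}(\rs+\lmin)$, and in the negative case the sublinear branch of the $\min$ is active for both instances and $\norm{\soltilde}^2\le(1+\sigbar)\norm{\scu}^2$ suffices. Without that Lipschitz fact the iteration-count bound is not established, so as written the proposal is incomplete.
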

\noindent
See Appendix~\ref{sec:proof-gradient-pert} for a proof.

\subsection{Subspace perturbation for Krylov methods}
\label{sec:subspace-perturbation}
For Krylov subspace methods we need not
perturb the data and may
instead draw a spherically symmetric random vector
$\univar$ and use the \emph{joint Krylov subspace}
\begin{equation*}
  \Krylov[t][A, \{b, \univar\}] \defeq \mathrm{span}\{b, Ab, \ldots, A^{t-1}b, 
  \univar, A\univar, \ldots, A^{t-1}\univar\}.
\end{equation*}
The \emph{block Lanczos} method~\cite{CullumDo74,Golub77} efficiently solves
both the trust-region and
cubic-regularized problems over $\Krylov[t][A,
  \{b, \univar\}]$, iterating
\begin{equation*}
  \jittertr_t \in \argmin_{x\in\Krylov[t][A,\{b,\univar\}], \norm{x}\le 
    R} \f[A,b](x)
  ~~~ \mbox{and} ~~
  \jittercu_t \in \argmin_{x \in \Krylov[t][A,\{b,\univar\}]}
  \fcu(x)
\end{equation*}
for $\univar \sim \uniform(\sphere^{d-1})$;
we review the technique in
Appendix~\ref{sub:block-lanczos}. Theorem~\ref{theorem:tr-krylov} and
Corollary~\ref{cor:cu} then nearly immediately imply the following convergence
guarantee, whose proof we provide in Appendix~\ref{sec:proof-krylov-random}.
\begin{corollary}
  \label{cor:tr-rand-joint}
  Let $0 < \delta < 1$ and $\jittertr_t$ and $\jittercu_t$ be
  as above, where $\univar \sim \uniform(\sphere^{d-1})$.
  With probability at least $1 - \delta$
  over the choice of $\univar$,  for all $t \in \N$
  \begin{equation*}
    \f(\jittertr_t) - \f(\str) 
    \le \frac{(\lmax - \lmin)R^2}{t^2}
    \left[ 
      4 + \frac{\I_{\{\lmin < 0\}}}{2}
      \log^2\left(\frac{4 d}{\delta^2}\right)
      \right]
  \end{equation*}
  and
  \begin{equation*}
    \fcu(\jittercu_t) - \fcu(\scu)
    \le \frac{(\lmax - \lmin) \norm{\scu}^2}{t^2}
    \left[ 
      4 + \frac{\I_{\{\lmin < 0\}}}{2}
      \log^2\left(\frac{4 d}{\delta^2}\right)
      \right].
  \end{equation*}
\end{corollary}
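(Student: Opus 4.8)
The plan is to rerun the proof of Theorem~\ref{theorem:tr-krylov} (and then invoke the trust-region/cubic reduction behind Corollary~\ref{cor:cu}), replacing the data-dependent alignment $b\supind{1}/\norm{b}$ by the \emph{random} alignment $u\supind{1}=\vmin^T u$ wherever it enters. Two ingredients make this work: (i) the joint Krylov space $\mc{K}_t(A,\{b,u\})$ contains, besides $\mc{K}_t(A,b)$, the whole Krylov space $\mc{K}_t(A,u)$ generated by the random direction, so any reference point built from polynomials in $A$ applied to $b$ and/or $u$ is admissible; and (ii) for $u\sim\uniform(\sphere^{d-1})$ the quantity $\vmin^T u$ is bounded away from zero with high probability, uniformly in $t$.

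For the deterministic step, recall from the proof of Theorem~\ref{theorem:tr-krylov} that the sublinear bound comes from exhibiting a feasible reference point of the form $x_t^\star=y_t+\alpha z_t$: here $y_t\in\mc{K}_t(A,b)$ drives down the ``convex error'' $\f[A_{-\lmin},b](y_t)-\f[A_{-\lmin},b](\str)$ (using $A_{-\lmin}\succeq 0$), while $z_t\approx\vmin$ is built as a polynomial in $A$ applied to $b$; the $\log^2(4\norm{b}^2/(b\supind{1})^2)$ factor reflects that any $z_t=p(A)b$ has $z_t\supind{1}=p(\lmin)b\supind{1}$, so approximating $\vmin$ forces $|p(\lmin)|\gtrsim 1/|b\supind{1}|$ while $p$ stays small on $[\lmin,\lmax]$. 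In the joint subspace I keep $y_t\in\mc{K}_t(A,b)$ untouched but take instead $z_t=p(A)u\in\mc{K}_t(A,u)\subseteq\mc{K}_t(A,\{b,u\})$, so that $z_t\supind{1}=p(\lmin)u\supind{1}$ and the relevant factor becomes $\log^2(4\norm{u}^2/(u\supind{1})^2)=\log^2(4/(u\supind{1})^2)$ since $\norm{u}=1$. The remaining estimates of the proof of Theorem~\ref{theorem:tr-krylov} go through verbatim (with $\norm{\str}^2$ bounded by $R^2$), yielding, on the event that $|u\supind{1}|$ is not too small, a bound of the form $\f(\jittertr_t)-\f(\str)\le \frac{(\lmax-\lmin)R^2}{t^2}\big[4+\tfrac{\I_{\{\lmin<0\}}}{8}\log^2(4/(u\supind{1})^2)\big]$, with the change of $(t-\tfrac12)$ to $t$ and of $\tfrac18$ to $\tfrac12$ absorbed into constants using $d\ge 1$, $\delta<1$.

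For the probabilistic step, I use the standard anticoncentration fact that the coordinate $u\supind{1}=\vmin^T u$ of a uniform point on $\sphere^{d-1}$ has density bounded by $\sqrt{d}$ on $[-1,1]$, so $\P(|u\supind{1}|\le\delta/\sqrt{d})\le\delta$. On the complementary event (probability at least $1-\delta$, and independent of $t$) we have $(u\supind{1})^2\ge\delta^2/d$, hence $\log^2(4/(u\supind{1})^2)\le\log^2(4d/\delta^2)$; substituting into the deterministic bound gives the stated trust-region inequality simultaneously for all $t\in\N$. The cubic bound then follows exactly as Corollary~\ref{cor:cu} follows from Theorem~\ref{theorem:tr-krylov}: block Lanczos minimizes $\fcu$ over $\mc{K}_t(A,\{b,u\})$, the instance $(A,b,\rho)$ coincides with the trust-region instance $R=\norm{\scu}$, and for any $x$ in the subspace with $\norm{x}\le R$ one has $\fcu(x)-\fcu(\scu)=\f(x)-\f(\str)+\tfrac{\rho}{3}(\norm{x}^3-\norm{\str}^3)\le\f(x)-\f(\str)$; applying this to $x=\jittertr_t$ (the trust-region Krylov solution over the joint subspace, which is feasible) and using $\fcu(\jittercu_t)\le\fcu(\jittertr_t)$ finishes.

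The main obstacle is bookkeeping rather than conceptual: one must check carefully that in the proof of Theorem~\ref{theorem:tr-krylov} the generating vector $b$ enters \emph{only} through the $\vmin$-approximation component $z_t$ (so that replacing $b$ by $u$ there is legitimate and does not disturb the convex-error estimate, which must still be run with the true $b$), and then track the universal constants so that $\tfrac18\log^2(4/(u\supind{1})^2)$ with $|u\supind{1}|\ge\delta/\sqrt{d}$ genuinely fits under $\tfrac12\log^2(4d/\delta^2)$ together with the $(t-\tfrac12)\to t$ adjustment, uniformly over $d\ge1$ and $\delta\in(0,1)$. A secondary point is pinning down the explicit constant in the anticoncentration bound for $\vmin^T u$.
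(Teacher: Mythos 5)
Your proposal follows the paper's proof almost exactly: plug $v=u$ into Lemma~\ref{lem:eigenvec-tight} so that $z_t\in\mc{K}_t(A,u)\subseteq\mc{K}_t(A,\{b,u\})$, keep $y_t\in\mc{K}_t(A,b)$ from Lemma~\ref{lem:agd} for the convex-error term, re-run the final display~\eqref{eqn:last-line-sublinear} of Theorem~\ref{theorem:tr-krylov}'s sublinear case, and then use anticoncentration of $\vmin^T u$ on the sphere together with the cubic-reduction of Corollary~\ref{cor:cu}. Two small bookkeeping slips: the transition from $\frac{1}{8(t-\frac12)^2}$ to $\frac{1}{2t^2}$ comes from $t\ge 1$ (so $t-\frac12\ge t/2$), not from ``$d\ge1$, $\delta<1$''; and a density bound of $\sqrt{d}$ for $u\supind{1}$ yields $\P(|u\supind{1}|\le\delta/\sqrt d)\le 2\delta$ rather than $\delta$ — you need the sharper $\sqrt{(d-1)/(2\pi)}$ (equivalently, the paper's observation that $(u\supind{1})^2\sim\betadist(\frac12,\frac{d-1}{2})$ gives $(u\supind{1})^2\ge\frac{\pi}{2}\cdot\frac{\delta^2}{d}$ w.h.p.). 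Both are easily repaired and do not affect the argument's structure.
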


Corollary~\ref{cor:tr-rand-joint} implies we can solve the trust-region
problem to $\epsilon$ accuracy in roughly $\epsilon^{-1/2}\log d$
matrix-vector products, even in the hard case. The main drawback of this
randomization approach is that half the matrix-vector products are expended
on the random vector; when the problem is well-conditioned or when
$|b\supind{1}|/\norms{b}$ is not extremely small, using the standard
subspace solution is nearly twice as fast. In comparison to the data 
perturbation strategy (Corollary~\ref{corr:gradient-pert}), however, the 
subspace perturbation strategy
converges to the optimal solutions with probability 1 rather than hitting an 
error floor due to the choice of perturbation magnitude $\sigma$.

\subsection{Numerical illustration}\label{sec:exp-rand}
To test the effect of randomization, we generate ``hard case'' problem 
instances (with $\kappa=\infty$; see details in 
Appendix~\ref{sec:exp-details}) and compare the subspace 
randomization scheme (Section~\ref{sec:subspace-perturbation}) with 
data perturbation (Section~\ref{sec:data-perturbation}) applied to a 
Krylov subspace solver with different magnitudes of the perturbation 
parameter $\sigma$. Figure~\ref{fig:pert} shows the results: for 
any fixed target accuracy, some choices of $\sigma$ yield faster 
convergence than the joint subspace scheme. However, for any fixed 
$\sigma$, optimization eventually hits a noise floor,
while the joint subspace scheme continues to improve. Choosing $\sigma$ 
requires striking a balance: if too large, the noise floor is high and may
be worse than no perturbation at all; if too small, escaping the 
unperturbed error level will take too long, and the method might falsely 
declare convergence. A practical heuristic for safely choosing $\sigma$ is 
an interesting topic for future research.

\begin{figure}
	\centering
	\includegraphics[width=0.6\columnwidth]{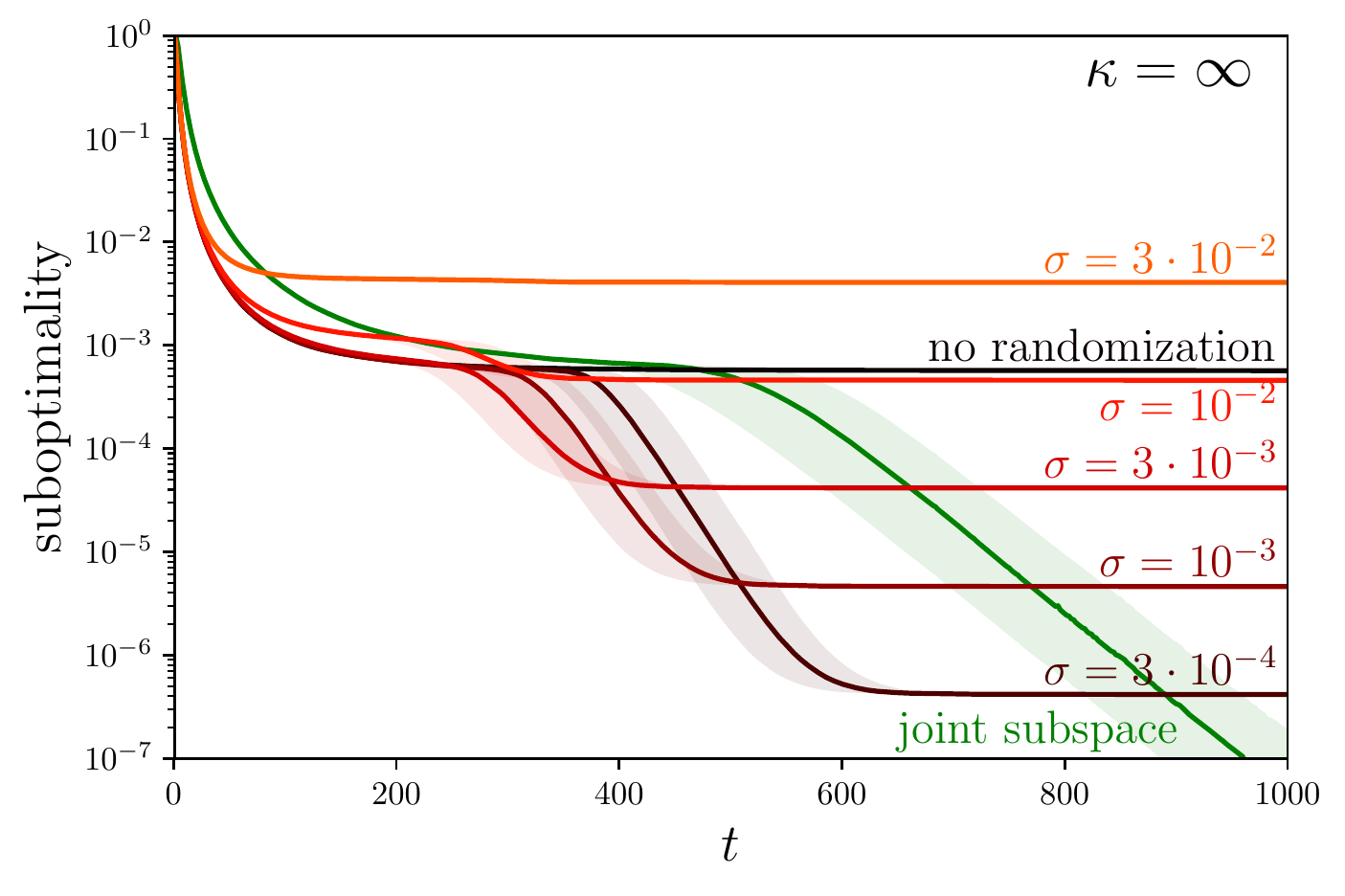}
	\caption{\label{fig:pert}%
		Optimality gap of 
		Krylov subspace solutions on random 
		cubic-regularization problems, versus matrix-vector product number 
		$t$. %
		 Each line represents median suboptimality, and shaded 
				regions represent inter-quartile range.
				Different lines 
				correspond to different randomization settings.
	}
\end{figure}

\section{A Hessian-free majorization method}
\label{sec:majorization}
As the final component of our development, we use our results to analyze the
optimization scheme~\eqref{eqn:generic-problem-iteration}, where we use the
Krylov solver~\eqref{eqn:krylov-iterates} to approximate cubic-regularized
Newton steps. Our purpose is to demonstrate that a method close to
practically effective nonlinear optimization methods---such as
trust-region~\cite{ConnGoTo00} or Adaptive Regularization of Cubics
(ARC)~\cite{CartisGoTo11}---achieves convergence guarantees dominating those
possible for gradient descent.

We wish to minimize a twice differentiable function $g : \R^d \to \R$. More
precisely, we assume that the Hessian $\grad^2 g$ of $g$ is $\rho$-Lipschitz
continuous, and we follow \citet{NesterovPo06} (see
also~\cite{CartisGoTo11,GeHuJiYu15}) to seek an $\epsilon$-second-order
stationary point $y_\epsilon$:
\begin{equation}
  \label{eqn:second-order-crit}
  \norm{\grad g(y_\epsilon)}
  \le \epsilon ~~\textrm{and} ~~
  \hess g(y_\epsilon) \succeq -\sqrt{\rho\epsilon}I.
\end{equation} 
Such points
approximately satisfy second-order necessary conditions
for local minima, providing a stronger guarantee than
$\epsilon$-stationary points satisfying
only $\norm{\grad g(y)} \le \epsilon$.

We revisit method~\eqref{eqn:generic-problem-iteration}, which iteratively
minimizes regularized quadratic models of the function $g$.  To guarantee
convergence, we impose a few assumptions on $g$.
\begin{assumption}
  \label{assumption:g}
  The function $g$ satisfies $\inf g = \glb > -\infty$, is 
  $\beta$-smooth and 
  has $2\rho$-Lipschitz Hessian, i.e., $\norm{\hess g(y) - \hess g(y')} \le 
  2\rho \norm{y-y'}$ for all $y,y'\in\R^d$.
\end{assumption}
The assumptions on boundedness and smoothness are standard, while the third
implies~\cite[Lemma~1]{NesterovPo06} that a cubic-regularized quadratic
model bounds $g$: for all $x, y$,
\begin{equation}
  \label{eqn:majorization}
  g(x) \le g(y)
  + \grad g(y)^{\T}(x-y) + \half (x-y)^{\T} \hess g(y) (x-y)
  + \frac{\rho}{3} \norm{x-y}^3.
\end{equation}
For simplicity we assume that the constants $\beta$ and $\rho$ are
known. This is benign, as we may estimate
these constants without significantly affecting the complexity
bounds, though in practice, careful adaptive estimation
of $\rho$ is crucial for good performance, a primary strength of the ARC
method~\cite{CartisGoTo11}.

\begin{algorithm}[t]
	\caption{A second-order majorization method}\label{alg:SP}
	\begin{algorithmic}[1]
		\Function{\SP}{$y_0$, $g$, $\beta$, $\rho$, $\epsilon$, $\delta$}
		\label{func:SP}
		\State Set $\slb = 
		\sqrt{\frac{\epsilon}{9\rho}}$
		\For {$k = 1, 2, \ldots$} \Comment{guaranteed to terminate in at 
		most  
			$O(\epsilon^{-3/2})$ iterations}
		\State $\Delta_k \gets \callSSP{\hess g(y_{k-1}),\
			\grad g(y_{k-1}),\  \rho,\  \beta,\ 
			\slb,\ \frac{\delta}{2k^2}}$
		\State $y_k \gets y_{k - 1} + \Delta_k$
		\If{ $g(y_k)
			> g(y_{k-1}) - \frac{1}{12}\rho r^3$} 
			\label{line:progress}
		\State $\Delta\subfin
		\gets \callSFSP{\hess g(y_{k-1}),\  \grad g(y_{k-1}),\
			\rho,\  \beta,\ r,\ \frac{2\epsilon}{3}}$
		\State \Return{$y_{k-1} + \Delta\subfin$}
		\EndIf
		\EndFor
		\EndFunction
	\end{algorithmic}
	\begin{algorithmic}[1]
		\Function{\SSP}{$A$, $b$, $\rho$, $\beta$, $\slb$,
			$\delta$}\label{func:SSP}
		\State Draw $\univar$ uniformly from the sphere in $\R^d$
		\State Set $\Tinner =
		\ceil{\sqrt{\frac{24 \beta}{\rho \slb}
				\left(4 + \half \log^2 \frac{4d}{\delta^2}\right)}\,}$
			\Comment{
				$\Tinner = \Otil{\epsilon^{-1/4}}$}
		\State \Return $\argmin_{x \in \Krylov[\Tfinal][A, \{b, \univar\}]}
		\{\fcu(x)=\half x^T A x + b^T x + \frac{\rho}{3}\norm{x}^3\}$
		\EndFunction
	\end{algorithmic}
	\begin{algorithmic}[1]
		\Function{\SFSP}{$A$, $b$, $\rho$, $\beta$, 
			$r$,
			$\epsgrad$}\label{func:SFSP}
		\State Set $\Tfinal = \ceil{
			\frac{1}{4}\sqrt{\frac{\beta+2\rho r}{\rho r}} \log 
			\frac{36(\beta+2\rho r)^2 
			r^2}{\epsgrad^2}}$
		\Comment{
			$\Tfinal = \Otil{\epsilon^{-1/4}}$}
		\State \Return $\argmin_{x \in \Krylov[\Tfinal]}
		\{\half x^T A x + b^T x + \rho r \norm{x}^2\}$
		\EndFunction
	\end{algorithmic}
\end{algorithm}

Algorithm~\ref{alg:SP} outlines a
majorization-minimization~\cite{NocedalWr06} strategy for
optimizing $g$.  At each iteration, the method approximately minimizes a
cubic-regularized quadratic model $\fcu[\grad^2 g(y_k),\grad g(y_k), \rho]$
of $g$ via a call to \callSSP{}, then shifts the
iterate by the approximate minimizer $\Delta_k$. If the new iterate $y_k$
makes sufficient progress (decreasing the $g$ by at least
$\frac{1}{12}\rho r^3$), the algorithm proceeds. Otherwise, it minimizes a
single regularized quadratic model via a call of \callSFSP{} and halts.
  
The progress criterion immediately bounds the number $\Touter$ of calls 
to \callSSP{}, as each iteration satisfies $g(y_{k-1}) - 
g(y_k) 
\ge \frac{1}{12}\rho r^3 = \Omega(1) \frac{\epsilon^{3/2}}{\rho^{1/2}}$, so
\begin{equation}
  \label{eqn:nesterov-polyak-progress}
  g(y_0) - g(y_{\Touter}) 
  = \sum_{k = 1}^{\Touter} \left[g(y_{k-1}) - g(y_{k})\right]
  = \Omega(1) \Touter \rho^{-1/2} \epsilon^{3/2} .
\end{equation}
As $g(y_{\Touter}) \ge \glb$, we rearrange this to obtain $\Touter \le
O(1) \frac{\sqrt{\rho}(g(y_0) - \glb)}{\epsilon^{3/2}}$; this is the
familiar $\epsilon^{-3/2}$ iteration bound of \citet{NesterovPo06}. 
To within logarithmic factors,
both $\Tinner$ and $\Tfinal$ in Alg.~\ref{alg:SP} 
scale as $\beta^{1/2}(\rho \epsilon)^{-1/4}$, so the 
total first-order evaluation cost of the algorithm is 
$\frac{\beta^{1/2}\rho^{1/4}(g(y_0)-\glb)}{\epsilon^{7/4}}$
(to within logarithmic factors).

It remains to guarantee that at termination, \callSP{} outputs 
an approximate second-order stationary point.
The majorization property~\eqref{eqn:majorization} guarantees that the 
step of $y_k \gets y_{k-1} + \Delta_k$ decreases $g$ by at least the 
amount that $\Delta_k$ decreases the model at $y_{k-1}$. The following 
lemma guarantees that $\callSSP{}$ decreases the model by at least 
$\frac{1}{12}\rho r^3$ whenever the exact model minimizer has norm at 
least $r$.

\begin{lemma}
  \label{lem:SSP}
  Let $A \in \R^{d\times d}$ satisfy $\opnorm{A} \le \beta$, $b\in\R^d$,
  $\rho > 0, \slb > 0$, $\delta \in (0,1)$, and
  $\scu = \argmin_x \fcu[A,b,\rho](x)$. With probability at least
  $1-\delta$, if $\norm{\scu} \ge \slb$ 
  then $x=\text{\callSSP{$A$, $b$, $\rho$, $\slb$, $\delta$}}$ satisfies 
  $\fcu(x) \le -\frac{1}{12} \rho\slb^3$.
\end{lemma}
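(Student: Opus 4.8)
The plan is to apply the randomized sublinear guarantee of Corollary~\ref{cor:tr-rand-joint} at exactly the Krylov order $t=\Tinner$ used by \SSP, and then turn the resulting bound on $\fcu(x)-\fcu(\scu)$ into the absolute bound $\fcu(x)\le-\tfrac{1}{12}\rho\slb^3$ by (i) upper bounding $\fcu(\scu)$ in terms of $\norm{\scu}$ via the optimality conditions of Proposition~\ref{prop:characterization}, and (ii) invoking the hypothesis $\norm{\scu}\ge\slb$.

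First I would observe that the point $x$ returned by \callSSP{$A$, $b$, $\rho$, $\slb$, $\delta$} is precisely the joint-Krylov solution $\jittercu_{\Tinner}$ of Section~\ref{sec:subspace-perturbation} for the instance $(A,b,\rho)$, so Corollary~\ref{cor:tr-rand-joint} applies and gives, with probability at least $1-\delta$,
\[
  \fcu(x)-\fcu(\scu) \le \frac{(\lmax-\lmin)\norm{\scu}^2}{\Tinner^2}\left[4+\frac{\I_{\{\lmin<0\}}}{2}\log^2\frac{4d}{\delta^2}\right].
\]
Since $\opnorm{A}\le\beta$ puts every eigenvalue of $A$ in $[-\beta,\beta]$, we have $\lmax-\lmin\le 2\beta$, and the bracketed factor is at most $4+\tfrac12\log^2\tfrac{4d}{\delta^2}$. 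Plugging in $\Tinner=\ceil{\sqrt{\tfrac{24\beta}{\rho\slb}(4+\tfrac12\log^2\tfrac{4d}{\delta^2})}}$, which in particular yields $\Tinner^2\ge\tfrac{24\beta}{\rho\slb}(4+\tfrac12\log^2\tfrac{4d}{\delta^2})$, collapses the right-hand side and leaves $\fcu(x)\le\fcu(\scu)+\tfrac{1}{12}\rho\slb\norm{\scu}^2$ with probability at least $1-\delta$.

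Next I would bound $\fcu(\scu)$. By Proposition~\ref{prop:characterization}, $(A+\rho\norm{\scu}I)\scu=-b$ and $A+\rho\norm{\scu}I\succeq 0$; substituting $b^T\scu=-\scu^T A\scu-\rho\norm{\scu}^3$ into $\fcu(\scu)=\tfrac12\scu^T A\scu+b^T\scu+\tfrac{\rho}{3}\norm{\scu}^3$ gives $\fcu(\scu)=-\tfrac12\scu^T A\scu-\tfrac{2\rho}{3}\norm{\scu}^3$, and the semidefiniteness bound $\scu^T A\scu\ge-\rho\norm{\scu}^3$ then yields $\fcu(\scu)\le\tfrac12\rho\norm{\scu}^3-\tfrac{2\rho}{3}\norm{\scu}^3=-\tfrac16\rho\norm{\scu}^3$. (Equivalently, this is identity~\eqref{eq:fx-s-expression} evaluated at $x=0$ together with $A+\rho\norm{\scu}I\succeq0$, which one could cite directly.)

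Combining the two estimates and writing $R\defeq\norm{\scu}$, we get $\fcu(x)\le-\tfrac16\rho R^3+\tfrac1{12}\rho\slb R^2$, and it remains only to verify the scalar inequality $-\tfrac16\rho R^3+\tfrac1{12}\rho\slb R^2\le-\tfrac1{12}\rho\slb^3$ under $R\ge\slb$: dividing by $\rho R^2$ and using $\slb\le R$ gives $\tfrac16R-\tfrac1{12}\slb\ge\tfrac1{12}R$, hence $\tfrac16\rho R^3-\tfrac1{12}\rho\slb R^2\ge\tfrac1{12}\rho R^3\ge\tfrac1{12}\rho\slb^3$, which is exactly what is needed. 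I do not expect a genuine obstacle here: the lemma is essentially a specialization of Corollary~\ref{cor:tr-rand-joint} plus constant bookkeeping. The only points requiring care are that the $\I_{\{\lmin<0\}}$ term in the corollary is dominated by the unconditional $\tfrac12$ built into the choice of $\Tinner$, and—purely cosmetically—reconciling notation, since the order written $\Tfinal$ in the return line of \SSP is the $\Tinner$ computed on the preceding line.
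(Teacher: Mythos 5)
Your proof is correct and follows the same route as the paper's: apply Corollary~\ref{cor:tr-rand-joint} at $t=\Tinner$, bound $\fcu(\scu)\le-\tfrac{1}{6}\rho\norm{\scu}^3$ via the optimality conditions (which is exactly the paper's $-\tfrac12(\scu)^T\As\scu-\tfrac{\rho}{6}\norm{\scu}^3\le-\tfrac{\rho}{6}\norm{\scu}^3$ in a different guise), and finish with $\norm{\scu}\ge\slb$. Your bookkeeping with $\lmax-\lmin\le2\beta$ and $\Tinner^2\ge\tfrac{24\beta}{\rho\slb}(\cdots)$ is actually cleaner and more internally consistent than the paper's own write-up, which slips to ``$\Tinner\ge\sqrt{12\beta/(\rho\slb)\cdots}$'' and a bare $\beta$ in the numerator (the two errors cancel).
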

\noindent
Lemma~\ref{lem:SSP} follows by straightforward application of 
Corollary~\ref{cor:tr-rand-joint}; we provide the proof in 
Appendix~\ref{sec:proof-SSP}. It is the inexact analogue of the 
 progress guarantee 
of~\citet[Lemma 4]{NesterovPo06}, which forms the basis of their 
convergence proof.

Now let 
$K=\Touter=O(\epsilon^{-3/2})$ be the final iterate of 
Algorithm~\ref{alg:SP}, and let 
 $\siter_K$ be the global
minimizer (in $\Delta$) of the model~\eqref{eqn:majorization} at 
$y=y_{K-1}$.
Lemma~\ref{lem:SSP} guarantees
that with high probability, since \callSSP{} fails to meet the progress
condition in line~\ref{line:progress}, then $\|\siter_k\|
\le r$. Therefore, by Proposition~\ref{prop:characterization}, it holds that 
$\hess g(y_{K-1}) \succeq -\rho r I \succeq \sqrt{\rho \epsilon} I$. 
 It is possible, nonetheless,
that $\norm{\grad g(y_{K-1})} > \epsilon$; to address this, we correctively 
minimize a regularized quadratic model around $y_{K-1}$, taking 
advantage of the fact that $\hess g(y_{K-1}) \succeq -\rho r I$ to argue 
that the regularized model is strongly convex and hence that the Krylov 
subspace (i.e., conjugate gradient) method converges linearly. We 
formalize 
this guarantee in the following lemma; see Appendix~\ref{sec:proof-SFSP} 
for proof.
\begin{lemma}
	\label{lem:SFSP}
	Let $A \in \R^{d\times d}$ satisfy $-\rho r I \preceq A \preceq \beta I$ 
	for  
	$\rho, \slb, \beta > 0$, and let $b\in\R^d$. If $\norm{(A+2\rho r 
	I)^{-1}b} \le r$ 
	then $x=\text{\callSFSP{$A$, $b$, $\rho$, $\slb$, $\epsgrad$}}$ 
	satisfies $\norm{x} \le r$ and $\norm{Ax +b} \le \epsgrad + 2\rho r^2$.
\end{lemma}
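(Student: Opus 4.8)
The plan is to recognize that \callSFSP{} is simply conjugate gradient---equivalently, exact minimization over the Krylov subspace---applied to a \emph{strongly convex} quadratic, and then to combine two facts: the classical monotonicity of conjugate-gradient iterate norms, which delivers $\norm{x}\le r$, and the linear-convergence bound of Theorem~\ref{theorem:tr-krylov} specialized to the convex case, which delivers the residual bound.

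First I would set $M \defeq A + 2\rho r I$, so that the hypothesis $-\rho r I \preceq A \preceq \beta I$ gives $\rho r I \preceq M \preceq (\beta + 2\rho r) I$; in particular $M \succ 0$ and $\opnorm{M}\le \beta+2\rho r$. The objective minimized by \callSFSP{} is then the strongly convex quadratic $\half x^T A x + b^T x + \rho r \norm{x}^2 = \f[M,b](x)$, whose unique minimizer is $z \defeq -M^{-1}b = -(A+2\rho r I)^{-1}b$, and the hypothesis of the lemma reads exactly $\norm{z}\le r$. Because Krylov subspaces are invariant under shifts by scalar multiples of the identity, $\Krylov[\Tfinal][A,b]=\Krylov[\Tfinal][M,b]$, so the point $x$ returned by \callSFSP{} is exactly the $\Tfinal$-th conjugate-gradient iterate for $Mx=-b$ started from $x_0=0$.

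For $\norm{x}\le r$ I would invoke the classical fact that the conjugate-gradient iterates of a positive definite system, started from the origin, have nondecreasing Euclidean norm and converge to $z$ (this underlies truncated-CG trust-region solvers; see~\cite{Steihaug83} or \cite[Ch.~7]{ConnGoTo00}), hence $\norm{x}\le\norm{z}\le r$. For the residual bound I would apply the first (linear) bound of Theorem~\ref{theorem:tr-krylov} to the instance $(M,b)$ with $R=\infty$, so the radius constraint is inactive, $\ltr=0$, and the trust-region solution equals $z$; using $\f[M,b](0)-\f[M,b](z)=\tfrac12 z^{T}Mz\le\tfrac12(\beta+2\rho r)r^2$ together with $\lmin(M)\ge\rho r$ and $\lmax(M)\le\beta+2\rho r$, the theorem yields
\[
  \tfrac12 (x-z)^{T} M (x-z) = \f[M,b](x)-\f[M,b](z)
  \le 18(\beta+2\rho r)r^2\exp\Big(-4\Tfinal\sqrt{\tfrac{\rho r}{\beta+2\rho r}}\Big).
\]
Substituting the prescribed value of $\Tfinal$ makes the right-hand side at most $\epsgrad^2/\big(2(\beta+2\rho r)\big)$, so $(x-z)^{T}M(x-z)\le\epsgrad^2/(\beta+2\rho r)$, and since $M^2\preceq\opnorm{M}M$ this gives $\norm{M(x-z)}^2\le\opnorm{M}(x-z)^{T}M(x-z)\le\epsgrad^2$. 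Finally, $Mz+b=0$ implies $Ax+b = Mx - 2\rho r\, x + b = M(x-z) - 2\rho r\, x$, whence $\norm{Ax+b}\le\norm{M(x-z)}+2\rho r\norm{x}\le\epsgrad+2\rho r^2$ using the already-established $\norm{x}\le r$.

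The proof is genuinely short, so the ``main obstacle'' is modest: the bound $\norm{x}\le r$ does \emph{not} follow from the best-in-subspace characterization of $x$ alone and really does use the conjugate-gradient structure, and one must track the spectral constants carefully so that the chosen $\Tfinal$ exactly cancels the factor $36(\beta+2\rho r)^2r^2/\epsgrad^2$. One should also note the degenerate case in which $\Krylov[\Tfinal][A,b]$ already contains $z$ (then $x=z$ and both conclusions are immediate), though in fact the monotonicity property and Theorem~\ref{theorem:tr-krylov} remain valid in that case and no separate argument is needed.
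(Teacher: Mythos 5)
Your proof is correct and follows essentially the same route as the paper: recognize \callSFSP{} as CG on the strongly convex quadratic with matrix $M=A+2\rho r I$, obtain $\norm{x}\le r$ from the Steihaug monotonicity of CG iterate norms, obtain the function-value decay from the linear-rate bound of Theorem~\ref{theorem:tr-krylov}/Corollary~\ref{cor:cu} applied to the strongly convex problem (you invoke the theorem directly with $R=\infty$, the paper invokes Corollary~\ref{cor:cu} with $\rho=0$; the two specializations yield the identical $36[f(0)-f(z)]\exp(-4t\sqrt{\rho r/(\beta+2\rho r)})$ bound), and then convert the function-value gap into a residual bound via $\norm{M(x-z)}^2\le\opnorm{M}(x-z)^TM(x-z)$, which is the same inequality as the paper's $\norm{\grad f(x)}\le\sqrt{2\opnorm{M}(f(x)-f(z))}$. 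The constant-tracking and the final triangle-inequality step $\norm{Ax+b}\le\norm{M(x-z)}+2\rho r\norm{x}\le\epsgrad+2\rho r^2$ match the paper exactly.
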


Combining Lemmas~\ref{lem:SSP} and~\ref{lem:SFSP} and leveraging the  
continuity of $\hess g$ similarly to 
\citet[Lemma 5]{NesterovPo06}, we obtain the following guarantee
for Algorithm~\ref{alg:SP}, whose proof we provide in
Appendix~\ref{sec:proof-SP}.
\begin{proposition}
  \label{prop:SP}
  Let $g$ satisfy Assumption \ref{assumption:g},  let $y_0\in\R^d$ be 
  arbitrary,
  and let $\delta \in (0, 1]$ and $\epsilon \le
    \min\{\beta^2/\rho,\rho^{1/3}(g(y_0)-\glb)^{2/3}\}$. With probability at
    least $1-\delta$, Algorithm \ref{alg:SP} finds an
    $\epsilon$-second-order stationary point~\eqref{eqn:second-order-crit}
    in at most
    \begin{equation}
      O(1) \cdot \frac{\beta^{1/2} \rho^{1/4} (g(y_0)-\glb)}{\epsilon^{7/4}}
      \left[
        \log\frac{d}{\delta^2}
        + \log \frac{\beta^{1/2} \rho^{1/4} (g(y_0)-\glb)}{\epsilon^{7/4}}
        \right]
      \label{eq:trustregion-complexity}
    \end{equation}
    Hessian-vector product evaluations and
    at most
    \begin{equation*}
      O(1) \cdot \frac{\sqrt{\rho}(g(y_0)-\glb)}{\epsilon^{3/2}}
    \end{equation*}
    calls to \callSSP{} and gradient evaluations.
\end{proposition}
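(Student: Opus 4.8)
The plan is to condition on the success of every randomized subproblem solve, use the Nesterov--Polyak descent bound~\eqref{eqn:nesterov-polyak-progress} to control the number of outer iterations, and then verify the second-order stationarity conditions~\eqref{eqn:second-order-crit} at the returned point by chaining Lemmas~\ref{lem:SSP} and~\ref{lem:SFSP} through the majorization bound~\eqref{eqn:majorization}.

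First I would fix the randomness: the $k$-th call to \callSSP{} is run with failure probability $\delta/(2k^2)$, so a union bound makes the conclusion of Lemma~\ref{lem:SSP} hold for every $k$ with probability at least $1-\sum_{k\ge1}\delta/(2k^2)=1-\pi^2\delta/12>1-\delta$; I condition on this event. Next, note that whenever the algorithm does not exit at line~\ref{line:progress} the progress test fails, so $g(y_k)\le g(y_{k-1})-\tfrac1{12}\rho\slb^3$; since $g\ge\glb$ this forces termination after $K\le 1+12(g(y_0)-\glb)/(\rho\slb^3)$ iterations, which with $\slb=\sqrt{\epsilon/(9\rho)}$ is $O(1)\sqrt{\rho}(g(y_0)-\glb)\epsilon^{-3/2}$. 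Each iteration uses $O(1)$ calls to \callSSP{} and $O(1)$ gradient (and function) evaluations, which gives the stated count for those.

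The core of the argument is showing the output is an $\epsilon$-second-order stationary point. Let $K$ be the terminal iteration. Because the progress test held there, combining~\eqref{eqn:majorization} (which yields $g(y_{K-1})-g(y_K)\ge-\fcu[\hess g(y_{K-1}),\grad g(y_{K-1}),\rho](\Delta_K)$, since the model vanishes at $0$) with the contrapositive of Lemma~\ref{lem:SSP} forces the exact model minimizer $\siter_K$ to satisfy $\norm{\siter_K}<\slb$. Proposition~\ref{prop:characterization} applied to the $K$-th cubic model then gives $\hess g(y_{K-1})\succeq-\rho\norm{\siter_K}I\succeq-\rho\slb I=-\tfrac13\sqrt{\rho\epsilon}\,I$, so the curvature at $y_{K-1}$ is already controlled. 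I would then check the hypotheses of Lemma~\ref{lem:SFSP} for the corrective \callSFSP{} call: $-\rho\slb I\preceq\hess g(y_{K-1})\preceq\beta I$ (left bound above, right bound from $\beta$-smoothness), and $\norm{(\hess g(y_{K-1})+2\rho\slb I)^{-1}\grad g(y_{K-1})}\le\slb$, which follows from $\norm{\siter_K}<\slb$ by comparing, in the eigenbasis of $\hess g(y_{K-1})$, the factors $(\lambda_i+2\rho\slb)^{-2}$ and $(\lambda_i+\lambda^\star)^{-2}$ using $\lambda^\star=\rho\norm{\siter_K}\in[0,2\rho\slb)$ and $\lambda_i+\lambda^\star\ge0$ (and noting that components of $\grad g(y_{K-1})$ on the kernel of $\hess g(y_{K-1})+\lambda^\star I$ vanish by stationarity). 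Lemma~\ref{lem:SFSP} with $\epsgrad=\tfrac23\epsilon$ then returns $\Delta\subfin$ with $\norm{\Delta\subfin}\le\slb$ and $\norm{\hess g(y_{K-1})\Delta\subfin+\grad g(y_{K-1})}\le\tfrac23\epsilon+2\rho\slb^2=\tfrac89\epsilon$. Passing to the output $y_{K-1}+\Delta\subfin$ via the $2\rho$-Lipschitz continuity of $\hess g$ gives $\norm{\grad g(y_{K-1}+\Delta\subfin)}\le\tfrac89\epsilon+\rho\norm{\Delta\subfin}^2\le\epsilon$ and $\hess g(y_{K-1}+\Delta\subfin)\succeq\hess g(y_{K-1})-2\rho\norm{\Delta\subfin}I\succeq-3\rho\slb I=-\sqrt{\rho\epsilon}\,I$, establishing~\eqref{eqn:second-order-crit}.

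Finally I would tally Hessian-vector products: the $k$-th \callSSP{} call costs $O(\Tinner)=O(1)\beta^{1/2}(\rho\slb)^{-1/2}\log(dk/\delta)=O(1)\beta^{1/2}(\rho\epsilon)^{-1/4}\log(dk/\delta)$ products, and summing over $k\le K$ while using $\epsilon\le\beta^2/\rho$ to absorb additive constants and to show $K\le O(1)\beta^{1/2}\rho^{1/4}(g(y_0)-\glb)\epsilon^{-7/4}$ (so that $\log K$ has the stated form) reproduces exactly~\eqref{eq:trustregion-complexity}; the single \callSFSP{} call adds only $O(\Tfinal)=\Otil{\beta^{1/2}(\rho\epsilon)^{-1/4}}$ products, which is dominated by the contribution of a single \callSSP{} call (here the assumption $\epsilon\le\rho^{1/3}(g(y_0)-\glb)^{2/3}$, equivalently $K\ge\Omega(1)$, is what guarantees there is one). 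I expect the main obstacle to be the implication $\norm{\siter_K}<\slb\Rightarrow\norm{(\hess g(y_{K-1})+2\rho\slb I)^{-1}\grad g(y_{K-1})}\le\slb$, which needs monotonicity of $\lambda\mapsto\norm{(\hess g(y_{K-1})+\lambda I)^{-1}\grad g(y_{K-1})}$ above the optimal multiplier together with the vanishing of near-kernel components, and alongside it the careful bookkeeping of the numerical constants ($\slb^2=\epsilon/(9\rho)$, $\rho\slb=\tfrac13\sqrt{\rho\epsilon}$) so that the two error sources in the corrective step sum to precisely $\epsilon$ and $\sqrt{\rho\epsilon}$; the remainder is routine assembly of Lemmas~\ref{lem:SSP} and~\ref{lem:SFSP}.
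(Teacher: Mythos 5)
Your proposal is correct and tracks the paper's own proof essentially step for step: the union bound over the $\delta/(2k^2)$ failure probabilities, the Nesterov--Polyak progress argument bounding the number of outer iterations, the contrapositive of Lemma~\ref{lem:SSP} combined with the majorization~\eqref{eqn:majorization} to force $\norm{\siter_K} < \slb$ at termination, the verification of both hypotheses of Lemma~\ref{lem:SFSP} (including the monotonicity-plus-near-kernel argument giving $\norm{(\hess g(y_{K-1}) + 2\rho\slb I)^{-1}\grad g(y_{K-1})} \le \slb$), and the final Lipschitz-Hessian transfer from $y_{K-1}$ to the returned point and tallying of Hessian-vector products. The constant bookkeeping ($\rho\slb = \tfrac13\sqrt{\rho\epsilon}$, $\rho\slb^2 = \epsilon/9$, $\epsgrad = \tfrac23\epsilon$) matches, and your added remark that $\epsilon \le \rho^{1/3}(g(y_0)-\glb)^{2/3}$ ensures $K \ge \Omega(1)$ so that the single \callSFSP{} call is dominated by the \callSSP{} contribution is a correct (if slightly different) reading of how the paper uses that hypothesis.
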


We conclude with two brief remarks: First, as a consequence of the results
here, a cubic-regularization approach with a natural efficient cubic
subproblem solver achieves the best known rates of convergence for
first-order methods, meeting the bounds of recent methods using acceleration
techniques~\cite{AgarwalAlBuHaMa17, CarmonDuHiSi18}.  Second, the
assumptions on $\epsilon$ in Proposition~\ref{prop:SP} guarantee that the
bound~\eqref{eq:trustregion-complexity} is non-trivial.  If $\epsilon >
\beta^2 / \rho$, then the Hessian guarantee~\eqref{eqn:second-order-crit} is
trivial, and with constant stepsize $\eta = \frac{1}{\beta}$, gradient
descent guarantees~\cite[Eq.~(1.2.13)]{Nesterov04} an iterate $y_k$ with
$\norm{\grad g(y_k)} \le \epsilon$ in at most
\begin{equation*}
  \frac{\beta (g(y_0) - \glb)}{\epsilon^2}
  = \frac{\beta^{1/2} (g(y_0) - \glb)}{\epsilon^{7/4}}
  \left(\frac{\beta^2}{\epsilon}\right)^{1/4}
  < \frac{\beta^{1/2} \rho^{1/4} (g(y_0) - \glb)}{\epsilon^{7/4}}
\end{equation*}
iterations,
so that gradient descent outperforms the majorization method.
Similarly, the final statement in Proposition~\ref{prop:SP}
shows that if $\epsilon > \rho^{1/3}(g(y_0)-\glb)^{2/3}$ then
\callSSP{} executes $O(1)$ times, and the overall first-order complexity
becomes $\Otil{1} \frac{\beta^{1/2}}{(\rho \epsilon)^{1/4}}$.

\section{Discussion}
\label{sec:discussion}
We explore the connections between our results on potentially nonconvex
quadratic problems and classical results on convex optimization and the
eigenvector problem in more detail. We also remark on the differences
between the analyses we employ for the gradient-descent and Krylov methods
and note a few additional results that appear in the original
papers~\cite{CarmonDu19,CarmonDu18}.

\subsection{Comparison to convex optimization}
For $L$-smooth and $\lambda$-strongly convex functions with a bound 
$R$ on the distance between the initial point and an optimum, gradient 
descent finds an $\eps$-suboptimal point in 
\begin{equation*}
  O(1) \cdot \min\left\{ \frac{L}{\lambda}\log \frac{L R^2}{\eps},
    \frac{L R^2}{\eps} \right\}
\end{equation*}
iterations~\cite{Nesterov04}. 
For the (possibly nonconvex) problem~\eqref{eqn:problem-cubic},
gradient descent finds an
$\eps$-suboptimal point (with probability at least $1 - \delta$) within
\begin{equation*}
  O(1) \cdot
  \min \left\{
    \frac{\Ls}{\lambda\subopt}, 
    \frac{\Ls \norm{\scu}^2}{\eps} \right\}\left[\log\frac{\Ls \norm{\scu}^2}{ \eps}
    + \log
    \left(1 + \I_{\{\lmin < 0\}}\frac{ d}{\delta}\right)
  \right]
\end{equation*}
iterations by Corollary~\ref{corr:gradient-pert},
where $\Ls = \beta + 2\rs$ and $\lambda\subopt = \rs + 
\lmin$. The parallels are immediate: by
Lemma~\ref{lemma:monotone}, $\Ls$ and $\norm{\scu}$ are precise analogues of
$L$ and $R$ in the convex setting. Moreover,
$\lambda\subopt$
plays the role of the strong convexity parameter $\lambda$ but is
well-defined even when $\fcu$ is not convex.  When $\lmin(A) \ge 0$, $\fcu$
is $\lmin$-strongly convex, and because $\rs + \lmin > \lmin$, our analysis
for the cubic problem~\eqref{eqn:problem-cubic} guarantees better
conditioning than the generic convex result.  The difference between $\rs +
\lmin$ and $\lmin$ becomes significant when $b$ is large, as
$\norms{\scu}$ is monotonic in $\alpha > 0$ whenever
$b = \alpha u$ for a vector $u$.
Even in the
nonconvex case that $\lmin < 0$, gradient descent still exhibits linear
convergence for high accuracy solutions when $\eps/\norm{\scu}^2
\le \rs + \lmin$.
When $\lmin < 0$, our guarantee becomes probabilistic and contains a
$\log(d/\delta)$ term. Such a term does not appear in results on convex
optimization, and saddle-points in the objective~\cite{SimchowitzAlRe18}
make it fundamental.

The analogy of our results to the convex case
extends to accelerated methods in optimization.
With the notation as above, for $L$-smooth and
$\lambda$-strongly convex functions and $R \ge \norm{x_0 - x\opt}$,
Nesterov's accelerated gradient method~\cite{Nesterov04} finds an
$\eps$-suboptimal point within
\begin{equation*}
  O(1) \cdot \min\bigg\{\sqrt{\frac{L}{\lambda}}
  \log \frac{L R^2}{\eps}, \sqrt{\frac{LR^2}{\eps}}\bigg\}
\end{equation*}
iterations. As $\fcu(0) - \fcu\opt \le O(1) \Ls \norm{\scu}^2$, 
Corollary~\ref{cor:tr-rand-joint} guarantees that the perturbed 
joint Krylov method of Section~\ref{sec:subspace-perturbation}
finds an $\eps$-suboptimal point within
\begin{equation*}
  O(1) \cdot \min\bigg\{
  \sqrt{\frac{\Ls}{\lambda\subopt}} \log \frac{\Ls \norm{\scu}^2}{\eps},
  \sqrt{\frac{\Ls \norm{\scu}^2}{\eps}}
  \left(1 + \I_{\{\lmin < 0\}}\log\frac{d}{\delta}\right)
  \bigg\}
\end{equation*}
iterations. Just as with gradient descent, the nonconvexity engenders
a necessary $\log\frac{d}{\delta}$ term, but we see completely parallel
results. Indeed, in the case that $A\succeq 0$ and $\rho=0$ (or 
$R=\infty$ for~\eqref{eqn:problem-tr}), the Krylov subspace solutions are 
the iterates of conjugate gradient, and our bounds include their 
convergence guarantees as special cases.

\subsection{Comparisons with the eigenvector problem}
Minimizing $x^T A x$ subject to $\norm{x}=1$ is a prototypical nonconvex
yet tractable optimization problem, whose solution is the eigenvector
$\vmin$ of $A$ corresponding to its smallest eigenvalue $\lmin$. The power
method iterates $x_{t+1} = (I- (1/\beta) A)x_t/\norms{(I-(1/\beta) A)x_t}$
to solve this problem, and when $x_0$ is uniform on the unit sphere, it
achieves accuracy $\eps$ in
$O(1)\frac{\beta}{\eps}\log\frac{d}{\delta}$ steps with probability at least
$1-\delta$~\cite{GolubVa89,KuczynskiWo92}. The power method is precisely
projected
gradient descent on $\{x \mid \norm{x} = 1\}$, and its convergence guarantee
mirrors our Corollary~\ref{corr:gradient-pert} for data
perturbation. Indeed, when $b=0$ and $\lmin(A)<0$, the solution
to~\eqref{eqn:problem-cubic} is proportional to $\vmin$ and
data-perturbed gradient descent finds it.  Krylov subspace methods also
solve the eigenvector problem; this is the typical Lanczos
method~\cite[cf.][]{KuczynskiWo92,TrefethenBa97}. Indeed, our analysis of
Krylov subspace solutions to the more general problem~\eqref{eqn:problem-tr}
directly relies on this approach, and we recover its guarantees for the
subspace perturbation approach (Corollary~\ref{cor:tr-rand-joint}) with
$b=0$.

The literature on the eigenvector problem also identifies a 
\emph{gap-dependent} convergence regime, where the power and Lanczos 
methods converge linearly with rate depending on the eigen-gap
$\min_k\{\lambda\supind{k}(A) - \lambda\supind{1}(A)
\mid \lambda\supind{k}(A) > \lambda\supind{1}(A)\}$ of $A$.
The parallels here are less immediate;
our paper~\cite{CarmonDu19} shows that
gradient descent exhibits such a convergence regime
for problem~\eqref{eqn:problem-cubic}, 
though we defer
deeper investigation.

\subsection{Comparison of proof strategies}
We return briefly to our discussion of analysis strategies for
nonconvex optimization problems in 
Section~\ref{sec:concurrent-subsequent}.
Our analysis of Krylov subspace methods~\eqref{eqn:krylov-iterates}
leverages the fact that by 
definition they outperform all algorithms with iterates in  
the Krylov subspace~\eqref{eqn:krylov-subspace}; we argue 
\emph{some} (possibly impractical) algorithm does well, and hence so does 
the Krylov subspace method. This allows us to obtain strong convergence 
guarantees, essentially with no assumptions, but occludes the picture of 
how the Krylov subspace iterations behave. 

In contrast, our analysis of gradient descent paints a very detailed picture
of the dynamics of the iterates: their norm is monotonic, growing
exponentially until they are sufficiently far from all saddle points, and
subsequently they converge linearly towards the minimizer. The
iterates remain in a half-space whose only stationary point is the global
solution (see Figure~\ref{fig:simple-gradients}). This description gives
insight into the mechanisms by which nonconvexity affects convergence at
the expense of requiring a particular initialization
(Assumption~\ref{assu:init}) and tailored arguments that are non-trivial to
extend even for the trust-region
problem~\eqref{eqn:problem-tr}. Nonetheless, we hope this analysis may serve
as a prototype for the growing collection ``trajectory-based''
analyses~\cite{LiYu17,MaWaChCh19}.

\subsection{Additional results}
We conclude by briefly mentioning a few results in our original 
works~\cite{CarmonDu19,CarmonDu18} that we omit for brevity. 
In paper~\cite{CarmonDu19}, we consider only gradient descent and provide 
additional convergence guarantees that depend on the eigen-gap of $A$,
as well as giving a line-search procedure within gradient descent.
Our paper~\cite{CarmonDu18} focuses on the Krylov 
subspace solutions; in addition to the results we describe here, we show a 
matching lower proving the sharpness of our analysis for these methods 
and---by a resisting oracle argument---their optimality compared to any 
deterministic algorithm operating sufficiently high dimension.

\arxiv{
  \section*{Acknowledgment}
  YC and JCD were
  partially supported by the SAIL-Toyota Center for AI Research and the
  Office of Naval Research award N00014-19-2288. YC was partially
  supported by the Stanford Graduate Fellowship and the Numerical
  Technologies Fellowship. JCD was partially supported by the National
  Science Foundation award NSF-CAREER-1553086.
  
  \newpage
  \bibliographystyle{abbrvnat}
  \setlength{\bibsep}{3pt}

  \newpage
}

\appendix

\section{Computing Krylov subspace solutions}
\label{sec:lanczos}
Setting $A_\lambda = A + \lambda I$, generic instances of
problems~\eqref{eqn:problem-tr} and~\eqref{eqn:problem-cubic} can be
globally optimized~\cite{ConnGoTo00,CartisGoTo11} via Newton's method to
find the roots (respectively) of the one-dimensional equations
\begin{equation}
  \label{eq:lambda-search}
  \norm{A_{\lambda}^{-1}b} = R, ~
  \lambda  > \hinge{-\lmin}
  ~~~ \mbox{and} ~~~
  \norm{A_\lambda^{-1}b} = \lambda/\rho,
  ~
  \lambda \ge \hinge{-\lmin}.
\end{equation}
For high-dimensional problems where linear system solves $A_\lambda^{-1} b$
become expensive,
a general approach to obtaining approximate solutions is to 
constrain the domain to a linear subspace $\mc{Q}_t \subset \R^d$ of 
dimension $t \ll d$. Let $Q_t \in \R^{d\times t}$ be an orthogonal basis for 
$\mc{Q}_t$ ($Q_t^T Q_t = I$). Finding the global minimizer in $\mc{Q}_t$ 
is then equivalent to solving
\begin{equation*}
  \tilde{x}_t = \argmin_{y \in \R^t}
  \Big\{\half y^T Q_t^T A Q_t y + (Q_t^T b)^T y
  + \regpenalty(\norm{y})\Big \}
\end{equation*}
for $\regpenalty(r) = \infty \cdot \I(r \le R)$ for  
problem~\eqref{eqn:problem-tr} and $\regpenalty(r) = \frac{\rho}{3} r^3$ 
for~\eqref{eqn:problem-cubic}, then setting $x_t = Q_t \tilde{x}_t$.
For sufficiently large $d$, the time to solve such problems is
dominated by the $t$ matrix-vector products required to construct 
$Q_t^T A Q_t$.

Choosing the Krylov subspaces $\mc{Q}_t = \Krylov$ offers a significant
efficiency boost: we can construct a basis $Q_t$ for which
$Q_t^T A Q_t$ is tridiagonal using the Lanczos process~\cite[Part
  VI]{TrefethenBa97}, which beginning from
$q_1 = b / \norm{b}, q_0 = 0$ recurses
\begin{equation*}
  \alpha_t = q_t^T A q_t
  ~,~
  q'_{t+1} = A q_t - \alpha_t q_t - \beta_t q_{t-1}
  ~,~
  \beta_{t+1} = \norms{q'_{t+1}}
  ~,~
  q_{t+1} = q'_{t+1}/\norms{q'_{t+1}}.
\end{equation*}
The vectors $q_1, \ldots, q_t$ give the columns of $Q_t$ while $\alpha_1,
\ldots, \alpha_t$ and $\beta_2, \ldots, \beta_t$, respectively, give the
diagonal and off-diagonal elements of the symmetric tridiagonal matrix
$\tilde{A} = Q_t^T A Q_t$; this makes solving
equations~\eqref{eq:lambda-search} easy.  One straightforward approach is to
compute the eigenvalues of $\tilde{A}$, which for a $t \times t$ symmetric
tridiagonal matrix takes $O(t\log t)$ time~\cite{CoakleyRo13}. A more
efficient and practical approach is to iteratively solve systems of the form
$\tilde{A}_{\lambda}x = -Q_t^T b$ and update $\lambda$ using Newton
steps~\cite[Ch.~7.3.3]{CartisGoTo11,ConnGoTo00}. Every tridiagonal system
solution takes time $O(t)$, and the Newton steps are
linearly convergent (with local quadratic
convergence). In our experience 20
Newton steps generally suffice to reach machine precision, and so the
computational cost is essentially linear in $t$.
To avoid keeping $Q_t$ in memory (if $t\cdot d$ 
storage is too demanding), one may run the Lanczos process twice, once to
find $\tilde{x}$ and again to find $x=Q_t \tilde{x}$.

The Lanczos process produces the same result as Gram-Schmidt 
orthonormalization of the vectors $[b, Ab, \ldots, A^{t-1}b]$ but 
uses the 
special structure of the matrix to avoid computing
structurally zero inner products. When run for many iterations, the Lanczos 
process is unstable~\cite{TrefethenBa97}, but in
our setting we usually seek low to moderate accuracy solutions and will 
usually stop at $t < 100$, for which Lanczos is reasonably numerically
stable with 
floating point arithmetic even when $d$ is large.

\subsection{Computing joint Krylov subspace 
solutions}\label{sub:block-lanczos}

To solve equations~\eqref{eq:lambda-search} 
in subspaces of the form
\begin{equation*}
\Krylov[mt][A, \{v_1, \ldots, v_m\}] \defeq \mathrm{span}\{A^j 
v_i\}_{i\in\{1,\ldots, m\},j\in\{0,\ldots,t-1\}}
\end{equation*}
we may use the block Lanczos method~\cite{CullumDo74,Golub77}, a 
natural generalization of 
the Lanczos method that creates an orthonormal basis for the subspace 
$\Krylov[mt][A, \{v_1, \ldots, v_m\}]$ in which $A$ has a block tridiagonal 
form. Overloading the notation defined above so that now $q_t \in 
\R^{d\times m}$ and $\alpha_t, \beta_t \in \R^{m\times m}$ are matrices, the block 
Lanczos recursion is
\begin{equation*}
  \alpha_t = q_t^T A q_t
  ~,~
  q'_{t+1} = A q_t - q_t \alpha_t -  q_{t-1} \beta_t^T
  ~,~
  (q_{t+1}, \beta_{t+1}) = \mathrm{QR}(q'_{t+1}).
\end{equation*}
where $\mathrm{QR}$ is the QR decomposition,
and the initial conditions are that $q_1$ is an 
orthonormalized version of $[v_1, \ldots, v_m]$ and $q_0=0$. The matrix 
$\tilde{A} = Q_t^T A Q_t$ is now block tridiagonal, with the diagonal and 
sub-diagonal blocks given by $\{\alpha_i\}_{i\in\{1,\ldots,t\}}$ and 
$\{\beta_i\}_{i\in\{2,\ldots,t\}}$ respectively. Since the $\beta$ matrices 
are upper diagonal, $\tilde{A}$ is a symmetric banded matrix with $m$ 
non-zero sub-diagonal bands; such matrices admit fast Cholesky 
decompositions (in time linear in $m^2 t$), and consequently the Newton 
method for the system~\eqref{eq:lambda-search} is efficient
when $m$ is small (e.g.\ $m = 2$).

\section{Proof of Lemma~\ref{lemma:monotone}}
\label{app:prel}
Throughout this section, we let $f(x) = \f(x) + \frac{\rho}{3} \norm{x}^3$
for short.
Before proving Lemma~\ref{lemma:monotone}, we state and prove
two technical lemmas (see Sec.~\ref{sec:finally-proof-monotone-weak}
for the proof conditional on these lemmas).
For the first lemma,
let $\chi \in \R^d$ satisfy $\chi\supind{1} \le \chi\supind{2}
\le \ldots \le \chi\supind{d}$, let
$\nu_t$ be a nonnegative and nondecreasing sequence, $0 \le \nu_1 \le
\nu_2 \le \ldots$, and consider the process
\begin{equation}\label{eq:zt-def}
  z_{t}\supind{i}=(1-\chi\supind{i}-\nu_{t-1})z_{t-1}\supind{i}+1.
\end{equation}
Additionally, assume
$1-\chi\supind{i}-\nu_{t-1}\geq0$ for all $i$ and $t$.
\begin{lemma}
  \label{lem:ordering}
  Let $z_{0}\supind{i}=c_{0}\geq0$ for every $i \in [d]$. Then for
  every $t \in \N$ and $j \in [d]$, the following holds:
  \begin{enumerate}[label=(\roman*)]
  \item \label{item:future-signs}
    If $z_{t}^{\left(j\right)}\leq z_{t-1}^{\left(j\right)}$, then also
    $z_{t'}^{\left(j\right)}\leq z_{t'-1}^{\left(j\right)}$ for every
    $t'>t$.
  \item \label{item:sign-ratio} If
    $z_{t}^{\left(j\right)}\geq z_{t-1}^{\left(j\right)}$, then
    $z_{t}^{\left(j\right)}/z_{t+1}^{\left(j\right)}\geq
    z_{t}\supind{i}/z_{t+1}\supind{i}$ for every $i\le j$.
  \item \label{item:bigger-signs}
    If $z_{t+1}\supind{i}\leq z_{t}\supind{i}$,
    then $z_{t+1}\supind{j}\leq z_{t}\supind{j}$
    for every $j\geq i$.
  \end{enumerate}
\end{lemma}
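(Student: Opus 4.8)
The plan is to reduce everything to the increments $\delta_t^{(i)} := z_{t+1}^{(i)} - z_t^{(i)}$. Write $\mu_t^{(i)} := \chi^{(i)} + \nu_t$, so that the recursion reads $z_t^{(i)} = (1-\mu_{t-1}^{(i)})z_{t-1}^{(i)} + 1$ with $0 \le 1-\mu_{t-1}^{(i)} \le 1$. Two facts are immediate and get used throughout: $z_t^{(i)} \ge 1$ for every $t \ge 1$, and $z_t^{(i)}$ is nonincreasing in $i$ for each fixed $t$ (induction on $t$: the base is $z_0^{(i)}\equiv c_0$, and the update is $1$ plus a product of two nonnegative, nonincreasing-in-$i$ quantities). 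The recursion also gives the increment identities
\begin{equation*}
  \delta_t^{(i)} = (1-\mu_{t-1}^{(i)})\,\delta_{t-1}^{(i)} - (\nu_t-\nu_{t-1})\,z_t^{(i)}
  = 1 - \mu_t^{(i)}\,z_t^{(i)}.
\end{equation*}
Part~(i) follows at once: since $1-\mu_{t-1}^{(i)}\ge 0$, $\nu_t-\nu_{t-1}\ge 0$ and $z_t^{(i)}\ge 0$, the first identity shows $\delta_{t-1}^{(j)}\le 0\Rightarrow\delta_t^{(j)}\le 0$, and iterating this yields the claim. A structural consequence I will lean on: for each $j$ the set $\{s:\delta_s^{(j)}\le 0\}$ is upward closed, so the sequence $z_0^{(j)},z_1^{(j)},\dots$ increases strictly up to some time $s_0$ and is nonincreasing afterwards; in particular $z_t^{(j)} = \max_{0\le s\le t} z_s^{(j)}$ whenever $\delta_{t-1}^{(j)} > 0$.

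For part~(ii), a one-line rearrangement of the two increment identities shows that the asserted ratio bound $z_t^{(j)}/z_{t+1}^{(j)} \ge z_t^{(i)}/z_{t+1}^{(i)}$ (the denominators are $\ge 1$) is equivalent to $z_t^{(i)} - z_t^{(j)} \le (\chi^{(j)}-\chi^{(i)})\,z_t^{(i)} z_t^{(j)}$ for $i\le j$. I will prove the stronger \emph{unconditional} inequality $z_t^{(i)} - z_t^{(j)} \le (\chi^{(j)}-\chi^{(i)})\,M_t^{(j)} z_t^{(i)}$, where $M_t^{(j)} := \max_{0\le s\le t} z_s^{(j)}$, by induction on $t$. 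It holds at $t=0$ (left side $=0$), and the step uses
\begin{equation*}
  z_{t+1}^{(i)} - z_{t+1}^{(j)} = (1-\mu_t^{(i)})\,(z_t^{(i)} - z_t^{(j)}) + (\chi^{(j)}-\chi^{(i)})\,z_t^{(j)}
\end{equation*}
together with the elementary bounds $M_{t+1}^{(j)}\ge M_t^{(j)}$ and $M_{t+1}^{(j)}\ge z_t^{(j)}$, which make both error terms move in the favorable direction. Finally, the hypothesis $z_t^{(j)}\ge z_{t-1}^{(j)}$, via the structural remark from part~(i), places the sequence $z_\cdot^{(j)}$ in its increasing phase, so $M_t^{(j)} = z_t^{(j)}$ and the reduced inequality---hence part~(ii)---follows.

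Part~(iii) then comes out of (i) and (ii) by contraposition: for $t\ge 1$, if $\delta_t^{(j)} > 0$ for some $j\ge i$, then part~(i) forces $\delta_{t-1}^{(j)} > 0$, so part~(ii) applies and gives $z_{t+1}^{(i)}/z_t^{(i)} \ge z_{t+1}^{(j)}/z_t^{(j)} > 1$, i.e.\ $\delta_t^{(i)} > 0$; contrapositively $\delta_t^{(i)}\le 0$ implies $\delta_t^{(j)}\le 0$. Note this invokes part~(ii) only in the strict-increase regime. The case $t=0$ is direct: $z_1^{(i)}\le z_0^{(i)}=c_0$ is equivalent to $c_0(\chi^{(i)}+\nu_0)\ge 1$, which is monotone in $\chi^{(i)}$. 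The one genuinely delicate point---my expected main obstacle---sits entirely inside part~(ii): verifying that the hypothesis really does place the sequence $z_\cdot^{(j)}$ in its nondecreasing phase (so that $M_t^{(j)} = z_t^{(j)}$) requires handling the boundary case $z_t^{(j)} = z_{t-1}^{(j)}$ carefully and relying on the unimodality established in part~(i); the remaining manipulations are routine once the increment identities are written down.
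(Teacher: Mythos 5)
Your proof is correct in the regime that matters and follows the same core architecture as the paper's: part~(i) from a signed increment identity, part~(ii) by reducing the ratio bound to a product-form inequality and proving that by induction, and part~(iii) by combining (i) and (ii). The one genuine difference is the handling of the induction in~(ii). The paper carries the hypothesis through the induction---it first deduces (via the contrapositive of~(i)) that $z_\tau^{(j)}$ is nondecreasing on $[0,t]$ and uses $z_{\tau-1}^{(j)}/z_\tau^{(j)}\le 1$ inside the inductive step; you instead prove an unconditional relaxation with $M_t^{(j)}=\max_{s\le t}z_s^{(j)}$ in place of $z_t^{(j)}$, and only invoke the hypothesis at the very end to collapse $M_t^{(j)}$ to $z_t^{(j)}$. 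This decoupling is a nice touch---it cleanly isolates where the monotonicity hypothesis actually enters---but it is a cosmetic variant rather than a different route.

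On the boundary case you flag: you are right to be suspicious, and it is worth saying plainly that the issue cannot be ``handled carefully,'' because part~(ii) as literally stated (with $\ge$) is false. Take $d=2$, $\nu_t\equiv 0$, $\chi^{(1)}=\tfrac12$, $\chi^{(2)}=1$, $c_0=10$. Then $z^{(2)}_t\equiv 1$ for $t\ge 1$, so $z^{(2)}_2\ge z^{(2)}_1$ holds with equality, yet $z^{(1)}_2/z^{(1)}_3=4/3>1=z^{(2)}_2/z^{(2)}_3$, contradicting~(ii). In that regime the sequence $z^{(2)}_\cdot$ decreased strictly at $t=0$ and then flatlined, so $M_2^{(2)}=10\ne 1=z_2^{(2)}$; neither your $M_t^{(j)}=z_t^{(j)}$ step nor the paper's claim that $z_\tau^{(j)}$ is nondecreasing on $[0,t]$ survives equality. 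This is a small imprecision in the lemma statement shared by the paper's own proof. It is harmless for the paper because part~(iii) (and all downstream use) only invokes~(ii) under strict increase $z_t^{(j)}>z_{t-1}^{(j)}$, where the contrapositive of~(i) really does force strict increase on all of $[0,t]$; you correctly note this by observing that your proof of~(iii) only calls~(ii) in the strict-increase regime.
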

\begin{proof}
  For shorthand, we define
  $\delta_{t}\supind{i} \defeq \chi\supind{i}+\nu_{t}$. 
  
  We first establish part~\ref{item:future-signs} of the lemma. 
  By~\eqref{eq:zt-def}, we have
  \begin{alignat*}{1}
    z_{t+1}\supind{j} - z_{t}\supind{j}
    =(1-\delta_{t-1}^{\left(j\right)})(z_{t}^{\left(j\right)}-z_{t-1}^{\left(j\right)})- 
    (\delta_{t}^{\left(j\right)}-\delta_{t-1}^{\left(j\right)})z_{t}^{\left(j\right)}
  \end{alignat*}
  By our assumptions that $z_{0}\supind{j}\geq0$ and that
  $1-\delta_{t}\supind{j}\geq0$ for every $t$ we immediately have that
  $z_{t}\supind{j}\geq0$, and therefore also
  $(\delta_{t}\supind{j}-\delta_{t-1}\supind{j})z_{t}\supind{j} = 
  (\nu_{t}-\nu_{t-1})z_{t}\supind{j}\geq0$.
  We therefore conclude that
  \begin{equation*}
  z_{t+1}\supind{j}-z_{t}\supind{j}\leq (1-\delta_{t-1}\supind{j})  
  (z_{t}\supind{j}-z_{t-1}\supind{j} ) \le 0,
  \end{equation*}
  and induction gives part~\ref{item:future-signs}.
  
  To establish part~\ref{item:sign-ratio} of the lemma, first note
  that by the contrapositive of part~\ref{item:future-signs},
  $z_{t}\supind{j}\geq z_{t-1}\supind{j}$
  for some $t$ implies $z_{t'}\supind{j}\geq z_{t'-1}\supind{j}$
  for any $t'\leq t$. We prove by induction that
  \begin{equation}
    z_{t'}\supind{i}-z_{t'}\supind{j}
    \leq (\chi\supind{j}-\chi\supind{i}) z_{t'}\supind{i}z_{t'}\supind{j}
    \label{eqn:kappa-z-inequality}
  \end{equation}
  for any $i\leq j$ and $t'\leq t$. The basis of the induction is
  immediate from the assumption $z_{0}\supind{i}=z_{0}\supind{j}\ge0$.
  Assuming the property holds through time $t' - 1$ for $t' \le t$, we
  obtain
  \begin{alignat*}{1}
    \frac{z_{t'}\supind{i}-z_{t'}\supind{j}}{z_{t'}\supind{i}z_{t'}\supind{j}}
    & =\frac{(1-\delta_{t'-1}\supind{i})(z_{t'-1}\supind{i}-z_{t'-1}\supind{j})+
      (\delta_{t'-1}\supind{j}-\delta_{t'-1}\supind{i})z_{t'-1}\supind{j}}{
      z_{t'}\supind{i}z_{t'}\supind{j}} \\
    & \le \frac{(1 - \delta_{t'-1}\supind{i}) (\chi\supind{j} - \chi\supind{i})
      z_{t'-1}\supind{i} z_{t'-1}\supind{j}}{z_{t'}\supind{i} z_{t'}\supind{j}}
    = (\chi\supind{j} - \chi\supind{i}) \frac{z_{t'-1}\supind{j}}{
      z_{t'}\supind{j}} \le \chi\supind{j} - \chi\supind{i}
  \end{alignat*}
  where the first inequality uses inequality~\eqref{eqn:kappa-z-inequality} 
  (assumed by induction)
  and the second uses $z_{t'-1}\supind{j} \le
  z_{t'}\supind{j}$ for any $t'\le t$, as argued above.
  With the bound
  $z_{t}\supind{i}-z_{t}\supind{j}\leq(\chi\supind{j}-\chi\supind{i})
  z_{t}\supind{i}z_{t}\supind{j}$
  in place, we may finish the proof of part~\ref{item:sign-ratio} by noting 
  that
  \begin{alignat*}{1}
    \frac{z_{t}\supind{j}}{z_{t+1}\supind{j}}-\frac{z_{t}\supind{i}}{z_{t+1}\supind{i}}
    & =\frac{z_{t+1}\supind{i}z_{t}\supind{j}-z_{t+1}\supind{j}z_{t}\supind{i}}{z_{t+1}\supind{j}z_{t+1}\supind{i}}
    =\frac{(\chi\supind{j}-\chi\supind{i})z_{t}\supind{i} 
    z_{t}\supind{j}-(z_{t}\supind{i}-z_{t}\supind{j})}{z_{t+1}\supind{j} 
    z_{t+1}\supind{i}}\geq0.
  \end{alignat*}

  Lastly, we prove part~\ref{item:bigger-signs}. If
  $z_{t}\supind{j}\leq z_{t-1}\supind{j}$ then we have
  $z_{t+1}\supind{j}\leq z_{t}\supind{j}$ by
  part~\ref{item:future-signs}. Otherwise we have
  $z_{t}\supind{j}\ge z_{t-1}\supind{j}$, and so
  $z_{t}\supind{j}/z_{t+1}\supind{j}\geq z_{t}\supind{i}/z_{t+1}\supind{i}$ by
  part~\ref{item:sign-ratio}. As $z_{t+1}\supind{i}\leq z_{t}\supind{i}$,
  this implies
  $z_{t}\supind{j} / z_{t+1}\supind{j} \geq
  z_{t}\supind{i} / z_{t+1}\supind{i} \geq 1$
  and therefore $z_{t+1}\supind{j}\leq z_{t}\supind{j}$
  as required.
\end{proof}

Our second technical lemma provides a lower bound on certain
inner products in the gradient descent iterations.
In the lemma, we recall the definition~\eqref{eq:R-upper-def} of
$\Rupper$.
\begin{lemma}
  \label{lemma:x-transpose-A-nice}
  Assume that $\norm{x_\tau}$ is non-decreasing in $\tau$ for
  $\tau \le t$, that $\norm{x_t} \le \Rupper$, and that
  $x_t^{\T}\nabla f(x_t) \le 0$. Then $x_t^{\T}A\nabla f(x_t) \ge \beta 
  x_t^{\T}\nabla 
  f(x_t)$.
\end{lemma}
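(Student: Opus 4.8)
The plan is to pass to the eigenbasis of $A$ and reparametrize the iterates so that Lemma~\ref{lem:ordering} applies directly. Writing the gradient step~\eqref{eq:grad-iter} as $\nabla f(x_t) = \tfrac1\eta(x_t - x_{t+1})$ and recalling that Assumption~\ref{assu:init} gives $x_0 = -r\,b/\norm{b}$, for each coordinate $i$ with $b\supind{i}\neq 0$ I would set $z_t\supind{i} := -x_t\supind{i}/(\eta\, b\supind{i})$; coordinates with $b\supind{i}=0$ satisfy $x_t\supind{i}\equiv 0$ and contribute nothing to $x_t^{\T}\nabla f(x_t)$ or $x_t^{\T}A\nabla f(x_t)$, so I discard them. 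Writing~\eqref{eq:grad-iter} in the eigenbasis and inducting on $t$ shows that $z_t$ satisfies exactly the recursion~\eqref{eq:zt-def} with $\chi\supind{i} = \eta\lambda\supind{i}(A)$ (nondecreasing in $i$), $\nu_\tau = \eta\rho\norm{x_\tau}$ (nonnegative, and nondecreasing for $\tau\le t$ by hypothesis), and common initial value $z_0\supind{i} = r/(\eta\norm{b})\ge 0$; the required sign condition $1-\chi\supind{i}-\nu_{\tau-1}\ge 0$ holds because $\norm{x_\tau}\le\norm{x_t}\le\Rupper$ together with Assumption~\ref{assu:step-size} gives $\eta\big(\lambda\supind{i}(A)+\rho\norm{x_\tau}\big)\le\eta(\beta+\rho\Rupper)\le\tfrac14$. (In passing this yields $z_t\supind{i}\ge 0$, equivalently $b\supind{i}x_t\supind{i}\le 0$.)

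Under this substitution $\nabla f\supind{i}(x_t) = b\supind{i}\big(z_{t+1}\supind{i}-z_t\supind{i}\big)$, so, writing $v_i := [b\supind{i}]^2 z_t\supind{i}\ge 0$ and $d_i := z_{t+1}\supind{i}-z_t\supind{i}$, one obtains the identities $x_t\supind{i}\nabla f\supind{i}(x_t) = -\eta\, v_i d_i$, whence $x_t^{\T}\nabla f(x_t) = -\eta\sum_i v_i d_i$ and
\begin{equation*}
  x_t^{\T}A\nabla f(x_t) - \beta\, x_t^{\T}\nabla f(x_t)
  = \eta\sum_i \big(\beta-\lambda\supind{i}(A)\big)\, v_i d_i .
\end{equation*}
Thus the lemma reduces to a purely scalar claim: if $\sum_i v_i d_i\ge 0$ (this is the hypothesis $x_t^{\T}\nabla f(x_t)\le 0$), then $\sum_i w_i v_i d_i\ge 0$, where $w_i := \beta-\lambda\supind{i}(A)\ge 0$ is \emph{nonincreasing} in $i$ and $v_i\ge 0$.

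To finish I would invoke part~\ref{item:bigger-signs} of Lemma~\ref{lem:ordering}: the set $\{i : d_i\le 0\} = \{i : z_{t+1}\supind{i}\le z_t\supind{i}\}$ is upward closed, so there is a threshold $i^\star$ with $d_i>0$ for $i<i^\star$ and $d_i\le 0$ for $i\ge i^\star$. Taking $W := w_{i^\star}\ge 0$ (and $W:=0$ if $i^\star>d$), monotonicity of $w$ gives $w_i\ge W$ whenever $d_i>0$ and $w_i\le W$ whenever $d_i\le 0$, so in either case $w_i v_i d_i\ge W v_i d_i$ since $v_i\ge 0$; summing, $\sum_i w_i v_i d_i\ge W\sum_i v_i d_i\ge 0$. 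The conceptual heart is this weighted-rearrangement step, but the part I expect to demand the most care is the bookkeeping in the first paragraph — checking that $z_t$ genuinely meets the hypotheses of Lemma~\ref{lem:ordering} for all indices entering its argument at time $t$ (through $z_{t+1}$), and cleanly dismissing the $b\supind{i}=0$ coordinates.
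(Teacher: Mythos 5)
Your proof is correct and follows essentially the same route as the paper: both pass to the $z_t^{(i)} = -x_t^{(i)}/(\eta b^{(i)})$ variables, verify the hypotheses of Lemma~\ref{lem:ordering}, invoke part~\ref{item:bigger-signs} to obtain a threshold index $i^\star$ separating the signs of $z_{t+1}^{(i)} - z_t^{(i)}$, and then bound the split sum by the eigenvalue at the threshold. Your ``affine-shift'' presentation $\sum_i w_i v_i d_i \ge W \sum_i v_i d_i$ is, after unwinding, literally the paper's inequality $x_t^T A \nabla f(x_t) \ge \lambda^{(i^\star)}\, x_t^T \nabla f(x_t)$; the final appeal to $W\ge 0$ and $\sum_i v_i d_i\ge 0$ matches the paper's use of $\lambda^{(i^\star)}\le\beta$ and $x_t^T\nabla f(x_t)\le 0$.
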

\begin{proof}
  If we define $z_t \supind{i}=x_t \supind{i}/(-\eta b\supind{i})$, then
  evidently
  \begin{equation*}
    z_{t + 1}\supind{i}
    = (1 - \underbrace{\eta\lambda\supind{i}(A)}_{
      \eqdef\chi\supind{i}}
    -\underbrace{\eta\rho\norm{ x_{t}}}_{
      \eqdef \nu_{t}}) z_{t}\supind{i} + 1.
  \end{equation*}
  We verify that $z_t\supind{i}$ satisfies
  the conditions of Lemma~\ref{lem:ordering}
  (if $b\supind{i} = 0$ then Assumption~\ref{assu:init} means that 
  $x\supind{i}_t=0$ for all $t$ so you may ignore it):
  \begin{enumerate}[label=(\roman*), leftmargin=*]
  \item By definition $\chi\supind{i}$ are increasing in $i$, and
    $\nu_{0}\leq\nu_{1}\leq\cdots\leq\nu_{t}$ by our assumption
    that $\norm{ x_\tau} $ is non-decreasing for $\tau \le t$.
  \item As
    $\eta\leq1/\left(\beta+\rho \Rupper\right)$
    for $\tau \leq t$, we have that $\chi\supind{i}+\nu_\tau \leq1$ for
    $\tau \le t$ and $i \in [d]$.
  \item As $x_{0}=-r b/\norm{ b}$, $z_{0}\supind{i}=r/(\eta\norm{ b}) \geq0$
    for every $i$.
  \end{enumerate}
  We may therefore apply Lemma~\ref{lem:ordering},
  part~\ref{item:bigger-signs} to conclude that
  $z_{t}\supind{i}-z_{t + 1}\supind{i}\geq0$ implies
  $z_{t}\supind{j}-z_{t + 1}\supind{j}\geq0$ for every $j\geq i$. Since
  $z_{t}\supind{i}\geq0$ for every $i$,
  \begin{equation*}
  \mathrm{sign}\left(x_{t}\supind{i}\left(x_{t}\supind{i}-x_{t + 1}\supind{i}\right)\right)=\mathrm{sign}\left(z_{t}\supind{i}\left(z_{t}\supind{i}-z_{t + 1}\supind{i}\right)\right)=\mathrm{sign}\left(z_{t}\supind{i}-z_{t + 1}\supind{i}\right),
  \end{equation*}
  and there must thus exist some $i^{*}\in[d]$ such that
  $x_{t}\supind{i}(x_{t}\supind{i}-x_{t + 1}\supind{i})\leq0$ for every
  $i\leq i^{*}$ and
  $x_{t}\supind{i}(x_{t}\supind{i}-x_{t + 1}\supind{i})\geq0$ for every
  $i>i^{*}$. We thus have (by expanding
  in the eigenbasis of $A$) that
  \begin{alignat*}{1}
    & x_{t}^{\T}A\grad f\left(x_{t}\right)  
    =\frac{1}{\eta}\sum_{i=1}^{i^{*}}\lambda\supind{i}\left(A\right)x_{t}\supind{i}\left(x_{t}\supind{i}-x_{t
     + 
    1}\supind{i}\right)+\frac{1}{\eta}\sum_{i=i^{*}+1}^{d}\lambda\supind{i}\left(A\right)x_{t}\supind{i}\left(x_{t}\supind{i}-x_{t
     + 1}\supind{i}\right)\\
    & \quad 
    \geq\lambda^{\left(i^{*}\right)}\left(A\right)\frac{1}{\eta}\sum_{i=1}^{i^{*}}x_{t}\supind{i}\left(x_{t}\supind{i}-x_{t
     + 
    1}\supind{i}\right)+\lambda^{\left(i^{*}+1\right)}\left(A\right)\frac{1}{\eta}\sum_{i=i^{*}+1}^{d}x_{t}\supind{i}\left(x_{t}\supind{i}-x_{t
     + 1}\supind{i}\right)\\
    & \quad 
    \geq\lambda^{\left(i^{*}\right)}\left(A\right)\frac{1}{\eta}\sum_{i=1}^{d}x_{t}\supind{i}\left(x_{t}\supind{i}-x_{t
     + 1}\supind{i}\right)=\lambda^{\left(i^{*}\right)}\left(A\right)x_{t}^{\T}\grad 
    f\left(x_{t}\right)\geq \beta x_{t}^{\T}\grad f\left(x_{t}\right)
  \end{alignat*}
  where the first two inequalities use the fact the $\lambda\supind{i}$ is
  non-decreasing with $i$, and the last inequality uses our assumption that
  $x_{t}^{\T}\nabla f\left(x_{t}\right)\leq0$ along with
  $\lambda^{\left(d\right)}\left(A\right)\leq \beta$.  
\end{proof}

\subsection{Proof of Lemma \ref{lemma:monotone}}
\label{sec:finally-proof-monotone-weak}

\newcommand{\Rlow}{R_{\rm low}}
\newcommand{\hessremain}{\Delta}

By definition of the gradient descent iteration~\eqref{eq:grad-iter},
\begin{equation}
  \label{eqn:norm-one-step}
  \norm{ x_{t+1}}^{2}=\norm{ x_{t}}^{2}-2\eta x_{t}^{\T}\grad f\left(x_{t}\right)+\eta^{2}\norm{ \grad f\left(x_{t}\right)}^{2},
\end{equation}
and therefore if we can show that $x_{t}^{\T}\grad f\left(x_{t}\right)\leq0$
for all $t$, the lemma holds.  We give a proof by induction.  The basis of
the induction $x_{0}^{\T}\grad f\left(x_{0}\right)\leq0$ is immediate as
$r \mapsto f(-r b / \norm{b})$ is decreasing until
$r = \Rcauchy$
(recall the definition~\eqref{eq:Rc-def}), and $x_0^T \nabla f(x_0) = 0$ for
$r \in \{0, \Rcauchy\}$.  Our induction assumption is that
$x_{t'-1}^{\T}\grad f\left(x_{t'-1}\right)\leq0$ (and hence also
$\norm{ x_{t'}} \geq\norm{ x_{t'-1}} $) for  $t'\leq t$ and we wish to show
that $x_{t}^{\T}\grad f\left(x_{t}\right)\leq0$.
Note that
\begin{equation*}
  x^{\T}\grad f\left(x\right)=x^{\T}Ax+\rho\norm{ x}^{3}+b^{\T}x\geq\rho\norm{ x}^{3}
  + \lmin \norm{ x}^{2}-\norm{ b} \norm{ x} 
\end{equation*}
and therefore $x^{\T}\grad f(x)>0$ for every
$\norm{x} > \Rlow\defeq
\frac{-\lmin}{2\rho}+\big[(\frac{\lmin}{2\rho})^{2} +
  \frac{\norms{b} }{\rho}\big]^{1/2}$.
Therefore, our induction assumption also implies
$\norm{ x_{t'-1}} \leq \Rlow\leq \Rupper$ for  $t'\leq t$.

Using that $\nabla^2 f$ is $2\rho$-Lipschitz,
a Taylor
expansion immediately
implies~\cite[Lemma~1]{NesterovPo06} that for all vectors $\Delta$, we have
\begin{equation}
  \norm{\nabla f(x + \Delta)
    - (\nabla f(x) + \nabla^2 f(x) \Delta)} \le
  \rho \norm{\Delta}^2.
  \label{eqn:lip-hess-bound}
\end{equation}
Thus, if we define 
$\hessremain_t \defeq \frac{1}{\eta^2} [\nabla f(x_t) - (\nabla f(x_{t-1}) -
  \eta \nabla^2 f(x_{t-1}) \nabla f(x_{t-1}))]$, we have
$\norm{\hessremain_t} \le \rho \norm{\nabla f(x_{t-1})}^2$, and using
the iteration $x_t = x_{t-  1} - \eta \nabla f(x_{t-1})$ yields
\begin{align}
  x_{t}^{\T}\grad f\left(x_{t}\right)
  & =x_{t-1}^{\T}\grad f\left(x_{t-1}\right)
  -\eta\norm{ \grad f\left(x_{t-1}\right)}^2
  - \eta \underbrace{x_{t-1}^{\T}\grad^{2}f\left(x_{t-1}\right)
    \grad f\left(x_{t-1}\right)}_{\eqdef \mc{T}_1} \nonumber \\
  & \qquad+\eta^{2}
  \underbrace{\grad f\left(x_{t-1}\right)^T 
    \grad^{2}f\left(x_{t-1}\right)\grad f\left(x_{t-1}\right)}_{\eqdef \mc{T}_2}
  +\eta^{2} \underbrace{x_{t}^{\T}\hessremain_t}_{\eqdef \mc{T}_3}.
  \label{eqn:three-fun-terms}
\end{align}
We bound each of the terms $\mc{T}_i$ in turn. We have that
\begin{alignat*}{1}
  \mc{T}_1 &= x_{t-1}^{\T}\grad^{2}f\left(x_{t-1}\right)\grad 
  f\left(x_{t-1}\right)=x_{t-1}^{\T}A\grad f\left(x_{t-1}\right)+2\rho\norm{ 
    x_{t-1}} 
  x_{t-1}^{\T}\grad f\left(x_{t-1}\right) \\
  & \qquad \ge (\beta + 2\rho \norm{x_{t-1}})x_{t-1}^{\T}\grad 
  f(x_{t-1}) \ge (\beta + 2\rho \Rupper)x_{t-1}^{\T}\grad 
  f(x_{t-1}),
  \label{eqn:x-transpose-A-expansion}
\end{alignat*}
where both inequalities follow from the induction assumption; the first is 
Lemma~\ref{lemma:x-transpose-A-nice} and the second is due to 
$\norm{x_{t-1}} \le \Rupper$ and $x_{t-1}^{\T}\grad 
f(x_{t-1}) \le 0 $.

Treating the second order term $\mc{T}_2$, we obtain that
\begin{alignat*}{1}
  \mc{T}_2  \le \opnorm{\hess f(x_{t-1})} 
  \norm{\grad f(x_{t-1})}^2 \leq\left(\beta+2\rho \Rupper\right)
  \norm{ \grad f\left(x_{t-1}\right)}^{2},
\end{alignat*}
and, by the Lipschitz bound~\eqref{eqn:lip-hess-bound},
the remainder term $\mc{T}_3$ satisfies
\begin{align*}
  \mc{T}_3 = x_{t}^{\T} \hessremain_t &
  \leq\norm{ x_{t}} \norm{ r} \leq\rho\norm{ x_{t}}
  \norm{ \grad f\left(x_{t-1}\right)}^{2}
  \leq \rho\norm{ x_{t-1}-\eta\grad f\left(x_{t-1}\right)} \norm{ \grad 
    f\left(x_{t-1}\right)}^{2}\\
  & \leq\rho\norm{ x_{t-1}} \norm{ \grad f\left(x_{t-1}\right)}^{2}+\rho\eta\norm{ \grad f\left(x_{t-1}\right)}^{3}.
\end{align*}
Using that $\norm{\nabla f(x)} = \norm{\nabla f(x) - \nabla f(\scu)} \le
(\beta + 2 \Rupper) \norm{x - \scu}
\le \Rupper(\beta + 2 \rho \Rupper)$ for $\norm{x}
\le \Rupper$ and that $\eta\leq1/2\left(\beta+2\rho \Rupper\right)$, our inductive
assumption that $\norm{x_{t-1}} \le \Rupper$ thus guarantees that $\mc{T}_3 \le
2 \rho \Rupper \norm{\nabla f(x_{t-1})}^2$.  Combining our bounds on the terms
$\mc{T}_i$ in expression~\eqref{eqn:three-fun-terms}, we have that
\begin{equation*}
  x_{t}^{\T}\grad f\left(x_{t}\right)
  \leq\left(1-\eta\left(\beta +2\rho \Rupper\right)\right)x_{t-1}^{\T}\grad f\left(x_{t-1}\right)
  -\left(\eta - \eta^{2}(\beta+4\rho \Rupper)\right) \norm{ \grad f(x_{t-1})}^{2}.
\end{equation*}
Using $\eta\leq1/\left(\beta+4\rho \Rupper\right)$ shows that
$x_{t}^{\T}\grad f\left(x_{t}\right)\leq0$, completing our induction.
By the expansion~\eqref{eqn:norm-one-step}, we have
$\norm{x_t} \le \norm{x_{t+1}}$ as desired, and that
$x_t^T \grad f(x_t) \le 0$ for all $t$ guarantees that $\norm{x_t} \le \Rlow 
\le \Rupper$.

It remains to argue that $\lim_{t\to \infty}\norm{x_t}$ (which necessarily 
exists) is at most $\norm{\scu}$. To see this note that $x_t$ converges to 
a stationary point $\hat{x}$: the proof of Proposition~\ref{prop:converge} 
 shows this using only the bound $\norm{x_t} \le \Rupper$ and without  
the assumption $b\supind{1}\ne 0$. By 
Proposition~\ref{prop:characterization} every stationary point can have 
norm at most $\norm{\scu}$, and consequently we have that $\norm{x_t} 
\le \norm{\hat{x}} \le \norm{\scu}$ for all $t$. Finally, we have that $f$ is 
$(\beta+2\rs)$-smooth on a ball of radius $\norm{\scu}$, since 
$\norm{\hess f(x)}\le \beta + 2\rho\norm{x}$. 

\section{Proof of Theorem~\ref{theorem:gradient-descent-cubic}}
\label{sec:proof-gd-cubic}
Throughout the proof, let $f(x) = \half x^T A x + b^T x + \frac{\rho}{3}
\norm{x}^3$ for short.  A number of the steps of the proof of
Theorem~\ref{theorem:gradient-descent-cubic} involve technical lemmas whose
proofs we defer. In all lemma statements, we tacitly
let Assumptions~\ref{assu:init} and~\ref{assu:step-size} hold as in the
theorem. We assume $\eps \le \half
\beta \norm{\sol}^2 + \rho \norm{\sol}^3$ w.l.o.g., as $f$ is $\beta + 2 \rho
\norm{\sol}$ smooth on the set $\{x : \norm{x} \le \norm{\sol}\}$ and
therefore $f(x_0) \le f(\sol) + \eps$ for any $\eps \ge \half \beta
\norm{\sol}^2 + \rho \norm{\sol}^3$.
We divide the proof of Theorem~\ref{theorem:gradient-descent-cubic} into two
main steps: in Section~\ref{sec:linear-convergence-part} we prove
the linear convergence
case of the theorem, and in
Section~\ref{sec:epsilon-convergence-part} we prove the
sublinear convergence result.

\subsection{Linear convergence and exponential growth}
\label{sec:linear-convergence-part}

We first prove that $f(x_t) \le f(\sol) + \eps$ for $t \ge \frac{1}{\eta (\rho
  \norm{\sol} + \lmin)}(\tgrow + \tconv(\eps))$. We begin with two lemmas
that provide regimes in which $x_t$ converges to the solution $\sol$ linearly.

\begin{lemma}
  \label{lem:lin-converge}
  For each $t > 0$, we have
  \begin{equation*}
    \norm{x_t - \sol}^2 \le 
    \left(1-\eta \left[\rho\norm{x_t}-\left(-\lmin - 
      \frac{\rs + \lmin}{2}\right)\right]\right)
    \norm{x_{t-1}-\sol}^2
  \end{equation*}
\end{lemma}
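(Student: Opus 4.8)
The plan is to expand $\norm{x_t-\sol}^2$ exactly, collapse the expression using the stationarity of $\sol$ from Proposition~\ref{prop:characterization}, and then estimate what remains. Write $\As = A + \rs I$, so that $\As\sol = -b$, and set $e = x_{t-1}-\sol$ and $\delta_t = \rho(\norm{\sol}-\norm{x_{t-1}})$, which is nonnegative since $\norm{x_{t-1}}\le\norm{\sol}$ by Lemma~\ref{lemma:monotone}. First I would put the iteration in error form: since $\grad f(x_{t-1}) = (A+\rho\norm{x_{t-1}}I)x_{t-1}+b = \As e - \delta_t x_{t-1}$, the step becomes $x_t-\sol = (I-\eta(A+\rho\norm{x_{t-1}}I))e + \eta\delta_t\sol$. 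Squaring, substituting $\As\sol=-b$ wherever it appears, and using the two identities $x_{t-1}^\T\grad f(x_{t-1}) = x_{t-1}^\T\As e - \delta_t\norm{x_{t-1}}^2$ and $x_{t-1}^\T e = \half\norm{e}^2 - \tfrac{\delta_t}{2\rho}(\norm{\sol}+\norm{x_{t-1}})$, I claim almost everything cancels and one is left with the exact identity
\[
  \norm{x_t-\sol}^2 = (1+\eta\delta_t)\norm{e}^2 - 2\eta\,e^\T\As e - \tfrac{\eta\delta_t^2}{\rho}\big(\norm{\sol}+\norm{x_{t-1}}\big) + \eta^2\norm{\grad f(x_{t-1})}^2 .
\]

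Next I would control the gradient term without losing the factor $\eta$. Since $\grad f(x_{t-1}) = \As e - \delta_t x_{t-1}$ and $\norm{\As} \le \beta+\rs \le \tfrac{1}{4\eta}$ by Assumption~\ref{assu:step-size}, we have $\eta^2 e^\T\As^2 e \le \tfrac{\eta}{4}e^\T\As e$; for the cross term I would use $x_{t-1}^\T\As e = x_{t-1}^\T\grad f(x_{t-1}) + \delta_t\norm{x_{t-1}}^2$ together with $-x_{t-1}^\T\grad f(x_{t-1}) = \tfrac{1}{2\eta}(\norm{x_t}^2-\norm{x_{t-1}}^2) - \tfrac{\eta}{2}\norm{\grad f(x_{t-1})}^2 \le \tfrac{1}{2\eta}(\norm{x_t}^2-\norm{x_{t-1}}^2)$, which holds because $x_{t-1}^\T\grad f(x_{t-1})\le0$ (Lemma~\ref{lemma:monotone}). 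This gives $\eta^2\norm{\grad f(x_{t-1})}^2 \le \tfrac{\eta}{4}e^\T\As e + \eta\delta_t(\norm{x_t}^2-\norm{x_{t-1}}^2)$. Substituting this into the identity and using the simplification $\eta\delta_t(\norm{x_t}^2-\norm{x_{t-1}}^2) - \tfrac{\eta\delta_t^2}{\rho}(\norm{\sol}+\norm{x_{t-1}}) = -\eta\delta_t(\norm{\sol}^2-\norm{x_t}^2)$, valid because $\tfrac{\delta_t}{\rho}(\norm{\sol}+\norm{x_{t-1}}) = \norm{\sol}^2-\norm{x_{t-1}}^2$, yields
\[
  \norm{x_t-\sol}^2 \le (1+\eta\delta_t)\norm{e}^2 - \tfrac{7\eta}{4}\,e^\T\As e - \eta\delta_t\big(\norm{\sol}^2-\norm{x_t}^2\big) .
\]

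Bounding $e^\T\As e \ge (\lmin+\rs)\norm{e}^2$ and rearranging, the claimed contraction reduces to the scalar inequality
\[
  \Big(\rho(\norm{x_t}-\norm{x_{t-1}}) - \tfrac{\lmin+\rs}{4}\Big)\norm{x_{t-1}-\sol}^2 \;\le\; \rho\big(\norm{\sol}-\norm{x_{t-1}}\big)\big(\norm{\sol}^2-\norm{x_t}^2\big) ,
\]
which is immediate when the left-hand factor is nonpositive. In the complementary regime I would invoke the one-step growth bound $\norm{x_t}-\norm{x_{t-1}} \le \eta\norm{\grad f(x_{t-1})} \le \eta(\beta+2\rs)\norm{x_{t-1}-\sol} \le \half\norm{x_{t-1}-\sol}$ (the constant $\beta+2\rs$ is the smoothness of $f$ on $\{\norm{x}\le\norm{\sol}\}$ by Lemma~\ref{lemma:monotone}, and $\eta(\beta+2\rs)\le\half$ by Assumption~\ref{assu:step-size}), together with $\norm{x_{t-1}-\sol}^2 \le \norm{x_{t-1}}^2+\norm{\sol}^2$ (since $x_{t-1}^\T\sol\ge0$ by Lemma~\ref{lemma:signs}) and $\norm{x_{t-1}}\le\norm{x_t}\le\norm{\sol}$, to close the estimate, possibly after a short case split on the size of $\norm{x_t}/\norm{\sol}$.

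I expect this last scalar inequality to be the main obstacle: one must show that a large one-step increase of $\norm{x_t}$ forces $\norm{x_{t-1}-\sol}$ to be correspondingly large yet bounded by the displayed relations, so that $\norm{x_t}$ cannot be too close to $\norm{\sol}$ and $\norm{\sol}^2-\norm{x_t}^2$ stays bounded below by enough to absorb the left-hand side. A secondary, more tedious point is verifying the collapse identity in the first display; it is a mechanical computation that hinges on substituting $\As\sol=-b$ repeatedly.
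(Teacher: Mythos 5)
Your algebraic development is essentially the paper's own route: you decompose $\grad f(x_{t-1}) = \As e - \delta_t x_{t-1}$ with $e = x_{t-1}-\sol$ and $\delta_t = \rho(\norm{\sol}-\norm{x_{t-1}})$, expand the squared distance, and bound the second-order term using $\eta\opnorm{\As}\le\tfrac14$ and $x_{t-1}^\T\grad f(x_{t-1})\le0$. Your bookkeeping is a bit sharper than the paper's (you retain an exact identity and keep the factor $\tfrac{7}{4}$ where the paper relaxes to $\tfrac{3}{2}$ via $2\eta\As(I-\eta\As)\succeq\tfrac32\eta\As$, and you trade the paper's leftover term $-\eta\rho(\norm{\sol}-\norm{x_{t-1}})^2\norm{\sol}$ for $-\eta\delta_t(\norm{\sol}^2-\norm{x_t}^2)$), but the structure is the same.

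The gap you flag at the end is real, and it is worth noting that the paper's own proof does not close it either: the final display of the proof in Appendix~\ref{sec:proof-lin-converge} yields the contraction factor
\begin{equation*}
1 - \tfrac{\eta}{2}\bigl[3\lmin + \rho(\norm{\sol}+2\norm{x_{t-1}})\bigr]
= 1 - \eta\Bigl[\rho\norm{x_{t-1}} - \Bigl(-\lmin - \tfrac{\rs+\lmin}{2}\Bigr)\Bigr],
\end{equation*}
i.e., with $\norm{x_{t-1}}$, not $\norm{x_t}$, plus a nonpositive extra term. The stated lemma with $\rho\norm{x_t}$ is a slip: one can check that the only downstream use, Lemma~\ref{lem:really-lin-converge}, needs only the $\norm{x_{t-1}}$ version (since $\norm{x_{t+\tau-1}}\ge\norm{x_t}$ for $\tau\ge1$ by Lemma~\ref{lemma:monotone}). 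So you should not try to strengthen $\norm{x_{t-1}}$ to $\norm{x_t}$; the scalar inequality you derive for that upgrade,
\begin{equation*}
\Bigl(\rho(\norm{x_t}-\norm{x_{t-1}}) - \tfrac{\lmin+\rs}{4}\Bigr)\norm{e}^2 \;\le\; \rho(\norm{\sol}-\norm{x_{t-1}})(\norm{\sol}^2-\norm{x_t}^2),
\end{equation*}
is not obtainable from the ingredients at hand (for instance, $\norm{x_{t-1}}$ close to $\norm{\sol}$ but $\norm{e}$ large makes the right side vanish while the left stays positive), and the heuristic bounds you list do not control it.

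The fix is simple. Aim for the contraction with $\rho\norm{x_{t-1}}$, which is all the paper needs. Then the same computation that produces your scalar inequality gives instead
\begin{equation*}
-\tfrac{\lmin+\rs}{4}\,\norm{e}^2 \;\le\; \rho(\norm{\sol}-\norm{x_{t-1}})(\norm{\sol}^2-\norm{x_t}^2),
\end{equation*}
which is immediate: the left side is nonpositive because $\lmin+\rs\ge0$ by Proposition~\ref{prop:characterization}, and the right side is nonnegative because $\norm{x_{t-1}}\le\norm{x_t}\le\norm{\sol}$ by Lemma~\ref{lemma:monotone}. Equivalently, you can simply discard your extra term $-\eta\delta_t(\norm{\sol}^2-\norm{x_t}^2)\le0$ and read off $1+\eta\delta_t-\tfrac{7\eta}{4}(\lmin+\rs) \le 1-\eta[\rho\norm{x_{t-1}} - (-\lmin-\tfrac{\rs+\lmin}{2})]$, which holds because the difference is $-\tfrac{\eta}{4}(\lmin+\rs)\le0$. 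With that change your argument closes cleanly.
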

\noindent
We defer the technical proof of this
lemma to Sec.~\ref{sec:proof-lin-converge}.

For nonconvex problem instances (those with $\lmin < 0$), the above
recursion is a contraction (implying linear convergence of $x_t$ to $\sol$)
only when $\rho\norm{x_t}$ is larger than $-\lmin - \half(\rs + \lmin)$.
Using the fact that $\|x_t\|$ is non-decreasing
(Lemma~\ref{lemma:monotone}), Lemma~\ref{lem:lin-converge} immediately
implies the following result.
\begin{lemma}
  \label{lem:really-lin-converge}
  Let $\mu \ge 0$. If $\rho\| x_{t}\| \geq -\lmin - \half(\rs + \lmin) + \mu$
  for some $t\geq0$, then for all $\tau\geq0$,
  \begin{equation*}
    \| x_{t+\tau}-\sol\|^2
    \leq\left(1-\eta\mu\right)^{\tau}\| 
    x_{t}-\sol\|^2\leq2\norm{\sol}^2e^{-\eta\mu\tau}.
  \end{equation*}
\end{lemma}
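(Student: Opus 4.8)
The plan is to combine the one-step contraction of Lemma~\ref{lem:lin-converge} with the monotonicity of the iterate norms from Lemma~\ref{lemma:monotone}. First, since $\norm{x_{t+\tau}} \ge \norm{x_t}$ for all $\tau \ge 0$ by Lemma~\ref{lemma:monotone}, the hypothesis $\rho\norm{x_t} \ge -\lmin - \half(\rs+\lmin) + \mu$ propagates forward: $\rho\norm{x_{t+\tau}} - \big(-\lmin - \half(\rs+\lmin)\big) \ge \mu$ for every $\tau \ge 0$. Applying Lemma~\ref{lem:lin-converge} at each index $s \in \{t+1,\dots,t+\tau\}$ therefore bounds the contraction factor there by $1-\eta\mu$, i.e.\ $\norm{x_s-\sol}^2 \le \big(1-\eta[\rho\norm{x_s}-(-\lmin-\half(\rs+\lmin))]\big)\norm{x_{s-1}-\sol}^2 \le (1-\eta\mu)\norm{x_{s-1}-\sol}^2$, where the last step uses only $\rho\norm{x_s}-(-\lmin-\half(\rs+\lmin)) \ge \mu$ and $\norm{x_{s-1}-\sol}^2\ge 0$. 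A trivial induction on $\tau$ (base case $\tau=0$ an equality) then gives $\norm{x_{t+\tau}-\sol}^2 \le (1-\eta\mu)^\tau\norm{x_t-\sol}^2$.

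For the stated consequence $\le 2\norm{\sol}^2 e^{-\eta\mu\tau}$ I would verify two auxiliary facts. First, $1-\eta\mu\in[0,1]$: since $\norm{x_t}\le\norm{\sol}$ (Lemma~\ref{lemma:monotone}) the hypothesis forces $\mu \le \rho\norm{x_t} + \lmin + \half(\rs+\lmin) \le \tfrac32(\rs+\lmin) \le \tfrac32(\beta+\rho\Rupper)$, using $\lmin\le\lmax\le\beta$ and $\rs = \rho\norm{\sol}\le\rho\Rupper$ from~\eqref{eq:R-upper-def}; combined with $\eta\le\tfrac1{4(\beta+\rho\Rupper)}$ (Assumption~\ref{assu:step-size}) this yields $\eta\mu\le\tfrac38<1$, so $(1-\eta\mu)^\tau\le e^{-\eta\mu\tau}$. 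Second, $\norm{x_t-\sol}^2 = \norm{x_t}^2 - 2x_t^{\T}\sol + \norm{\sol}^2 \le \norm{x_t}^2 + \norm{\sol}^2 \le 2\norm{\sol}^2$, where the first inequality uses $x_t^{\T}\sol\ge 0$ (Lemma~\ref{lemma:signs}) and the second uses $\norm{x_t}\le\norm{\sol}$ (Lemma~\ref{lemma:monotone}). Chaining these with the induction bound finishes the proof.

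There is essentially no obstacle here: the substance lies in Lemma~\ref{lem:lin-converge} and the monotonicity statement, both of which may be assumed. The only point deserving a moment's care is checking $\eta\mu\le 1$, so that $(1-\eta\mu)^\tau$ is a genuine decaying factor dominated by the exponential; this follows routinely from the step-size constraint together with the a priori bound $\norm{\sol}\le\Rupper$.
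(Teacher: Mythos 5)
Your proof is correct and follows essentially the same route as the paper's: propagate the hypothesis forward via the norm monotonicity of Lemma~\ref{lemma:monotone}, apply the one-step contraction of Lemma~\ref{lem:lin-converge} inductively, bound the initial distance by $2\norm{\sol}^2$ using Lemmas~\ref{lemma:signs} and~\ref{lemma:monotone}, and finish with $1-\eta\mu \le e^{-\eta\mu}$. The paper's one-sentence proof leaves the check $\eta\mu\le 1$ implicit; your verification of $\eta\mu\le\tfrac38$ via $\mu\le\tfrac32(\rs+\lmin)\le\tfrac32(\beta+\rho\Rupper)$ and Assumption~\ref{assu:step-size} is a welcome, and correct, explicit justification.
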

\begin{proof}
  Lemma~\ref{lem:lin-converge} implies that
  $\norm{x_{t + \tau} - \sol}^2 \le (1 - \eta \mu) \norm{x_{t + \tau - 1} -
    \sol}^2$
  for all $\tau > 1$. Using that
  $\norm{x_t - \sol}^2 \le \norm{x_t}^2 + \norm{\sol}^2 \le 2\norm{\sol}^2$
  by Lemmas~\ref{lemma:signs} and~\ref{lemma:monotone}, and
  $1 + \alpha \le e^\alpha$ for all $\alpha$ gives the result.
\end{proof}

It remains to understand whether the gradient descent iterations satisfy the 
condition $\rho\| x_{t}\| \ge 
-\lmin - \half (\rs + \lmin)+\mu$. Fortunately, as long 
as $\rho\| x_{t}\| $
is below $-\lmin - \smallval$, $|x_{t}\supind{1}|$ grows faster than 
$(1 + \eta \smallval)^t$:
\begin{restatable}{lemma}{lemGrowth}
  \label{lem:growth}
  Let $\smallval>0$. Then $\rho\| x_{t}\| \geq -\lmin-\smallval$ for all
  $t\geq\frac{2}{\eta\smallval}\log(1+\frac{\hinge{-\lmin}^2}{
    4\rho|b\supind{1}|})$.
\end{restatable}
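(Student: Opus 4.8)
The plan is to track the first eigencoordinate $x_t\supind{1}$ and show it grows geometrically as long as the iterate norm stays small. Recall from the proof of Lemma~\ref{lemma:signs} that, writing the cubic iteration~\eqref{eq:grad-iter} in the eigenbasis,
\begin{equation*}
  x_{t+1}\supind{1}
  = \bigl(1 - \eta\lmin - \eta\rho\norm{x_t}\bigr) x_t\supind{1} - \eta b\supind{1},
\end{equation*}
and that $x_t\supind{1} b\supind{1}\le 0$ for all $t$, so $|x_{t+1}\supind{1}| = (1-\eta\lmin-\eta\rho\norm{x_t})|x_t\supind{1}| + \eta|b\supind{1}|$. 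If $\rho\norm{x_t} \le -\lmin - \smallval$, then the multiplicative factor obeys $1 - \eta\lmin - \eta\rho\norm{x_t} \ge 1 + \eta\smallval$, giving the one-step growth bound
\begin{equation*}
  |x_{t+1}\supind{1}| \ge (1+\eta\smallval)\,|x_t\supind{1}| + \eta|b\supind{1}|.
\end{equation*}
I would argue by contradiction: suppose $\rho\norm{x_t} < -\lmin - \smallval$ for \emph{all} $\tau \le t$ (which is the relevant case, since once the bound $\rho\norm{x_\tau}\ge -\lmin-\smallval$ is achieved it persists by monotonicity of $\norm{x_\tau}$, Lemma~\ref{lemma:monotone}). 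Then the growth bound applies at every step from $0$ to $t-1$, and unrolling the recursion with $x_0\supind{1} = -r b\supind{1}/\norm{b}$, so $|x_0\supind{1}| \ge 0$, yields
\begin{equation*}
  |x_t\supind{1}| \ge \eta|b\supind{1}| \sum_{s=0}^{t-1}(1+\eta\smallval)^s
  = |b\supind{1}|\,\frac{(1+\eta\smallval)^t - 1}{\smallval}.
\end{equation*}
Since $\norm{x_t} \ge |x_t\supind{1}|$, this forces $\rho\norm{x_t} \ge \rho|b\supind{1}|\bigl((1+\eta\smallval)^t-1\bigr)/\smallval$. Setting this lower bound to be at least $-\lmin - \smallval$ (and noting $-\lmin - \smallval \le (-\lmin)_+ \le \hinge{-\lmin}$; if $-\lmin-\smallval \le 0$ there is nothing to prove) gives the contradiction whenever
\begin{equation*}
  (1+\eta\smallval)^t \ge 1 + \frac{\smallval\,\hinge{-\lmin}}{\rho|b\supind{1}|},
\end{equation*}
which, using $\smallval \le \hinge{-\lmin}$ to bound $\smallval\cdot\hinge{-\lmin} \le \hinge{-\lmin}^2$, holds as soon as $t\log(1+\eta\smallval) \ge \log\bigl(1 + \hinge{-\lmin}^2/(\rho|b\supind{1}|)\bigr)$. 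Finally $\log(1+\eta\smallval) \ge \eta\smallval/2$ for $\eta\smallval \le 1$ (which holds since $\eta\smallval \le \eta\hinge{-\lmin} \le \eta\beta \le 1$ under Assumption~\ref{assu:step-size}), so it suffices that $t \ge \frac{2}{\eta\smallval}\log\bigl(1+\hinge{-\lmin}^2/(\rho|b\supind{1}|)\bigr)$; a factor of $4$ rather than $1$ inside the argument as in the statement only weakens the requirement, so the claimed bound is sufficient.

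The only real subtlety is the logic of the contradiction argument — one must be careful that the geometric growth estimate is only valid over the stretch of iterations where $\rho\norm{x_\tau}$ remains below the threshold, and invoke monotonicity of $\norm{x_\tau}$ (Lemma~\ref{lemma:monotone}) to conclude that if the threshold is ever crossed it stays crossed, so that ``the bound fails at time $t$'' is equivalent to ``the bound fails at all times $\le t$.'' Everything else is the routine unrolling of a linear recursion and the elementary inequality $\log(1+u)\ge u/2$ for $u\in[0,1]$; I do not expect any obstacle there. One should also handle the degenerate case $-\lmin \le \smallval$ (equivalently $\hinge{-\lmin} \le \smallval$ or $\lmin \ge 0$) separately, where the claim $\rho\norm{x_t}\ge -\lmin-\smallval$ is automatic since the right-hand side is nonpositive.
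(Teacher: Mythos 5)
Your argument follows essentially the same route as the paper's: you track the first eigencoordinate $x_t\supind{1}$, show the one-step growth bound $|x_{t+1}\supind{1}| \ge (1+\eta\smallval)|x_t\supind{1}| + \eta|b\supind{1}|$ while $\rho\norm{x_t}\le -\lmin-\smallval$, unroll the recursion, invoke monotonicity of $\norm{x_t}$ (the paper instead introduces $t^\star = \max\{t : \rho\norm{x_t}\le -\lmin-\smallval\}$, well-defined by Proposition~\ref{prop:converge}, but that is an inessential packaging difference), and use $\log(1+\eta\smallval)\ge \eta\smallval/2$.

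The gap is in the very last step and the sentence justifying it. You need
\begin{equation*}
(1+\eta\smallval)^t \ge 1 + \frac{\smallval(-\lmin-\smallval)}{\rho|b\supind{1}|},
\end{equation*}
and you bound $\smallval(-\lmin-\smallval)\le \smallval\cdot\hinge{-\lmin}\le \hinge{-\lmin}^2$, arriving at the sufficient condition $t\ge\frac{2}{\eta\smallval}\log\bigl(1+\hinge{-\lmin}^2/(\rho|b\supind{1}|)\bigr)$. But the lemma asserts that the \emph{smaller} threshold $\frac{2}{\eta\smallval}\log\bigl(1+\hinge{-\lmin}^2/(4\rho|b\supind{1}|)\bigr)$ already suffices, and proving that a larger $t$ works does not establish this. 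Your closing remark that ``a factor of $4$ inside the argument only weakens the requirement, so the claimed bound is sufficient'' has the implication reversed: the stated threshold being smaller means the lemma's claim is \emph{stronger}, not that it follows from what you proved. The fix is exactly the paper's: instead of the two crude bounds $-\lmin-\smallval\le\hinge{-\lmin}$ and $\smallval\le\hinge{-\lmin}$, bound the product directly via $\smallval(-\lmin-\smallval) = -\smallval(\lmin+\smallval) \le \sup_{x\ge 0}\{-x(\lmin+x)\} = \hinge{-\lmin}^2/4$ (the supremum is attained at $x=-\lmin/2$ when $\lmin<0$). This one-line change recovers the factor of $4$ and closes the gap; everything else in your argument is sound.
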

\noindent
See Sec.~\ref{sec:proof-growth} for a proof of this lemma.

We combine the lemmas to give the linear convergence regime of
Theorem~\ref{theorem:gradient-descent-cubic}:
Lemma~\ref{lem:growth} with $\smallval=\frac{1}{3}(\rho\norm{\sol} + \lmin)$
yields $\rho\norm{x_{t}} \geq -\lmin-\frac{1}{3}\left(\rho\norm{\sol} +
\lmin\right)$ for
\begin{equation*}
  t\geq T_{1}\triangleq\frac{6}{\eta\left(\rho\norm{\sol}
    + \lmin\right)}\log\left(1+\frac{\hinge{-\lmin}^2}{
    4\rho|b^{\left(1\right)}|}\right)
  \IndNC
  = \frac{1}{\eta (\rho \norm{\sol} + \lmin)} \tgrow.
\end{equation*}
Therefore, by Lemma~\ref{lem:really-lin-converge} with $\mu = 
\half(\rs+ \lmin)-\smallval=
\frac{1}{6}(\rs+ \lmin)$, for any $t$ we have
\begin{equation}
  \label{eqn:eventual-linear-convergence}
  \norm{x_{T_{1}+t} - \sol}^{2}
  \leq2\norm{\sol}^{2} \exp\Big(-\frac{1}{6}\eta\left(\rho\norm{\sol} 
  + \lmin\right)t\Big).
\end{equation}
As a consequence, for all $t \ge 0$ we may use the
$(\beta + 2 \rho \norm{\sol})$-smoothness of $f$ and the fact that
$\norm{x_t}\le \norm{\sol}$ (by Lemma~\ref{lemma:monotone}) to
obtain
\begin{equation*}
  f\left(x_{t}\right)-f\left(\sol\right)\leq\frac{\beta+2\rho\norm{\sol}
  }{2}\norm{x_{t}-\sol}^{2}
  \le (\beta + 2 \rho \norm{\sol}) \norm{\sol}^2
  e^{-\frac{\eta}{6} (\rho \norm{\sol} + \lmin) (t - T_1)}
\end{equation*}
where we have used that $\nabla f(\sol) = 0$ and the
bound~\eqref{eqn:eventual-linear-convergence}.  Therefore, if we set
\begin{equation*}
  T_2 \defeq
  \frac{6}{\eta\left(\rho\norm{\sol} + \lmin\right)}
  \log \frac{(\beta + 2 \rho \norm{\sol}) \norm{\sol}^2}{\eps}
  = \frac{1}{\eta (\rho \norm{\sol} + \lmin)} \tconv(\eps),
\end{equation*}
then $t \ge T_1 + T_2 = \frac{1}{\eta (\rho \norm{\sol} +
  \lmin)}(\tgrow + \tconv(\eps))$ implies $f(x_t) - f(\sol) \le
\eps$.

\subsection{Sublinear convergence and convergence in subspaces}
\label{sec:epsilon-convergence-part}

We now turn to the sublinear convergence regime in
Theorem~\ref{theorem:gradient-descent-cubic},
which applies when the quantity $\rs + \lmin$ is sufficiently small that
\begin{equation}
  \label{eqn:assumption-bad-condition}
  \rho \norm{\scu} + \lmin \le \frac{\eps}{10 \norm{\scu}^2}~.
\end{equation}
If~\eqref{eqn:assumption-bad-condition} fails to hold, the $(\rs +
\lmin)^{-1}$ term dominates the convergence guarantee in
Theorem~\ref{theorem:gradient-descent-cubic}.  Therefore,
to complete the proof of Theorem~\ref{theorem:gradient-descent-cubic} it
suffices to show that if~\eqref{eqn:assumption-bad-condition} holds, then
$f(x_t) \le f(\scu) + \eps$ whenever
\begin{equation}
\label{eqn:sublinear-main}
t \ge T^{\rm sub}_\eps \defeq \frac{\tgrow + \tconv(\eps)}{\eta}
\cdot\frac{10 \norm{\scu}^2}{\eps}.
\end{equation}

Our proof of the result~\eqref{eqn:sublinear-main} proceeds as
follows: when $\rs + \lmin$ is small, the function $f$ is smooth along
eigenvectors with eigenvalues close to $\lmin$. It is
therefore sufficient to show convergence in the complementary subspace, which
occurs at a linear rate. Appropriately choosing the gap between the
eigenvalues in the complementary subspace and $\lambda\supind{1}(A)$ to trade
between convergence rate and function smoothness yields the
rates~\eqref{eqn:sublinear-main}.

The following analogues of Lemmas~\ref{lem:lin-converge}
and~\ref{lem:really-lin-converge}
establish subspace convergence. (Recall the notation $\As = A + \rs I$.)
\begin{lemma}
  \label{lem:proj-lin-converge}
  Let $\Pi$ be any projection matrix satisfying $\Pi A = A \Pi$ for which
  $\Pi \As\succeq\smallval\Pi$ for some $\smallval>0$.
  For all $t>0$,
  \begin{align*}
    &\norms{ \Pi \As^{1/2}(x_{t}-\scu)}^2
     \leq\left(1-\eta\smallval\right)
      \norms{ \Pi \As^{1/2}(x_{t-1}-\scu)}^2\\
    & \qquad+\sqrt{8}\eta\rho\left(\norm{ \scu} -\norm{ x_{t-1}}
      \right)\left[\rho\left(\norm{ \scu} -\norm{ x_{t-1}} \right)\norm{
      x_{t-1}}^2+\opnorm{ (I -\Pi)\As} \norm{ \scu}^2\right].
  \end{align*}
\end{lemma}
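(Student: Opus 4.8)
The plan is to expand the gradient-descent step in the $\As$-weighted, $\Pi$-projected norm and control the resulting cross term by exploiting that the iterate norm is monotone and bounded by $\norm{\scu}$ (Lemma~\ref{lemma:monotone}). Concretely, write $x_t = x_{t-1} - \eta\grad f(x_{t-1})$ and note the stationarity condition $\As\scu = -b$, so $\grad f(x_{t-1}) = \As(x_{t-1}-\scu) + \rho(\norm{x_{t-1}}-\norm{\scu})x_{t-1}$ after substituting $\rho\norm{x_{t-1}}x_{t-1} = \rho\norm{\scu}x_{t-1} + \rho(\norm{x_{t-1}}-\norm{\scu})x_{t-1}$ into $\grad f = Ax+b+\rho\norm{x}x$. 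Thus $x_t - \scu = (I-\eta\As)(x_{t-1}-\scu) - \eta\rho(\norm{x_{t-1}}-\norm{\scu})x_{t-1}$. Applying $\Pi\As^{1/2}$ (which commutes with $\As$ since $\Pi A = A\Pi$) and expanding the square gives three terms: the ``main'' term $\norms{\Pi\As^{1/2}(I-\eta\As)(x_{t-1}-\scu)}^2$, a cross term, and a small quadratic term in $\rho(\norm{\scu}-\norm{x_{t-1}})$.

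For the main term, use that on the range of $\Pi$ we have $\smallval\Pi\preceq\Pi\As\preceq(\lmax+\rs)\Pi$ and $\eta\le\frac{1}{4(\beta+\rho\Rupper)}\le\frac{1}{\lmax+\rs}$ (from Assumption~\ref{assu:step-size}), so $\Pi\As^{1/2}(I-\eta\As)^2\As^{1/2}\Pi\preceq(1-\eta\smallval)\Pi\As\Pi$, giving the contraction factor $(1-\eta\smallval)$ on $\norms{\Pi\As^{1/2}(x_{t-1}-\scu)}^2$. For the cross term and the quadratic term, bound $\norm{\Pi\As^{1/2}(x_{t-1}-\scu)}\le\sqrt{2}\,\opnorm{\As}^{1/2}\norm{\scu}$-type quantities using $\norm{x_{t-1}-\scu}^2\le 2\norm{\scu}^2$ (Lemmas~\ref{lemma:signs}, \ref{lemma:monotone}), and split $x_{t-1} = \Pi x_{t-1} + (I-\Pi)x_{t-1}$ so that the piece of $x_{t-1}$ in the range of $\Pi$ pairs against $\Pi\As^{1/2}(x_{t-1}-\scu)$ to produce the $\rho(\norm{\scu}-\norm{x_{t-1}})\norm{x_{t-1}}^2$ contribution, while the piece in the kernel of $\Pi$ contributes $\opnorm{(I-\Pi)\As}\norm{\scu}^2$ after Cauchy--Schwarz. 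The numerical constant $\sqrt{8}$ should come out of combining the cross term (which carries a factor $2\eta\rho$ together with a $\sqrt{2}$ from $\norm{x_{t-1}-\scu}\le\sqrt{2}\norm{\scu}$) with the smaller $O(\eta^2\rho^2)$ quadratic term, absorbing the latter into the former using $\eta\rho(\norm{\scu}-\norm{x_{t-1}})\lesssim 1$.

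The main obstacle I anticipate is the bookkeeping in the cross term: one must carefully decompose $x_{t-1}$ relative to $\Pi$ and use $\norm{\As^{1/2}(I-\eta\As)\Pi}\le\opnorm{\As}^{1/2}$ together with $\eta\opnorm{\As}\le 1$ to keep every factor either a contraction or controlled, and then verify that all the stray constants collapse to exactly $\sqrt{8}$ rather than some larger number — this requires being slightly clever about which Cauchy--Schwarz split to use and about when to invoke $\norm{x_{t-1}}\le\norm{\scu}$ versus $\norm{x_{t-1}-\scu}\le\sqrt{2}\norm{\scu}$. The other steps (the operator-norm inequality for the main term, commutation of $\Pi$ with functions of $A$, the sign/monotonicity facts) are routine given the lemmas already proved.
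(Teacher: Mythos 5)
The key gap is in how you plan to produce the $\opnorm{(I-\Pi)\As}\norm{\scu}^2$ factor. In your decomposition $\Pi\As^{1/2}(x_t-\scu) = \Pi\As^{1/2}(I-\eta\As)(x_{t-1}-\scu) - \eta\rho(\norm{x_{t-1}}-\norm{\scu})\Pi\As^{1/2}x_{t-1}$, the iterate $x_{t-1}$ only ever appears through $\Pi\As^{1/2}x_{t-1} = \As^{1/2}\Pi x_{t-1}$: the $(I-\Pi)x_{t-1}$ component has already been annihilated before the cross term is formed. So your proposed ``split $x_{t-1}=\Pi x_{t-1}+(I-\Pi)x_{t-1}$'' cannot feed an $(I-\Pi)$-piece into the cross term, and direct Cauchy--Schwarz on $\langle\Pi\As^{1/2}(I-\eta\As)(x_{t-1}-\scu),\,\Pi\As^{1/2}x_{t-1}\rangle$ would produce a factor of roughly $\opnorm{\Pi\As}\norm{\scu}\cdot\norm{x_{t-1}-\scu}$, i.e., $\opnorm{\As}$ rather than $\opnorm{(I-\Pi)\As}$. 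That is a fatally weaker bound: the whole point of the lemma is that the projection lets you only pay for the small part of the spectrum, which is what makes the sublinear rate of Theorem~\ref{theorem:gradient-descent-cubic} go through.

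The ingredient you need---and which your sketch does not invoke---is the \emph{first} conclusion of Lemma~\ref{lemma:monotone}, namely $x_t^T\grad\fcu(x_t)\le 0$ (you cite the lemma but only for norm monotonicity and $\norm{x_t}\le\norm{\scu}$). The paper expands $\norm{\Pi\As^{1/2}(x_t-\scu)}^2$ directly in terms of $\grad f$, rewrites the resulting cross term (via the identities $\grad f(x) = \As(x-\scu)-\rho(\norm{\scu}-\norm{x})x$ and its inverse) to expose $x_{t-1}^T\Pi\grad f(x_{t-1})$, and then uses
\[
x_{t-1}^T\Pi\grad f(x_{t-1}) \;=\; x_{t-1}^T\grad f(x_{t-1}) \;-\; x_{t-1}^T(I-\Pi)\grad f(x_{t-1}) \;\le\; -\,x_{t-1}^T(I-\Pi)\grad f(x_{t-1}).
\]
This sign fact is what shifts the burden from the potentially large $\Pi\As$ block to the $(I-\Pi)\As$ block, after which Cauchy--Schwarz on $-x_{t-1}^T(I-\Pi)\As(x_{t-1}-\scu)$ cleanly gives $\opnorm{(I-\Pi)\As}\norm{x_{t-1}}\norm{x_{t-1}-\scu}\le\sqrt{2}\opnorm{(I-\Pi)\As}\norm{\scu}^2$. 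Your main-term contraction and the step-size arithmetic are fine, but without this monotonicity trick your cross-term bound degrades by a full condition-number factor, and no amount of cleverness about which norms to use will recover the stated constant.
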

\noindent
See Appendix~\ref{sec:proof-proj-lin-converge} for a proof. Letting
$\Pi_\smallval = \sum_{i : \lambda\supind{i} \ge \smallval + 
\lambda\supind{1}}
v_i v_i^T$
be the projection matrix onto the span of eigenvectors of $A$
with eigenvalues at least $\lambda\supind{1}(A) + \smallval$,
we obtain the following consequence of 
Lemma~\ref{lem:proj-lin-converge},
whose proof we provide in
Appendix~\ref{sec:proof-eventually-proj-lin-converge}.
\begin{lemma}
  \label{lemma:eventually-proj-lin-converge}
  Let $t\geq0$, $\smallval \ge 0$.
  If $\rs\le -\lmin+\smallval$
  and $\rho \norm{x_t} 
  \geq-\lmin-\frac{1}{3}\smallval$, then
  for any $\tau \ge 0$,
  \begin{align*}
    \norm{ \Pi_{\smallval}\As^{1/2}\left(x_{t+\tau}-\scu\right)}^2
    & \leq\left(1-\eta\smallval\right)^{\tau}\norm{
      \Pi_{\smallval}\As^{1/2}\left(x_{t}-\scu\right)}^2+13\norm{ 
      \scu}^2\smallval \\
    & \leq2\left(\beta+\rho\norm{ \scu} \right)\norm{ 
    \scu}^2e^{-\eta\smallval\tau}+13\norm{ \scu}^2\smallval.
  \end{align*}
\end{lemma}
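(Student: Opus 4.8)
\emph{Approach.} This lemma is a consequence of the one-step recursion in Lemma~\ref{lem:proj-lin-converge} applied with the spectral projection $\Pi = \Pi_\smallval$, followed by a telescoping argument. First I would verify the hypotheses of Lemma~\ref{lem:proj-lin-converge}. Since $\Pi_\smallval$ is a spectral projection of $A$ it commutes with $\As = A + \rs I$, so $\Pi_\smallval A = A \Pi_\smallval$. On $\mathrm{range}(\Pi_\smallval)$ the matrix $\As$ has eigenvalues $\lambda\supind{i}(A) + \rs \ge \lmin + \smallval + \rs$, and because $\rs = \rho\norm{\scu} \ge \hinge{-\lmin} \ge -\lmin$ by Proposition~\ref{prop:characterization}, this is $\ge \smallval$; hence $\Pi_\smallval\As \succeq \smallval\Pi_\smallval$, as required. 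On the complementary range the eigenvalues of $\As$ lie in $[0, \lmin + \smallval + \rs)$, and the hypothesis $\rs \le -\lmin + \smallval$ forces $\lmin + \smallval + \rs \le 2\smallval$, so $\opnorm{(I - \Pi_\smallval)\As} \le 2\smallval$. (If $\smallval > \lmax - \lmin$ then $\Pi_\smallval = 0$ and the claim is trivial, so we may assume $\smallval \le \lmax - \lmin \le 2\beta$, whence Assumption~\ref{assu:step-size} gives $\eta\smallval \le \tfrac12$; the degenerate case $\smallval = 0$ reduces to a direct monotonicity statement, since the hypotheses together with Lemma~\ref{lemma:monotone} force $\norm{x_{t'}} = \norm{\scu}$ for all $t' \ge t$.)

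\emph{Error control and telescoping.} Next I would bound the inhomogeneous term appearing in Lemma~\ref{lem:proj-lin-converge}. By Lemma~\ref{lemma:monotone}, $\norm{x_{t'}}$ is non-decreasing in $t'$ and bounded by $\norm{\scu}$, so the hypothesis $\rho\norm{x_t} \ge -\lmin - \tfrac13\smallval$ propagates to every $t' \ge t$; combined with $\rs \le -\lmin + \smallval$ this yields $\rho(\norm{\scu} - \norm{x_{t'}}) = \rs - \rho\norm{x_{t'}} \le \tfrac43\smallval$ for all $t' \ge t$. Substituting this, $\norm{x_{t'}} \le \norm{\scu}$, and $\opnorm{(I - \Pi_\smallval)\As} \le 2\smallval$ into the bracket of Lemma~\ref{lem:proj-lin-converge} bounds each per-step error by $\sqrt 8\,\eta \cdot \tfrac43\smallval\big[\tfrac43\smallval\norm{\scu}^2 + 2\smallval\norm{\scu}^2\big] = \tfrac{40\sqrt 8}{9}\,\eta\smallval^2\norm{\scu}^2 \le 13\,\eta\smallval^2\norm{\scu}^2$. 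Iterating Lemma~\ref{lem:proj-lin-converge} from index $t$ to $t+\tau$ (each application legitimate since the norm condition holds at every index in $\{t, \dots, t+\tau-1\}$) gives
\[
  \norm{\Pi_\smallval\As^{1/2}(x_{t+\tau}-\scu)}^2
  \le (1-\eta\smallval)^\tau \norm{\Pi_\smallval\As^{1/2}(x_t-\scu)}^2
  + 13\,\eta\smallval^2\norm{\scu}^2 \sum_{j=0}^{\tau-1}(1-\eta\smallval)^j .
\]
Since $\eta\smallval^2 \sum_{j=0}^{\tau-1}(1-\eta\smallval)^j = \smallval\big(1 - (1-\eta\smallval)^\tau\big) \le \smallval$, the additive term is at most $13\smallval\norm{\scu}^2$, which is the first claimed inequality.

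\emph{Finishing and the main difficulty.} For the second inequality I would use $(1-\eta\smallval)^\tau \le e^{-\eta\smallval\tau}$ together with the crude bound $\norm{\Pi_\smallval\As^{1/2}(x_t-\scu)}^2 \le \opnorm{\As}\norm{x_t-\scu}^2 \le (\beta+\rs)\cdot 2\norm{\scu}^2$, where $\norm{x_t-\scu}^2 \le \norm{x_t}^2 + \norm{\scu}^2 \le 2\norm{\scu}^2$ follows from Lemmas~\ref{lemma:signs} and~\ref{lemma:monotone} exactly as in the proof of Lemma~\ref{lem:really-lin-converge}. The only genuinely delicate step is the spectral bookkeeping in the first paragraph: translating the two hypotheses on $\rs$ into the pair of facts ``$\Pi_\smallval\As$ is $\smallval$-coercive on its range'' and ``$\opnorm{(I - \Pi_\smallval)\As} \le 2\smallval$'', and then tracking the numerical constants so they collapse to the stated $13$. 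Everything else is routine telescoping and the already-established monotonicity of $\norm{x_t}$ from Lemma~\ref{lemma:monotone}.
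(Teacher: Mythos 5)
Your proof is correct and follows essentially the same route as the paper's: verify that $\Pi_\smallval$ satisfies the hypotheses of Lemma~\ref{lem:proj-lin-converge} (with $\Pi_\smallval\As \succeq \smallval\Pi_\smallval$ and $\opnorm{(I-\Pi_\smallval)\As}\le 2\smallval$), propagate $\rho(\norm{\scu}-\norm{x_{t'}})\le\tfrac43\smallval$ via the monotonicity of $\norm{x_{t'}}$, bound the per-step additive error by $13\eta\smallval^2\norm{\scu}^2$, telescope, and then apply the crude initial bound. Your numerical bookkeeping ($\tfrac{40\sqrt8}{9}\le 13$) and the extra care about the degenerate regimes ($\Pi_\smallval=0$ or $\smallval=0$) are correct and match the paper's constants.
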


\newcommand{\Tsublinone}{T_1^{\rm sub}}
\newcommand{\Tsublintwo}{T_2^{\rm sub}}
\newcommand{\Tsublin}{T^{\rm sub}}

We use these lemmas to prove the desired bound~\eqref{eqn:sublinear-main}
by appropriate separation of the eigenspaces over which we guarantee
convergence.
To that end, we define
\begin{equation}
  \label{eqn:delta-defs}
  \smallval \defeq \frac{\eps}{10 \norm{\scu}^2}.
\end{equation}
The growth that Lemma~\ref{lem:growth} guarantees shows that
$\rho\norm{x_{t}} \geq -\lmin-\frac{1}{3}\smallval$ 
for every
\begin{equation*}
  t \ge \Tsublinone \defeq
  \frac{6}{\eta\smallval}\log\left(1+\frac{\hinge{-\lmin}^2}{
      4\rho|b^{\left(1\right)}|}\right)
  = \frac{1}{\eta\smallval} \tgrow.
\end{equation*}
Thus,
using $(\beta + 2 \rs \norm{\scu}^2 / \eps \ge 2$ as in
the beginning of Appendix~\ref{sec:proof-gd-cubic}, we may define
\begin{equation*}
  \Tsublintwo \defeq
  \frac{1}{\eta\smallval}
  \log \frac{2 (\beta + \rho \norm{\scu})}{\smallval}
  \le \frac{1}{\eta\smallval}
  \log\left(\left[ \frac{ (\beta + 2\rs)\norm{\scu}^2}{\eps}\right]^6 \right)
  = \frac{\tconv(\eps)}{\eta\smallval}.
\end{equation*}
Thus $2(\beta + \rs)\norm{\scu}^2 e^{-\eta \smallval t} \le 
\norm{\scu}^2 
\smallval$ for $t \ge \Tsublintwo$, and
by Lemma~\ref{lemma:eventually-proj-lin-converge} we have 
\begin{equation}
  \norm{\Pi_{\smallval}\As^{1/2}(x_t - \scu)}^{2}\leq
  \norm{\scu}^2 \smallval + 13\norm{\scu}^2 \smallval = 
  14\norm{\scu}^{2} \smallval,
  \label{eqn:proj-bound}
\end{equation}
for $t \ge \Tsublin = \Tsublinone+\Tsublintwo$.

We now translate the guarantee~\eqref{eqn:proj-bound} on the distance from $x_t$ to
$\scu$ in the subspace of ``large'' eigenvectors of $A$ to
a guarantee on the solution quality $f(x_t)$. Using
the expression~\eqref{eq:fx-s-expression} for $f(x)$, the
orthogonality of $I - \Pi_\smallval$ and $\Pi_\smallval$ and 
$\norm{x_t}\le\norm{\scu}$, we have
\begin{flalign*}
 & f(x_t) \le  f(\scu)+ \half \norm{(I - \Pi_{\smallval})\As^{\frac{1}{2}}(x_t - 
 \scu)}^{2}
    + \half\norm{\Pi_{\smallval}\As^{\frac{1}{2}}(x_t - \scu)}^{2}
    \\ & \quad \quad \quad
    +
    \frac{\rho\norm{\scu} }{2}(\norm{\scu} -\norm{x_t} )^{2}.
\end{flalign*}
Now we note that
\begin{equation}\label{eq:subpsace-opnorm-bound}
  \opnorm{(I - \Pi_\smallval)\As}
  = \max_{i : \lambda\supind{i} < \lambda\supind{1} + \smallval}
  |\lambda\supind{i} + \rho \norm{\scu}|
  \le \lmin + \smallval + \rho \norm{\scu}
  \le 2 \smallval,
\end{equation}
where we have used our assumption~\eqref{eqn:assumption-bad-condition}
that $\rho \norm{\scu} + \lmin \le \frac{\eps}{10 \norm{\scu}^2} = \smallval$.
Using this gives
\begin{equation*}
  f(x_t) \le f(\scu) + \smallval \norm{x_t - \scu}^2
  + 7 \norm{\scu}^2 \smallval
  + \frac{\rho \norm{\scu}}{2} (\norm{\scu} - \norm{x_t})^2,
\end{equation*}
where we use inequality~\eqref{eqn:proj-bound}.  Because
$\rho \norm{x_t} \ge -\lmin - \frac{1}{3}\smallval$
for $t \ge \Tsublinone$, we obtain
\begin{equation*}
  0 \le \rho (\norm{\scu} - \norm{x_t})  \le
  \rho \norm{\scu} + \lmin - (\rho \norm{x_t} + \lmin)
  \le \frac{4}{3}\smallval.
\end{equation*}
Substituting back into~\eqref{eq:subpsace-opnorm-bound} and using  
$\norm{x_t}\le\norm{\scu}$ 
(Lemma~\ref{lemma:monotone}) gives
\begin{equation*}
  f(x_t)\le f(\scu) + 9 \norm{\scu}^2 \smallval
  \le  f(x_t) + \eps,
\end{equation*}
where we substitute $\smallval = \frac{\eps}{10 \norm{\scu}^2}$.
Summarizing, if $\rho \norm{\scu} + \lmin \le \smallval = \frac{\eps}{10
  \norm{\scu}^2}$, then $x_t$ is $\eps$-suboptimal for
\eqref{eqn:problem-cubic} whenever $t \ge \Tsublinone +
\Tsublintwo$, i.e., inequality~\eqref{eqn:sublinear-main} holds.
\subsection{Proof of Lemma~\ref{lem:lin-converge}}
\label{sec:proof-lin-converge}

Expanding $x_t = x_{t - 1} - \eta \nabla f(x_{t-1})$, we have
\begin{equation}
  \label{eqn:lyapunov-distance}
  \norm{x_{t}-\sol}^2
  =\norm{x_{t-1}-\sol}^2 - 2\eta\left(x_{t-1}-\sol\right)^{\T} \grad f(x_{t-1})
  + \eta^2\norm{\grad f\left(x_{t-1}\right)}^2.
\end{equation}
Using the equality
$\grad f\left(x\right)=\As\left(x-\sol\right)-\rho\left(\norm{\sol} -\norm{x}
\right)x$, we rewrite the cross-term  $\left(x_{t-1}-\sol\right)^{\T}
\grad
f\left(x_{t-1}\right)$ as
\begin{flalign}
  \nonumber
  \left(x_{t-1}-\sol\right)^{\T}&\As 
  \left(x_{t-1}-\sol\right)+\rho\left(\norm{x_{t-1}}
  -\norm{\sol} \right)(\norm{x_{t-1}}^2 - x_{t-1}^T \sol) \\ \nonumber
  = & 
  \left(x_{t-1}-\sol\right)^{\T}\left(\As+\frac{\rho}{2}\left(\norm{x_{t-1}}
    -\norm{\sol} \right) I \right) \left(x_{t-1}-\sol\right) \\
   & +\frac{\rho}{2}\left(\norm{\sol} -\norm{x_{t-1}} 
   \right)^2\left(\norm{x_{t-1}} 
    +\norm{\sol} \right).
    \label{eqn:cross-term-lyapunov}
\end{flalign}
Moving to the second order term $\norm{\grad f\left(x_{t-1}\right)}^2$
from the expansion~\eqref{eqn:lyapunov-distance},
we find
\begin{alignat*}{1}
  \norm{\grad f\left(x_{t-1}\right)}^2 & 
  =\norm{\As\left(x_{t-1}-\sol\right)+\rho\left(\norm{x_{t-1}} -\norm{\sol} 
  \right)x_{t-1}}^2\\
  & 
  \leq2\left(x_{t-1}-\sol\right)^{\T}\As^2\left(x_{t-1}-\sol\right)+2\rho^2\left(\norm{x_{t-1}}
   -\norm{\sol} \right)^2\norm{x_{t-1}}^2.
\end{alignat*}
Combining this inequality with the cross-term
calculation~\eqref{eqn:cross-term-lyapunov}
and the squared distance~\eqref{eqn:lyapunov-distance} we obtain
\begin{alignat*}{1}
  \norm{x_{t}-\sol}^2 & \le
  (x_{t-1}-\sol)^{\T}(I-2\eta \As(I-\eta \As)-\eta\rho(\norm{x_{t-1}} 
  -\norm{\sol} ) I )
  (x_{t-1}-\sol) \\
  & \qquad-\eta\rho\left(\norm{\sol} -\norm{x_{t-1}} \right)^2\left(\norm{x_{t-1}} \left(1-2\eta\rho\norm{x_{t-1}} \right)+\norm{\sol} \right).
\end{alignat*}
Using $\eta\leq\frac{1}{4\left(\beta+\rho 
R\right)}\leq\frac{1}{4\opnorm{\As}}$ yields 
$2\eta \As\left(1-\eta \As\right)\succeq\frac{3}{2}\eta
\As\succeq\frac{3}{2}\eta \left(\lmin + \rho\norm{\sol} \right)I$, so
\begin{align*}
  \norm{x_{t}-\sol}^2 & \leq
  \left(1-\frac{\eta}{2}\left[3\lmin +
    \rho\left(\norm{\sol} +2\norm{x_{t-1}} \right)\right]\right)\norm{x_{t-1}-\sol}^2 \\
  & \qquad ~ -\eta\rho\left(\norm{\sol} -\norm{x_{t-1}} \right)^2\norm{\sol}.
\end{align*}

\subsection{Proof of Lemma~\ref{lem:growth}}
\label{sec:proof-growth}
 
The claim is trivial when $\lmin \ge 0$, as clearly $\rho \norm{x_t} 
\ge 0$, so we assume $\lmin < 0$. Using
Proposition~\ref{prop:converge} that gradient descent is
convergent, we may define $t\opt = \max\{t : \rho \norm{x_t} \le -\lmin
- \smallval\}$.
Then for every $t\leq t\opt$, the gradient descent 
iteration~\eqref{eq:grad-iter} 
satisfies
\begin{align*}
  \frac{x_{t}^{\left(1\right)}}{-\eta b^{\left(1\right)}}
  & =\left(1 - \eta\lmin - \eta\rho\norm{x_{t-1}} 
  \right)\frac{x_{t-1}^{\left(1\right)}}{-\eta b^{\left(1\right)}}+1 \\ & 
  \geq\left(1+\eta\smallval\right)\frac{x_{t-1}^{\left(1\right)}}{-\eta 
  b^{\left(1\right)}}+1
  \geq \cdots 
  \geq\frac{1}{\eta\smallval}\left(\left(1+\eta\smallval\right)^{t}-1\right).
\end{align*}
Multiplying both sides of the equality by $\eta |b\supind{1}|$ and
using that $x_t\supind{1} b\supind{1} \le 0$, we have
\begin{equation*}
  \frac{-\lmin-\smallval}{\rho}\geq\norm{x_{t\opt}} \geq 
  |x_{t\opt}^{\left(1\right)}|\geq\frac{|b^{\left(1\right)}|}{\smallval}\left(\left(1+\eta\smallval\right)^{t\opt}-1\right).
\end{equation*}
Consequently,
\begin{equation*}
  t\opt\leq\frac{\log\left(1+\frac{(-\lmin-\smallval)\smallval}{\rho|b\supind{1}|}\right)}{
    \log(1+\eta\smallval)}
  \leq\frac{2}{\eta\smallval}\log\left(1+\frac{\hinge{-\lmin}^2}{4\rho|b^{\left(1\right)}|}\right),
\end{equation*}
where we used
$\eta\smallval \leq -\eta\lmin \leq -\lmin/\beta \leq1$, whence
$\log(1 + \eta \smallval) \ge \frac{\eta \smallval}{2}$, and
$-\lmin \smallval - \smallval^2 \le \sup_{x\ge0}\{- x(\lmin + x)\} \le 
\frac{ \hinge{-\lmin}^2 }{ 4} $.

\subsection{Proof of Lemma~\ref{lem:proj-lin-converge}}
\label{sec:proof-proj-lin-converge}

For typographical convenience, we prove the result with $t + 1$ replacing $t$.
Using the commutativity of $\Pi$ and $A$, we have $\Pi \As=\As\Pi$, so
\begin{flalign}
  \nonumber \norm{\Pi \As^{1/2}\left(x_{t + 1}-\scu\right)}^2
  = & \norm{\Pi
    \As^{1/2}\left(x_t-\scu\right)}^2\\ &-2\eta\left(x_t-\scu\right)^T \!\!
  \As\Pi
  \grad f\left(x_t\right)+\eta^2\norm{\Pi \As^{1/2}\grad
    f\left(x_t\right)}^2.
  \label{eqn:proj-lyapunov}
\end{flalign}
We substitute
$\grad f\left(x\right)=\As\left(x-\scu\right)-\rho\left(\norm{\scu} -\norm{x}
\right)x$ in the cross term to obtain
\begin{align*}
  \lefteqn{\left(x_t-\scu\right)^{\T} \Pi \As\grad f\left(x_t\right)} \\
  & \qquad ~ = \left(x_t-\scu\right)^{\T}\Pi \As^2\Pi\left(x_t-\scu\right)
  - \rho\left(\norm{\scu} -\norm{x_t} \right)x_t^{\T}\Pi \As\left(x_t-\scu\right).
\end{align*}
Substituting
$\As\left(x-\scu\right)=\grad f\left(x\right)+\rho\left(\norm{\scu} -\norm{x}
\right)x$ in the last term yields
\begin{equation}
  x_t^{\T}\Pi \As\left(x_t-\scu\right)=x_t^{\T}\Pi\grad
  f\left(x_t\right)+\rho\left(\norm{\scu} -\norm{x_t} \right)\norm{\Pi
    x_t}^2.
  \label{eqn:proj-cross-term}
\end{equation}
Invoking Lemma~\ref{lemma:monotone} and the fact that
$x_t^{\T}\grad f\left(x_t\right)\leq0$, we get
\begin{alignat*}{1}
  x_t^{\T}\Pi\grad f\left(x_t\right) & =x_t^{\T}\grad f\left(x_t\right)-x_t^{\T}\left(I-\Pi\right)\grad f\left(x_t\right)\\
  & \leq-x_t^{\T}\left(I-\Pi\right)\grad f\left(x_t\right)\\
  & =-x_t^{\T}\left(I-\Pi\right)\As\left(x_t-\scu\right)+\rho\left(\norm{\scu} 
  -\norm{x_t} \right)\norm{\left(I-\Pi\right)x_t}^2\\
  & \leq\opnorm{\left(I-\Pi\right)\As} \norm{x_t} \norm{x_t-\scu} 
  +\rho\left(\norm{\scu} -\norm{x_t} \right)\norm{\left(I-\Pi\right)x_t}^2\\
  & \leq\sqrt{2}\opnorm{\left(I-\Pi\right)\As} 
  \norm{\scu}^2+\rho\left(\norm{\scu} -\norm{x_t} 
  \right)\norm{\left(I-\Pi\right)x_t}^2,
\end{alignat*}
where in the last line we used $x_t^{\T}\scu \ge 0$ (by
Lemma~\ref{lemma:signs}). Combining this with
the cross terms~\eqref{eqn:proj-cross-term}, we find that
\begin{subequations}
\begin{alignat}{1}\label{eq:subspace-bound-1}
  x_t^{\T}\Pi \As\left(x_t-\scu\right) & \leq\sqrt{2}
  \opnorm{\left(I-\Pi\right)\As} \norm{\scu}^2
  +\rho\left(\norm{\scu} -\norm{x_t} \right)\norm{x_t}^2.
\end{alignat}
Moving on to the second order term in the expansion~\eqref{eqn:proj-lyapunov},
we have
\begin{alignat}{1}
  \norm{\Pi \As^{1/2}\grad f\left(x_t\right)}^2 & =\norm{\Pi 
  \As^{3/2}\left(x_t-\scu\right)+\rho\left(\norm{x_t} -\norm{\scu} 
  \right)\As^{1/2}\Pi x_t}^2 \nonumber\\
  & \leq2\norm{\Pi \As^{3/2}\left(x_t-\scu\right)}^2+2\rho^2\opnorm{\Pi 
  \As} 
  \left(\norm{x_t} -\norm{\scu} \right)^2\norm{x_t}^2. 
  \label{eq:subspace-bound-2}
\end{alignat}
\end{subequations}
Substituting the bounds~\eqref{eq:subspace-bound-1} 
and~\eqref{eq:subspace-bound-2} into the
expansion~\eqref{eqn:proj-lyapunov},
we have
\begin{align*}
 \norm{\Pi \As^{1/2}\left(x_{t + 1}-\scu\right)}^2 
\leq & \left(x_t-\scu\right)^{\T}\left(I-2\eta\Pi \As\left(I-\eta\Pi 
\As\right)\right)\Pi 
\As\left(x_t-\scu\right)\\
&  +2\eta\rho\left(\norm{\scu} -\norm{x_t} 
  \right)\left[\sqrt{2}\opnorm{\left(I-\Pi\right)\As}\norm{\scu} 
  ^2\right. \\ & \qquad\qquad
  + \left. \left(1+\eta\opnorm{\Pi \As} \right)\rho\left(\norm{x_t} 
  -\norm{\scu} \right)\norm{x_t}^2\right].
\end{align*}
Using $\eta\leq1/(4\left(\beta+\rho R\right))$, which guarantees
$0 \preceq \eta\Pi \As\preceq I/4 \prec I/2$, together with the 
assumption that $\Pi \As\succeq\smallval\Pi$ gives
\begin{equation*}
0 \preceq I-2\eta\Pi \As\left(I-\eta\Pi 
\As\right) \preceq (1-\eta\smallval)I
\end{equation*} 
and therefore
\begin{alignat*}{1}
&\norm{\Pi \As^{1/2}\left(x_{t + 1}-\scu\right)}^2  
\leq\left(1-\eta\smallval\right)\norm{\Pi 
	\As^{1/2}\left(x_t-\scu\right)}^2\\
& \quad\quad\quad\quad+\sqrt{8}\eta\rho\left(\norm{\scu} -\norm{x_t} 
\right)\left[\rho\left(\norm{\scu} -\norm{x_t} \right)\norm{x_t}^2
+\opnorm{\left(I-\Pi\right)\As}\norm{\scu}^2\right].
\end{alignat*}

\subsection{Proof of Lemma~\ref{lemma:eventually-proj-lin-converge}}
\label{sec:proof-eventually-proj-lin-converge}

The conditions of the lemma imply that for $\tau\geq0$,
\begin{equation*}
  \rho(\norm{\scu} -\norm{x_{t+\tau}}) \leq 4\smallval/3
\end{equation*}
and also that
$\opnorm{\left(I-\Pi_{\smallval}\right)\As}\leq2\smallval$ 
(Eq.~\eqref{eq:subpsace-opnorm-bound}) 
and
$\Pi_{\smallval}\As\succeq\smallval \Pi_{\smallval}$. Substituting these 
bounds 
into
Lemma~\ref{lem:proj-lin-converge} along with $\norm{x_{t-1}} \leq\norm{\scu} $
(Lemma~\ref{lemma:monotone}), we get
\begin{alignat*}{1}
  \norm{\Pi_{\smallval}\As^{1/2}\left(x_{t+\tau}-\scu\right)}^2 & \leq 
  \left(1-\eta\smallval\right) 
  \norm{\Pi_{\smallval}\As^{1/2}\left(x_{t+\tau-1}-\scu\right)}^2+ 
  13\eta\smallval\smallval\norm{\scu}^2.
\end{alignat*}
Iterating this $\tau$ times gives
\begin{alignat*}{1}
  \norm{\Pi_{\smallval}\As^{1/2}(x_{t+\tau}-\scu)}^2
  & \leq
  (1-\eta\smallval)^{\tau}
  \norm{\Pi_{\smallval}\As^{1/2}(x_{t}-\scu)}^2
  + 13\smallval\norm{\scu}^2(1-(1-\eta\smallval)^{\tau})\\
  & \leq
  2 (\beta+\rho\norm{\scu})\norm{\scu}^2
  e^{-\eta\smallval\tau}+13\norm{\scu}^2\smallval
\end{alignat*}
where the last transition uses that
\begin{equation*}
  \norm{\Pi_{\smallval}\As^{1/2}\left(x_{t}-\scu\right)}^2\leq\opnorm{\As} 
  \norm{x_{t}-\scu}^2\leq\left(\beta+\rho\norm{\scu} \right)2\norm{\scu}^2.
\end{equation*}

\newcommand{\Agam}{A_{0}}
\newcommand{\sgam}{\tilde{x}^\star_0}

\section{Proof of Theorem~\ref{theorem:tr-krylov}}
\label{sec:proof-krylov}
We begin with a few building blocks on polynomial approximation and
the convex trust region problem; see \cite[Appendix 
C.1]{CarmonDu18} for full
proofs, though the results are essentially standard
polynomial approximations.

\begin{restatable}[Approximate matrix inverse]{lemma}{lemLinCheby}
  \label{lem:lin-cheby}
  Let $\alpha,\beta$ satisfy $0 < \alpha \le \beta$, and let $\kappa = 
  \beta/\alpha$. For $t \ge 1$ there exists a polynomial $p$ 
  of degree at most $t-1$, such that for every $M$ satisfying $\alpha I 
  \preceq M \preceq \beta I$,
  \begin{equation*}
    \opnorm{I - Mp(M)} \le 2e^{-2t/\sqrt{\kappa}}.
  \end{equation*}
\end{restatable}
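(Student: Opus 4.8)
The plan is to run the classical Chebyshev argument. First I would reduce the operator statement to a scalar one: since $M$ is symmetric with eigenvalues in $[\alpha,\beta]$, for any polynomial $q$ we have $\opnorm{q(M)} = \max_i |q(\lambda\supind{i}(M))| \le \sup_{x\in[\alpha,\beta]}|q(x)|$. Writing $q(x) = 1 - x\,p(x)$, the polynomials $p$ of degree at most $t-1$ are in bijection with the polynomials $q$ of degree at most $t$ satisfying $q(0)=1$, and $I - Mp(M) = q(M)$. So it suffices to exhibit a degree-$\le t$ polynomial $q$ with $q(0)=1$ and $\sup_{x\in[\alpha,\beta]}|q(x)| \le 2e^{-2t/\sqrt\kappa}$. (The case $\kappa=1$ is trivial: take $p\equiv 1/\alpha$, so that $q\equiv 0$.)

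For $\kappa>1$ I would take the shifted-and-scaled Chebyshev polynomial
\[
  q(x) = \frac{T_t\!\left(\frac{\beta+\alpha-2x}{\beta-\alpha}\right)}{T_t\!\left(\frac{\beta+\alpha}{\beta-\alpha}\right)},
\]
where $T_t$ is the degree-$t$ Chebyshev polynomial of the first kind. The affine map $x\mapsto \frac{\beta+\alpha-2x}{\beta-\alpha}$ carries $[\alpha,\beta]$ onto $[-1,1]$, where $|T_t|\le 1$, so the numerator is bounded by $1$ on $[\alpha,\beta]$; also $q(0)=1$ by construction. Hence $\sup_{x\in[\alpha,\beta]}|q(x)| \le 1/T_t(y)$ with $y = \frac{\beta+\alpha}{\beta-\alpha} = \frac{\kappa+1}{\kappa-1}$. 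I then lower-bound $T_t(y)$ for $y>1$ using $T_t(y) \ge \tfrac12\bigl(y+\sqrt{y^2-1}\bigr)^t$, together with the algebraic simplification $y^2-1 = 4\kappa/(\kappa-1)^2$, which gives $y+\sqrt{y^2-1} = \frac{\sqrt\kappa+1}{\sqrt\kappa-1}$. This yields $\sup_{[\alpha,\beta]}|q| \le 2\bigl(\tfrac{\sqrt\kappa-1}{\sqrt\kappa+1}\bigr)^t$. To convert this geometric factor into the stated exponential, I would show $\tfrac{\sqrt\kappa-1}{\sqrt\kappa+1}\le e^{-2/\sqrt\kappa}$: setting $s=\sqrt\kappa>1$, this is $\log\tfrac{s+1}{s-1}\ge \tfrac2s$, which follows from $\tfrac12\int_{s-1}^{s+1}\frac{dx}{x} \ge \tfrac1s$ by convexity of $x\mapsto 1/x$. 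Raising to the $t$-th power and feeding back through $\opnorm{q(M)} = \opnorm{I-Mp(M)}$ finishes the proof.

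The work here is essentially routine (this is a standard polynomial-approximation estimate, as the paper notes), so there is no real conceptual obstacle; the one place to be careful is the chain of exact scalar bounds for $T_t$ evaluated at $y=(\kappa+1)/(\kappa-1)$ — in particular getting the identity $y+\sqrt{y^2-1} = (\sqrt\kappa+1)/(\sqrt\kappa-1)$ and then the sharp conversion of $\bigl(\tfrac{\sqrt\kappa-1}{\sqrt\kappa+1}\bigr)^t$ into $e^{-2t/\sqrt\kappa}$ rather than a looser bound like $e^{-2t/(\sqrt\kappa+1)}$. The spectral reduction $\opnorm{q(M)} = \sup_{[\alpha,\beta]}|q|$, the $q(0)=1$ normalization, and the degree bookkeeping are immediate.
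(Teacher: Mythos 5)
Your proof is correct and is exactly the standard shifted-Chebyshev argument; the paper defers the details to \cite[Appendix~C.1]{CarmonDu18} and describes the result as a ``standard polynomial approximation,'' which is precisely what you carry out. The reduction to a scalar supremum, the Chebyshev normalization at $x=0$, the identity $y+\sqrt{y^2-1}=(\sqrt\kappa+1)/(\sqrt\kappa-1)$, and the convexity bound $\log\frac{s+1}{s-1}\ge 2/s$ all check out, giving the stated $2e^{-2t/\sqrt\kappa}$ with the correct degree $t-1$ for $p$.
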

\begin{lemma}[Finding eigenvectors~{\cite[Thm.~4.2]{KuczynskiWo92}}]
  \label{lem:eigenvec-tight}
  Let $u \in \R^d$ be a unit vector and $M\succeq 0$ be such that $u^T M u = 0$,
  and let 
  $v\in\R^d$. For $t\ge1$ there exists $z_t \in \Krylov[t][M,v]$ such 
  that
  \begin{equation*}
    \norm{z_t} = 1
    ~~\mbox{and}~~
    z_t^T M z_t \le \frac{\opnorm{M}}{16(t-\half)^2}
    \log^2\left(-2+4\frac{\norm{v}^2}{(u^T v)^2}\right).
  \end{equation*}
\end{lemma}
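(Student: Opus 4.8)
The plan is to reduce the statement to a scalar polynomial extremal problem and then invoke the classical estimate of Kuczynski and Wozniakowski. Every element of $\Krylov[t][M,v]$ has the form $p(M)v$ for a polynomial $p$ of degree at most $t-1$, so it suffices to exhibit one such $p$ with $p(0)\neq 0$ and take $z_t = p(M)v/\norm{p(M)v}$. After rescaling $M$ (and multiplying the final bound by $\opnorm{M}$) we may assume $\opnorm{M}=1$, and after normalizing $p$ we may assume $p(0)=1$. Write $M$ in its eigenbasis with eigenvalues $\mu_i\in[0,1]$ and let $w_i$ be the corresponding coordinates of $v$; note that $u^T M u = 0$ together with $M\succeq 0$ forces $Mu=0$, so $u$ is a null eigenvector. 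The Rayleigh quotient is then
\[
  z_t^T M z_t = \frac{\sum_i \mu_i\, p(\mu_i)^2 w_i^2}{\sum_i p(\mu_i)^2 w_i^2}.
\]
The direction $u$ contributes $0$ to the numerator and $p(0)^2 (u^Tv)^2 = (u^Tv)^2$ to the denominator, and since $\{w_i^2\}\mapsto z_t^T M z_t$ is a linear-fractional function of the coordinate masses, it is maximized by concentrating the remaining weight $\norm{v}^2-(u^Tv)^2$ at a single eigenvalue. Hence, with $\gamma \defeq \norm{v}^2/(u^Tv)^2$,
\[
  z_t^T M z_t \le \max_{\mu\in(0,1]} \frac{\mu\, p(\mu)^2 (\gamma-1)}{1 + p(\mu)^2(\gamma-1)}.
\]

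It remains to choose $p$ of degree at most $t-1$ making the right-hand side at most $\frac{1}{16(t-\frac12)^2}\log^2(4\gamma-2)$; after undoing the rescaling this is the claim. If $t-\frac12 \le \frac14\log(4\gamma-2)$ the target already exceeds $1\ge z_t^TMz_t$ and there is nothing to prove (and the case $t=1$ is elementary), so assume $t$ is at least of order $\log\gamma$. For the remaining range I would take $p$ to be a shifted Chebyshev polynomial of the first kind, supported on an interval $[\delta,1]$ and extrapolated past $\mu=0$: with the sign chosen so that $p(0)=1$,
\[
  p(\mu) = T_{t-1}\!\left(\frac{2\mu-(1+\delta)}{1-\delta}\right)\Big/\,T_{t-1}\!\left(\frac{1+\delta}{1-\delta}\right), \qquad \delta \asymp \frac{\log^2\gamma}{t^2}.
\]
On $[0,\delta]$ one has $|p|\le 1$, so the contribution to the maximum above is of order $\mu\le\delta$, and the interior Markov/Chebyshev derivative bound additionally gives $\mu\,p(\mu)^2 = O(\sqrt\delta/t)$ there, which together with the damping factor $1/(1+p^2(\gamma-1))$ keeps that part at the level $\delta$; on $[\delta,1]$ one has $|p|\le 1/T_{t-1}(\tfrac{1+\delta}{1-\delta})\le 2e^{-2(t-1)\sqrt\delta}$, so $p(\mu)^2(\gamma-1)$ is small and that part of the maximum is $O(\gamma e^{-4(t-1)\sqrt\delta})$. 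Balancing the two terms over $\delta$ and accounting for the $\tan$-type factor $4\gamma-2$ yields the stated rate.

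The main obstacle is exactly this last optimization at the level of constants: obtaining the precise factor $\tfrac1{16}$, the shift $t-\tfrac12$ (rather than $t-1$), and the exact argument $4\gamma-2$ of the logarithm requires the careful Chebyshev bookkeeping of \cite[Thm.~4.2]{KuczynskiWo92}---in particular, the crude balancing sketched above introduces a spurious additive $\log t$ inside the logarithm that their sharper polynomial removes. For this reason the economical route is to cite \cite[Thm.~4.2]{KuczynskiWo92} directly: that result is stated for the Lanczos estimate of the smallest eigenvalue and transfers verbatim through the reduction above, the hypotheses $u^TMu=0$, $M\succeq0$ providing the required null eigenvector and $\gamma=\norm{v}^2/(u^Tv)^2$ being precisely the inverse squared cosine between $v$ and $u$ appearing there.
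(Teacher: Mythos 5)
Your proposal ultimately reduces to citing \cite[Thm.~4.2]{KuczynskiWo92}, which is exactly what the paper does: the lemma is stated as attributed to that theorem, with the detailed polynomial calculation deferred to \cite[App.~C.1]{CarmonDu18}, so the routes coincide. Your preparatory reduction is correct---$M\succeq 0$ and $u^TMu=0$ give $Mu=0$, and $\gamma = \|v\|^2/(u^Tv)^2$ is the squared secant of the angle between $v$ and $u$---though note that Kuczynski--Wozniakowski's theorem targets the \emph{largest} eigenvalue rather than the smallest, so the trivial substitution $M\mapsto\|M\|I-M$ (which preserves $\Krylov[t][M,v]$) is implicitly in play.
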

\noindent
The final preliminary result we require is based on a variant
of Nesterov's accelerated gradient method due to Tseng~\cite{Tseng08},
whose iterates lie in the Krylov subspace.
\begin{lemma}[Convex trust-region problem]
  \label{lem:agd}
  Let $t\ge1$, $M\succeq 0$, $v\in \R^d$ and $r\ge 0$, and let 
  $\f[M,v](x) 
  =\half x^T M x + v^T x$. There exists $x_t \in 
  \Krylov[t][M,v]$ such that 
  \begin{equation*}
    \norm{x_t} \le r
    ~~\mbox{and}~~
    \f[M,v](x_t) - \min_{\norm{x}\le r} \f[M,v](x) 
    \le \frac{4\lambda_{\max}(M) \cdot r^2}{(t+1)^2}.
  \end{equation*}
\end{lemma}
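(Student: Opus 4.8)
The plan is to recognize $\min_{\norm{x}\le r}\f[M,v](x)$ as a convex, smooth, constrained minimization problem and to solve it with an accelerated projected-gradient method whose iterates happen to lie in the Krylov subspace. Since $M\succeq 0$, the quadratic $\f[M,v]$ is convex with $\lambda_{\max}(M)$-Lipschitz gradient $\grad \f[M,v](x) = Mx + v$, and the ball $B_r \defeq \{x : \norm{x}\le r\}$ is closed and convex, so a minimizer $\bar x \in \argmin_{\norm{x}\le r}\f[M,v](x)$ exists with $\norm{\bar x}\le r$. (If $r=0$ the claim is trivial.)

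Next I would run Tseng's variant of Nesterov's accelerated gradient method~\cite{Tseng08} (equivalently, Nesterov's accelerated \emph{projected} gradient method) on this problem with step size $1/\lambda_{\max}(M)$ and initialization $x_0 = 0$. The crucial structural observation is that every operation of the scheme preserves the Krylov subspace: (i) the Euclidean projection onto $B_r$ is the radial scaling $y \mapsto \min\{1, r/\norm{y}\}\, y$, hence maps any linear subspace through $y$ into itself; (ii) a gradient step from a point $x \in \Krylov[k][M,v]$ lands in $\mathrm{span}\big(\Krylov[k][M,v]\cup\{Mx\}\big)\subseteq\Krylov[k+1][M,v]$, because $\grad\f[M,v](x) = Mx+v$ and $v\in\Krylov[1][M,v]$; and (iii) the interpolation updates of the accelerated scheme are affine combinations of previously computed points. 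Starting from $x_0=0$ (which lies in every $\Krylov[k][M,v]$) and using that $\grad\f[M,v](0)=v$ spans $\Krylov[1][M,v]$, an induction on the iteration count shows that after $t$ iterations all iterates — in particular the returned point $x_t$ — lie in $\Krylov[t][M,v]$, and $x_t$ is feasible, $\norm{x_t}\le r$, as the output of a projection onto $B_r$.

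It then remains to invoke the standard convergence guarantee for accelerated projected gradient descent on $L$-smooth convex objectives over a convex set: after $t$ iterations, $\f[M,v](x_t) - \f[M,v](\bar x) \le \frac{2\lambda_{\max}(M)\norm{x_0 - \bar x}^2}{(t+1)^2}$. Since $x_0=0$ and $\norm{\bar x}\le r$, this is at most $\frac{2\lambda_{\max}(M) r^2}{(t+1)^2}\le \frac{4\lambda_{\max}(M) r^2}{(t+1)^2}$, which is exactly the claimed bound; the factor $4$ is deliberately loose to absorb the precise indexing constants of whichever accelerated scheme is used.

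The step I expect to require the most care is not any estimate but the bookkeeping in the second paragraph: checking that \emph{all} auxiliary sequences of the accelerated method (not just the primary iterate) remain within the Krylov subspace, and in particular that the projection step does not leave it — this works precisely because $B_r$ is centered at the origin, so the projection is a radial scaling. One must also line up the indexing conventions so that $t$ matrix–vector products (gradient evaluations at $y_0,\dots,y_{t-1}$) suffice to place $x_t$ in $\Krylov[t][M,v]$ with the stated $(t+1)^{-2}$ rate. None of this is deep, but it is where essentially all the content of the lemma resides.
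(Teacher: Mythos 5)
Your proposal is correct and follows essentially the same route as the paper: the paper also derives this lemma from Tseng's variant of Nesterov's accelerated (projected) gradient method, using the observation that its iterates lie in the Krylov subspace because the ball is centered at the origin (so projection is radial scaling) and each gradient evaluation increases the Krylov index by one. Your bookkeeping of the sub-sequences, the use of $\norm{x_0-\bar x}\le r$, and the absorption of indexing constants into the factor $4$ all match the paper's argument in spirit.
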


We can now provide the proof of Theorem~\ref{theorem:tr-krylov}. In the
proof, we let $\mc{P}_t$ denote all polynomials of degree
at most $t - 1$.

\subsection{Linear convergence}\label{sec:upper-lin-proof}
Recalling the notation $\As = A + \ltr I$, let $y_t = -p(\As)b = p(\As)\As 
\str$, for the $p\in\mc{P}_t$ which 
Lemma~\ref{lem:lin-cheby} guarantees to satisfy $\opnorm{p(\As)\As - 
  I}\le 2e^{-2t/\sqrt{\kappa(\As)}}$. Let
\begin{equation*}
  x_t = (1-\alpha) y_t,
  ~\mbox{where}~
  \alpha = \frac{\norm{y_t} - \norm{\str}}{\max\{\norm{\str},\norm{y_t}\}},
\end{equation*}
so that we are guaranteed $\norm{x_t}\le\norm{\str}$ for any value of 
$\norm{y_t}$. Moreover
\begin{equation*}
|\alpha| = 
\frac{|\norm{y_t}-\norm{\str}|}{\max\{\norm{\str},\norm{y_t}\}} \le 
\frac{\norm{y_t - \str}}{\norm{\str}} =
\frac{\norm{(p(\As)\As-I)\str}}{\norm{\str}} 
\le 2e^{-2t/\sqrt{\kappa(\As)}},
\end{equation*}
where the last transition used $\opnorm{p(\As)\As - 
	I}\le 2e^{-2t/\sqrt{\kappa(\As)}}$.

Since $b = -\As\str$, we have 
$\f[\As,b](x) = \f[\As,b](\str) + \half\norms{\As^{1/2}(x-\str)}^2$. 
The equality~\eqref{eq:tr-gamma-pivot-outline} with $\lambda = \ltr$ and 
$\norm{x_t}\le 
\norm{\str}$ therefore implies
\begin{equation}\label{eq:tr-lin-subopt-bound}
\f(x_t) - \f(\str) \le \half\norm{\As^{1/2}(x_t - \str)}^2 + 
\ltr\norm{\str}(\norm{\str} - \norm{x_t}).
\end{equation}
When $\norm{y_t} \ge \norm{\str}$ we have $\norm{x_t} = \norm{\str}$ 
and the second term vanishes. When $\norm{y_t} < \norm{\str}$,
\begin{flalign}\label{eq:tr-lin-norm-diff}
\norm{\str} - \norm{x_t} & = \norm{\str} - \norm{y_t} - 
\frac{\norm{y_t}}{\norm{\str}}\cdot (\norm{\str} - \norm{y_t} )
\nonumber\\& = \norm{\str}\alpha^2 \le 
4e^{-4t/\sqrt{\kappa(\As)}}\norm{\str}.
\end{flalign}
We also have
\begin{flalign}
  \label{eq:tr-linear-quad}
  \norm{\As^{1/2}(x_t - \str)}  &= 
    \norm{\left([1-\alpha]p(\As)\As-I\right)\As^{1/2}\str}
\nonumber \\
  & \le (1+|\alpha|)\norm{\left(p(\As)\As-I\right)\As^{1/2}\str} + 
  |\alpha| \norm{\As^{1/2}\str} %
  \nonumber \\& \le 6 \norm{\As^{1/2}\str}  e^{-2t/\sqrt{\kappa(\As)}},
\end{flalign}
where in the final transition we used our upper bounds on $\alpha$ 
and $\opnorm{p(\As)\As - I}$, as well as $|\alpha|\le 1$.
Substituting the bounds~\eqref{eq:tr-lin-norm-diff}
and~\eqref{eq:tr-linear-quad} into
inequality~\eqref{eq:tr-lin-subopt-bound}, we have
\begin{equation}
  \label{eq:tr-lin-time-bound-stronger}
  \f(x_t) - \f(\str) \le \left(18 (\str)^T \As \str +  
  4\ltr\norm{\str}^2\right) 
  e^{-4t/\sqrt{\kappa(\As)}},
\end{equation}
and the final bound follows from recalling that 
$\f(0)-\f(\str) = \half (\str)^T \As \str + \frac{\ltr}{2}\norm{\str}^2$ 
and substituting $\kappa(\As) = (\lmax + \ltr)/(\lmin + \ltr)$. 
To conclude the proof we note 
that 
$(1-\alpha)p(\As) = (1-\alpha)p(A + \ltr I) = \tilde{p}(A)$ for some 
$\tilde{p} \in \mc{P}_t$, so that $x_t \in \mc{K}_t(A, b)$ and 
$\norm{x_t}\le R$, 
and therefore $\f(\itertr_t) \le \f(x_t)$.

\subsection{Sublinear convergence}\label{sec:upper-sub-proof}
Let $\Agam \defeq  A - \lmin I \succeq 0$ and apply 
Lemma~\ref{lem:agd} with $M=\Agam$, $v=b$ and $r=\norm{\str}$ to 
obtain $y_t \in \Krylov[t][\Agam, b] = \Krylov$ such that 
$\norm{y_t}\le 
\norm{\str}$ and
\begin{equation}
  \label{eq:tr-sublin-yt}
  \f[\Agam,b](y_t) - \f[\Agam,b](\str) \le 
  \f[\Agam,b](y_t) - \min_{\norm{x}\le\norm{\str}}\f[\Agam,b](x) \le 
  \frac{4\opnorm{\Agam}\norm{\str}^2}{(t+1)^2}.
\end{equation}
If $\lmin \ge 0$, equality~\eqref{eq:tr-gamma-pivot-outline} with 
$\lambda=-\lmin$ along with~\eqref{eq:tr-sublin-yt} means we are done, 
recalling that $\opnorm{\Agam} = \lmax-\lmin$. 
For $\lmin< 0$, apply 
Lemma~\ref{lem:eigenvec-tight} with $M=\Agam$ and $v=b$ to obtain 
$z_t\in\Krylov$ such that
\begin{equation}\label{eq:tr-sublin-zt}
\norm{z_t} = 1
~~\mbox{and}~~
z_t^T \Agam z_t \le \frac{\opnorm{\Agam}}{16(t-\half)^2}
\log^2\left(4\frac{\norm{b}^2}{(\vmin^T b)^2}\right).
\end{equation}
We form the vector
\begin{equation*}
x_t = y_t + \alpha \cdot z_t\in\mc{K}_t(A,b),
\end{equation*}
and choose $\alpha$ to satisfy
\begin{equation*}
\norm{x_t} = \norm{\str}
~~\mbox{and}~~
\alpha \cdot z_t ^T (\Agam y_t  + b) = 
\alpha \cdot z_t ^T  \grad \f[\Agam,b](y_t) \le 0.
\end{equation*}
We may 
always choose such an $\alpha$, as $\norm{y_t}\le\norm{\str}$ and 
therefore $\norm{y_t + \alpha z_t} = \norm{\str}$ has both a 
non-positive and a non-negative solution in $\alpha$. Moreover because 
$\norm{z_t}=1$ 
we have that $|\alpha| \le 2\norm{\str}$. 
The property 
$\alpha \cdot z_t ^T  \grad \f[\Agam,b](y_t) \le 0$ of our 
construction of $\alpha$ along with $\hess \f[\Agam,b] = \Agam$
gives
\begin{equation*}
  \f[\Agam,b](x_t) = \f[\Agam,b](y_t) + \alpha \cdot z_t ^T  \grad 
  \f[\Agam,b](y_t) + \frac{\alpha^2}{2} z_t^T \Agam z_t
  \le \f[\Agam,b](y_t) + \frac{\alpha^2}{2} z_t^T \Agam z_t.
\end{equation*}
Substituting this bound along with $\norm{x_t}=\norm{\str}$ and 
$\alpha^2 \le 4\norm{\str}^2$ into~\eqref{eq:tr-gamma-pivot-outline} 
with $\lambda=-\lmin$ gives
\begin{equation}
  \label{eqn:last-line-sublinear}
  \f(x_t) - \f(\str) \le \f[\Agam,b](y_t) - \f[\Agam,b](\str) + 
  2\norm{\str}^2 
  z_t^T \Agam z_t.
\end{equation}
Substituting the bounds~\eqref{eq:tr-sublin-yt} 
and~\eqref{eq:tr-sublin-zt} concludes the proof for the case $\lmin < 
0$.
\section{Proofs of randomization strategies}

\subsection{Proof of Corollary~\ref{corr:gradient-pert}}
\label{sec:proof-gradient-pert}
Throughout this proof, we use the notational shorthand $f = \fcu$.
Corollary~\ref{corr:gradient-pert} follows from three basic observations 
about
the effect of adding a small uniform perturbation to $b$, which we summarize
in the following lemma (see Section~\ref{sec:proof-rand-prop} for a proof).

\newcommand{\rhosol}{\rho \norms{\sol}}

\begin{lemma}
  \label{lem:rand-prop}
  Set $\tilde{b}=b+\sigma \univar$, where $\univar \sim
  \uniform(\sphere^{d-1})$ and $\sigma>0$.  Let
  $\tilde{f}\left(x\right)=\frac{1}{2}x^{\T}Ax+\tilde{b}^{\T}x+\frac{1}{3}\rho\norm{
    x}^3$ and let $\soltilde$ be a global minimizer of $\tilde{f}$. Then, the 
    following holds 
  for any $\delta > 0$:
  \begin{enumerate}[label=(\roman*)]
  \item For $d > 2$, $\P( |\tilde{b}\supind{1}|
    \le \sqrt{\pi}\sigma\delta/\sqrt{2 d}) \leq\delta$.
  \item $| f(x)-\tilde{f}(x) | \le \sigma\norm{x}$
    for all $x\in\mathbb{R}^{d}$.
  \item $\big| \norm{\sol}^2- \norm{\soltilde}^2 \big| \leq2\sigma/\rho$.
  \end{enumerate}
\end{lemma}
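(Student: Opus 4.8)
The plan is to handle the three parts in turn; (i) and (ii) are short, while (iii) is the crux.

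Part~(ii) is immediate: $\tilde f$ and $f$ differ only in the linear term, so $f(x)-\tilde f(x)=(b-\tilde b)^{\T}x=-\sigma\,\univar^{\T}x$, and Cauchy--Schwarz with $\norm{\univar}=1$ gives $|f(x)-\tilde f(x)|=\sigma\,|\univar^{\T}x|\le\sigma\norm{x}$. For part~(i), write $\tilde b\supind1=\vmin^{\T}\tilde b=b\supind1+\sigma\,\vmin^{\T}\univar$, where $\vmin^{\T}\univar$ is distributed as a single coordinate of $\uniform(\sphere^{d-1})$, with density $t\mapsto c_d(1-t^2)^{(d-3)/2}\I_{\{|t|\le1\}}$ and $c_d=\Gamma(d/2)/(\sqrt\pi\,\Gamma((d-1)/2))$. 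For $d>2$ this density is maximized at $t=0$, and Gautschi's inequality $\Gamma(d/2)/\Gamma((d-1)/2)\le\sqrt{d/2}$ gives $c_d\le\sqrt{d/(2\pi)}$. Since $\{|\tilde b\supind1|\le\varepsilon\}$ corresponds to an interval of length $2\varepsilon/\sigma$ for the variable $\vmin^{\T}\univar$, we obtain $\P(|\tilde b\supind1|\le\varepsilon)\le(2\varepsilon/\sigma)\sqrt{d/(2\pi)}$, which is at most $\delta$ precisely when $\varepsilon\le\sqrt\pi\,\sigma\delta/\sqrt{2d}$.

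For part~(iii), set $r=\norm{\sol}$ and $\tilde r=\norm{\soltilde}$; the claim is symmetric under $(b,f)\leftrightarrow(\tilde b,\tilde f)$ (as $b=\tilde b+\sigma(-\univar)$), so assume WLOG $\tilde r\ge r$, the case $\tilde r=r$ being trivial. Since $\tilde r^2-r^2=(\tilde r-r)(\tilde r+r)\le 2\tilde r(\tilde r-r)$, it suffices to prove $\rho\tilde r(\tilde r-r)\le\sigma$. By Proposition~\ref{prop:characterization}, with $\As=A+\rho r I\succeq0$ and $\widetilde A_\star\defeq A+\rho\tilde r I\succeq0$ we have $\As\sol=-b$ and $\widetilde A_\star\soltilde=-\tilde b$; since $\widetilde A_\star=\As+\rho(\tilde r-r)I$ this gives $\widetilde A_\star\sol=-b+\rho(\tilde r-r)\sol$, hence
\begin{equation*}
  \soltilde-\sol=-\sigma\,\widetilde A_\star^{-1}\univar-\rho(\tilde r-r)\,\widetilde A_\star^{-1}\sol .
\end{equation*}
Applying the identity~\eqref{eq:fx-s-expression} to $\tilde f$ at $x=\sol$, using $\widetilde A_\star\succeq0$, the optimality inequalities $\tilde f(\soltilde)\le\tilde f(\sol)$ and $f(\sol)\le f(\soltilde)$, and part~(ii), gives
\begin{equation*}
  \frac{\rho}{6}\,\tilde r(\tilde r-r)^2\le\frac{\rho}{6}(\tilde r-r)^2(\tilde r+2r)\le\tilde f(\sol)-\tilde f(\soltilde)\le\sigma\,\univar^{\T}(\sol-\soltilde)\le\sigma\norm{\sol-\soltilde}.
\end{equation*}
Combining with the bound $\norm{\sol-\soltilde}\le(\sigma+\rho(\tilde r-r)r)/(\lmin+\rho\tilde r)$ from the display and $\opnorm{\widetilde A_\star^{-1}}\le(\lmin+\rho\tilde r)^{-1}$, then simplifying the resulting polynomial inequality in $\tilde r-r$ (using $\lmin+\rho\tilde r=(\lmin+\rho r)+\rho(\tilde r-r)\ge\rho(\tilde r-r)$), yields $\rho\tilde r(\tilde r-r)\le\sigma$ and hence the claim.

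The delicate point is extracting exactly the constant $2/\rho$: the bound is \emph{attained} in the hard-case limit ($b\to0$, $\lmin<0$, $\univar=\vmin$), so the chain above has essentially no slack there, and the estimate $\opnorm{\widetilde A_\star^{-1}}\le(\lmin+\rho\tilde r)^{-1}$ degenerates precisely as $\lmin+\rho r\to0$; keeping the constant sharp requires carefully balancing the near-singularity of $\lambda\mapsto\norm{(A+\lambda I)^{-1}\tilde b}$ at $\lambda=-\lmin$ against $\sigma$. An equivalent framing that may ease the bookkeeping is to use that the optimal-value function $F(b)=\min_x f(x)$ is concave with $\nabla F(b)=\sol(b)$ and $\nabla^2F(b)=-\big(\As+\tfrac{\rho}{\norm{\sol}}\sol\sol^{\T}\big)^{-1}$, so $\tfrac{d}{dt}\norm{\sol}^2=2\sol^{\T}\nabla^2F(b)w$ along a unit direction $w$, reducing the claim to $\norm{\big(\As+\tfrac{\rho}{\norm{\sol}}\sol\sol^{\T}\big)^{-1}\sol}\le1/\rho$, i.e.\ (after a short computation) to $\rho\norm{\sol}\,\norm{\As^{-1}\sol}\le\norm{\sol}+\rho\,\sol^{\T}\As^{-1}\sol$. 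Finally, the genuine hard case $b\supind1=0$, in which $\widetilde A_\star$ may be singular, is recovered by applying the regular-case bound to $b+\eta\vmin$ and letting $\eta\downarrow0$, using continuity of $b\mapsto\sol(b)$.
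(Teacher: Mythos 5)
Parts~(i) and~(ii) are fine and essentially match the paper's proof (which, confusingly, proves them under the labels ``(iii)'' and ``(ii)'' respectively in its proof text).

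Part~(iii) is where your attempt breaks down, and it is also where you and the paper diverge in method. Your first (direct) route does not close: combining $\frac{\rho}{6}\tilde r(\tilde r-r)^2 \le \sigma\norm{\sol-\soltilde}$ with $\norm{\sol-\soltilde}\le\big(\sigma+\rho(\tilde r-r)r\big)/(\lmin+\rho\tilde r)$ and $\lmin+\rho\tilde r\ge\rho(\tilde r-r)$ does not ``yield $\rho\tilde r(\tilde r-r)\le\sigma$.'' Even in the friendly case $\lmin=0$, so that $\lmin+\rho\tilde r=\rho\tilde r$ exactly, writing $u\defeq\rho\tilde r(\tilde r-r)\ge v\defeq\rho r(\tilde r-r)$ the combined inequality is only $\tfrac16 u^2\le\sigma^2+\sigma v\le\sigma^2+\sigma u$, hence $u\le(3+\sqrt{15})\sigma\approx6.9\sigma$ --- about a factor of seven too large --- so the ``simplifying the polynomial inequality'' step is a genuine gap, and this route at best proves a weaker constant. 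Your second route is, once unpacked, the paper's own argument: the paper applies the inverse function theorem to the stationarity map $\sol\mapsto b$ to obtain $\|\del\norm{\sol}^2/\del b\|=2\|(\grad^2\fcu(\sol))^{\dag}\sol\|$, which is exactly the concave-$F$ identity you write. But the remaining inequality you defer to ``a short computation,'' namely $\rho\norm{\sol}\,\norm{\As^{-1}\sol}\le\norm{\sol}+\rho\,\sol^{T}\As^{-1}\sol$, does not follow from $\As\succeq 0$ alone and needs real scrutiny. As a concrete check, take $\rho=1$, $\As=\diag(\epsilon,1)$ with $\epsilon>0$ small, and $\sol=(1/\sqrt2,1/\sqrt2)$; Sherman--Morrison gives $\|(\grad^2\fcu(\sol))^{-1}\sol\|=\|\As^{-1}\sol\|/(1+\sol^{T}\As^{-1}\sol)\to\sqrt2>1/\rho$ as $\epsilon\downarrow0$, so the ``short computation'' cannot go through as stated whenever $\sol$ is not aligned with an eigenvector of $\As$. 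This final inequality is precisely where the entire argument lives (the paper's proof also leans on it), and treating it as a computation to be filled in later is the central gap in your proposal.
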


With Lemma~\ref{lem:rand-prop} in hand, our proof proceeds in three parts:
in the first two, we provide bounds on the iteration complexity of each of
the modes of convergence that Theorem~\ref{theorem:gradient-descent-cubic}
exhibits in the perturbed problem with vector $\tilde{b}$. The final part
shows that the quality of the (approximate) solutions $\tilde{x}_t$ and
$\soltilde$ is not much worse than $\sol$.

Let $\tilde{f}, \tilde{b}$ and $\soltilde$ be as in
Lemma~\ref{lem:rand-prop}. By Theorem~\ref{theorem:gradient-descent-cubic},
$\tilde{f}(\tilde{x}_{t}) \le \tilde{f}(\soltilde) + \eps$ for
all
\begin{equation}
  \label{eqn:perturbed-time}
  \begin{split}
    t & \geq \frac{6}{\eta}
    \left(\log\left(1+\frac{\hinge{-\lmin}^{2}}{4\rho 
      |\tilde{b}\supind{1}|}\right)
    +\log\frac{ (\beta  + 
      2\rho\norm{\soltilde})\norm{\soltilde}^2}{\eps}\right) 
    \\ & \quad \quad  \times 
    \min\left\{\frac{1}{\rho 
      \norm{\soltilde} + \lmin},
    \frac{10 \norm{\soltilde}^2}{\eps}\right\}.
  \end{split}
\end{equation}
We now turn to bounding expression~\eqref{eqn:perturbed-time}.

\paragraph{Part 1: bounding terms outside the logarithm}
Recalling that $\sigma = \frac{\rho\sigbar\eps}{12(\beta+2\rhosol)}$ and 
$\eps \le (\half\beta + \rhosol)\norm{\sol}^2$, we have $\sigma \le 
\frac{\rho}{24}\sigbar\norm{\sol}^2$. Part (iii) of 
Lemma~\ref{lem:rand-prop} gives
\begin{equation*}
  |\norm{\sol}^2 - \norm{\soltilde}^2| \le 2\sigma/\rho \le \sigbar 
  \norm{\sol}^2/12,
  ~~ \mbox{so} ~~
  \norm{\soltilde}^2 \in(1 \pm \sigbar/12) \norm{\sol}^2.
\end{equation*}
Consequently, using $\sigbar\le1$ we have
\begin{equation*}
  \big| \norm{\sol} - \norm{\soltilde}\big|
  \le \frac{2 \sigma}{\rho (\norm{\sol} + \norm{\soltilde})}
  \le \frac{2 \sigbar \eps }{12(1+\sqrt{11/12})\norm{\sol}(\beta+2\rhosol)}
  \le \frac{ \sigbar \eps }{20\rho\norms{\sol}^2}~.
\end{equation*}
Now, suppose that $\frac{\eps}{10 \norm{\sol}^2} \le \rho \norm{\sol} + \lmin$.
Substituting this above yields
$|\norm{\sol} - \norm{\soltilde}|
\le \frac{\sigbar}{2\rho}(\rhosol + \lmin)$, and
rearranging, we obtain
\begin{equation*}
  \rho\norm{\soltilde}+ \lmin \ge \left(1-0.5\sigbar\right)(\rhosol + \lmin) 
  \ge 
  \frac{\rhosol + \lmin}{1+\sigbar}
\end{equation*}
because $\sigbar \le 1$.
We combine the preceding bounds to obtain
\begin{equation}
  \label{eqn:upper-bound-min-stuff}
  \min\left\{\frac{1}{\rho \norm{\soltilde} + \lmin},
    \frac{10 \norm{\soltilde}^2}{\eps}\right\}
  \le (1+\sigbar)\min\left\{\frac{1}{\rho \norm{{\sol}} + \lmin},
  \frac{10 \norm{{\sol}}^2}{\eps}\right\}
\end{equation}
where we have used $\norm{\soltilde} \le (1+\sigbar)\norms{\sol}^2$ and
$\norm{\soltilde} \ge \sqrt{1-\sigbar/12}\norms{\sol}^2 \ge
\norms{\sol}^2/(1+\sigbar)$.

\paragraph{Part 2: bounding terms inside the logarithm}
  Fix a confidence 
level  $\delta \in (0, 1)$. By Lemma~\ref{lem:rand-prop}(i),
$1 / |\tilde{b}^{\left(1\right)}| \leq \sqrt{2 d} / 
(\sqrt{\pi}\sigma\delta)\leq
\sqrt{d}/(\sigma\delta)$ with 
probability at least $1-\delta$, so
\begin{flalign*}
  6\log \left(1+\frac{\hinge{-\lmin}^2}{4 \rho |\tilde{b}\supind{1}|}\right)
  & \le
  6 \log \left(1 + \frac{\hinge{-\lmin}^2 \sqrt{d}}{4 \rho \sigma \delta}\right)
  \stackrel{(\star)}{\le} 6 \log \left(1 +\I_{\{\lmin<0\}}\frac{3\sqrt{d}}{ 
    \sigbar 
	\delta}\right) 
\\
 & \quad + 6\log\frac{ (\beta + 2\rhosol) \norms{\sol}^2}{\eps}
  = 6 \tiltgrow(\delta,\sigbar) + 6 \tiltconv(\eps),
\end{flalign*}
where inequality $(\star)$ uses that $\rhosol \ge \hinge{-\lmin}$ and 
$\eps \le (\beta + \half \rhosol)\norms{\sol}^2$.
Using $\norm{\soltilde} \le \sqrt{1+\sigbar/12}\norm{\sol}$ yields the upper 
bound
\begin{equation*}
  6\log \frac{ (\beta + 2\rho \norm{\soltilde}) \norm{\soltilde}^2}{\eps}
  \le 
  6\log \frac{ (\beta + 2\rhosol) \norm{{\sol}}^2}{\eps}
  + 9\log(1+\sigbar/12)
  \le 8 \tiltconv(\eps),
\end{equation*}
where the second inequality follows as $9\log(1+\sigbar/12)<2\log2\le 
2\log\frac{ (\beta + 2\rhosol) \norm{{\sol}}^2}{\eps}$.

Substituting the above bounds and
the upper bound~\eqref{eqn:upper-bound-min-stuff} 
into expression~\eqref{eqn:perturbed-time} gives
the iteration bounds
in Corollary~\ref{corr:gradient-pert}. To complete
the proof we need only bound the quality of the solution $\tilde{x}_t$.

\paragraph{Part 3: solution quality}
 We recall that $\sigma = 
\frac{\rho\sigbar\eps}{12(\beta+2\rhosol)}\le \frac{\sigbar\eps}{24\norm{\sol}}$ and 
$\norm{\soltilde} \le \sqrt{1+\sigbar/12} \norm{\sol} \le 
\sqrt{2}\norm{\sol}$, so $\sigma \le 
\frac{\sigbar\eps}{\norm{\sol}+\norm{\soltilde}}$. Thus, whenever 
$\tilde{f}(\tilde{x}_t) \le \tilde{f}(\soltilde) + \eps$,
\begin{align*}
  f(\tilde{x}_{t})
  & \overset{\text{(a)}}{\leq} \tilde{f}(\tilde{x}_{t}) + \sigma\norm{\tilde{x}_{t}}
    \leq\tilde{f}(\soltilde) + \eps +\sigma\norm{\tilde{x}_{t}}
    \overset{\text{(b)}}{\leq} \tilde{f}(\soltilde) + \eps
    +\sigma\norm{\soltilde} \\
  & \overset{\text{(c)}}{\leq}\tilde{f}(\sol) + \eps +\sigma\norm{\soltilde}
    \overset{\text{(d)}}{\leq} f(\sol) + \sigma(\norm{\soltilde} + \norm{\sol}) + 
    \eps
  \le f(\sol) + (1+\sigbar)\eps,
\end{align*}
where transitions (a) and (d) follow from part (ii) of  
Lemma~\ref{lem:rand-prop}, transition (b) follows from $\norm{\tilde{x}_t} \le 
\norm{\soltilde}$ (Lemma~\ref{lemma:monotone}), and transition (c) 
follows from  $\tilde{f}(\soltilde) = \inf_{z\in \R^d}\tilde{f}(z)$.

\subsection{Proof of Lemma~\ref{lem:rand-prop}}
\label{sec:proof-rand-prop}

To establish part (i) of the lemma, note that marginally
$[\univar^{\left(1\right)}]^2\sim \betadist(\half,\frac{d-1}{2})$ and that
$\univar^{\left(1\right)}$ is symmetrically distributed about 0. Therefore, 
for $d>2$ the density of
$\tilde{b}^{\left(1\right)}=b^{\left(1\right)}+\sigma \univar^{\left(1\right)}$ is
maximal at $b^{\left(1\right)}$ and is monotonically decreasing in the
distance from $b^{\left(1\right)}$. Therefore we have
\begin{align}\label{eq:beta-tail-bound}
  \P\left(|\tilde{b}\supind{1}| \le \sigma\sqrt{\pi}\delta/\sqrt{2 d}\right) \le  
  \P\left(|\univar\supind{1}| \le \sqrt{\pi}\delta / \sqrt{2 d} \right) \le \delta,
\end{align}
where the bound $p_1(u) \le \sqrt{d / (2\pi u)}$
on the density $p_1$ of $\univar\supind{1}$ yields the last inequality.

Part (iii) of the lemma is immediate, as
\begin{equation*}
  |f\left(x\right)-\tilde{f}\left(x\right) | =
  |(b-\tilde{b})^{\T}x|\leq\sigma\norm{\univar} \norm{
    x} =\sigma \norm{x}.
\end{equation*}

Part (ii) of the lemma follows by viewing $\norms{\sol}^2$ as a function of
$b$ and noting that $b \mapsto \norms{\sol}^2$ is $2/\rho$-Lipschitz
continuous. To see this claim, we use the inverse function theorem.  First,
$\norms{\sol}^2$ is a function of $b$, because $\sol$
may be non-unique only when $\norms{\sol} = \hinge{-\lmin}/\rho$ (see
Proposition~\ref{prop:characterization}).  Next, from the relation $b=-\As\sol$,
the inverse mapping $\sol \mapsto b$ is smooth with
Jacobian
\begin{equation*}
  \frac{\del b}{\del \sol}=-\As-\rho\frac{\sol(\sol)^{\T}}{\norm{\sol}}
    =-\grad^2f\left(\sol\right).
\end{equation*}
Let us now evaluate $\del\norm{\sol}^2/\del b$ when
the mapping $\sol \mapsto b(\sol) = -(A + \rho \norm{\sol} I) \sol$
is invertible, i.e.\ when $\norm{\sol} > \hinge{-\lmin}/\rho$;
the inverse function theorem yields
\begin{equation*}
  \frac{\del\norm{\sol}^2}{\del 
  b}=\frac{\del\left((\sol)^{\T}\sol\right)}{\del 
  b}=2\frac{\del \sol}{\del 
  b}\sol=-2\left(\grad^2f\left(\sol\right)\right)^{-1}\sol.
\end{equation*}
The mapping $\sol \mapsto (\grad^2f(\sol))^\dag \sol$ is continuous in $\sol$ even when
$\As\succeq 0$ is singular, and therefore the preceding expression is valid
(as the natural limit) when $\norm{\sol} \to (-\lmin)_+ / \rho$.  Moreover, 
since
$\grad^2f\left(\sol\right)\succeq\rho \sol (\sol)^T / \norm{\sol}$, we have
\begin{equation*}
  \normbigg{\frac{\del\norm{\sol}^2}{\del b}}
  =2\norm{\left(\grad^2f\left(\sol\right)\right)^{\dag}\sol}
  \leq2\norm{\left(\rho \sol(\sol)^{\T} /\norm{\sol} \right)^{\dag}\sol} 
  =\frac{2}{\rho}.
\end{equation*}
We thus conclude that $b \mapsto \norm{\sol}^2$ is a $2/\rho$-Lipschitz
continuous function of $b$, and therefore 
$|\norm{\sol}^2-\norm{\soltilde}^2 |
\leq\left(2/\rho\right) \| b-\tilde{b}\| =2\sigma/\rho$.

\subsection{Proof of Corollary~\ref{cor:tr-rand-joint}}
\label{sec:proof-krylov-random}

We prove only the trust region guarantee; the other is then an immediate
consequence of the fact that optimality gaps in the trust region problem
bound those in the cubic-regularized problem (recall
Sec.~\ref{sec:krylov-cubic}). We revisit the proof of sublinear convergence
in Sec.~\ref{sec:upper-sub-proof}, noting that if
$\lmin \ge 0$, the corollary is immediate, so we need consider only the
case that $\lmin < 0$. Let $z \in
\Krylov[t][A,\univar]$ be the vector~\eqref{eq:tr-sublin-zt} that
Lemma~\ref{lem:eigenvec-tight} guarantees and let $y \in
\Krylov[t][A,b]$ be the vector~\eqref{eq:tr-sublin-yt} that
Lemma~\ref{lem:agd} guarantees. Then for $\jittertr_t$ in the corollary, we
have as in the final inequality~\eqref{eqn:last-line-sublinear} of the
sublinear convergence proof that
\begin{align*}
  \f(\jittertr_t) - \f(\str)
  & \le \f[\Agam,b](y) - \f[\Agam](\str)
  + 2 \norm{\str}^2 z^T \Agam z \\
  & %
    \le \frac{4 \opnorm{\Agam}R^2}{
    (t + 1)^2}
  + \frac{\opnorm{\Agam} R^2 }{8
    (t - \half)^2}
  \log^2 \left(4 \frac{1}{(\univar\supind{1})^2}\right),
\end{align*}
where $\Agam = A - \lmin I$.
Now we recognize that $\opnorm{\Agam} = \opnorm{A - \lmin I} = \lmax 
- \lmin$
and that by the rotational symmetry of $\univar$, we have
$(\univar\supind{1})^2
\sim \betadist(\half, \frac{d-1}{2})$. Thus
$(\univar\supind{1})^2\ge \frac{\pi}{2}\cdot\frac{\delta^2}{d}\ge 
\frac{\delta^2}{d}$
with probability at least $1-\delta$ (recall Eq.~\eqref{eq:beta-tail-bound}).
\section{Numerical experiment details}\label{sec:exp-details}
We provide details on the random problem 
instances for the experiments in Section~\ref{sec:exp-rates} 
and~\ref{sec:exp-rand}.

\paragraph{Random problem generation, $\kappa < \infty$}
We generate random cubic 
regularization instances $(A, b, \rho)$ as 
follows. We take $\lmax = 1$ and draw $\lmin\sim \uniform[-1,-0.1]$.
We then fix two 
eigenvalues of $A$ to be $\lmin,\lmax$ and draw the other $d-2$ 
eigenvalues i.i.d.\ $\uniform[\lmin, \lmax]$. We take $A$
diagonal with said eigenvalues; this is without much loss of generality (as 
the methods are rotationally invariant), and it allows us to 
quickly compute matrix-vector products.

For a desired condition number $\kappa$, we let
\begin{equation*}
\ltr \defeq \frac{\lmax - \kappa\lmin}{\kappa - 1}
\end{equation*} 
and as usual denote $\Atr = A + \ltr I$. 
To generate $b$, $\rho$, we draw a standard normal $d$-dimensional 
vector 
$v\sim \mc{N}(0; I)$ and let
\begin{equation*}
b =  \sqrt{\frac{2}{v^T \Atr^{-1} v + 
		\frac{\ltr}{3} v^T\Atr^{-2} v} }\cdot  v
~,~
\rho = \frac{\ltr}{\norms{\Atr^{-1}b}},
\end{equation*}
The above choice of $b$ and $\rho$ guarantees that $\rho 
\norm{\Atr^{-1}b} = \ltr$, so $\scu = -\Atr^{-1}b$ is the unique 
solution and the problem condition number satisfies
\begin{equation*}
\frac{\lmax+\rs}{\lmin+\rs} = \frac{\lmax + \ltr}{\lmin + \ltr} = \kappa
\end{equation*}
as desired. Moreover, our scaling of $b$ guarantees that
\begin{equation*}
\fcu(0)-\fcu(\scu) = \half (\scu)^T \Atr \scu + \frac{\rho}{6}\norm{\scu}^3
=
\half \left(b^T \Atr^{-1} b + \frac{\ltr}{3}b^T \Atr^{-2} b\right) =1.
\end{equation*}

For every value of $\kappa$, we generate 5,000 independent problem instances.

\paragraph{Random problem generation, $\kappa = \infty$} We let 
$A=\diag(\lambda)$ where $\lambda_1 = \lmin = -0.5$, $\lambda_d = 
\lmax = 0.5$ and $\lambda_2, \ldots, \lambda_{d-1}$ are i.i.d.\
$\uniform[\lmin + \gamma, \lmax]$ where we take the eigen-gap 
$\gamma=10^{-4}$ and $d=10^6$. 
As $\kappa=\infty$, we let
\begin{equation*}
\ltr = -\lmin
\end{equation*}
and denote $\hat{A}_{\ltr} \defeq \diag(\lambda_2+\ltr, \ldots, 
\lmax+\ltr)$. 
We generate $b$ and $\rho$ by drawing a standard normal 
$(d-1)$-dimensional vector $v$, and letting
\begin{equation*}
b_1 = 0
~,~
b_{2:d} = \sqrt{\frac{2}{v^T \hat{A}_{\ltr}^{-1} v + 
		(1+\tau^2)\frac{\ltr}{3} v^T\hat{A}_{\ltr}^{-2} v} } v
~,~
\rho = \frac{\ltr}{\norms{\hat{A}_{\ltr}^{-1}b_{2:d}}\sqrt{1+\tau^2}},
\end{equation*}
where $\tau$ is a parameter that determines the weight of the eigenvector 
corresponding to $\lmin$ in the solution (when $\tau = \infty$ we have a 
pure eigenvector instance); we take $\tau=10$. A global minimizer $\scu$ 
of this problem instance $(A,b,\rho)$ has the form
\begin{equation*}
[\scu]_1 = \pm \tau \norms{\hat{A}_{\ltr}^{-1}b_{2:d}}
~,~
[\scu]_{2:d} = -\hat{A}_{\ltr}^{-1}b_{2:d}.
\end{equation*}
As in the case $\kappa < \infty$, the scaling of $b$ 
guarantees 
$\fcu(0)-\fcu(\scu) = 1$.

When $\kappa=\infty$, the eigen-gap $\gamma = \lambda_2 - \lmin$ strongly
affects optimization performance. We explore this in
Figure~\ref{fig:exp-gap}, which repeats the experiment above with different
values of $\gamma$ (and $d=10^5$). As the figure shows, the non-randomized
Krylov subspace solution becomes more suboptimal as $\gamma$ increases,
which is expected: when $\gamma$ is large, finding the components of $\scu$
in the direction $\vmin$ becomes more important. Randomization ``kicks-in''
with linear convergence after roughly $\log d / \sqrt{\gamma}$ iterations.

To create each plot, we draw 10 independent problem instances from the 
distribution described above, and for each problem instance we run each 
randomization approach with 50 different random seeds; we observe that 
sampling problem instances and randomization seeds each contribute 
similar variation in the final ensemble of results.

\begin{figure}
	\centering
	\includegraphics[width=0.9\columnwidth]
	{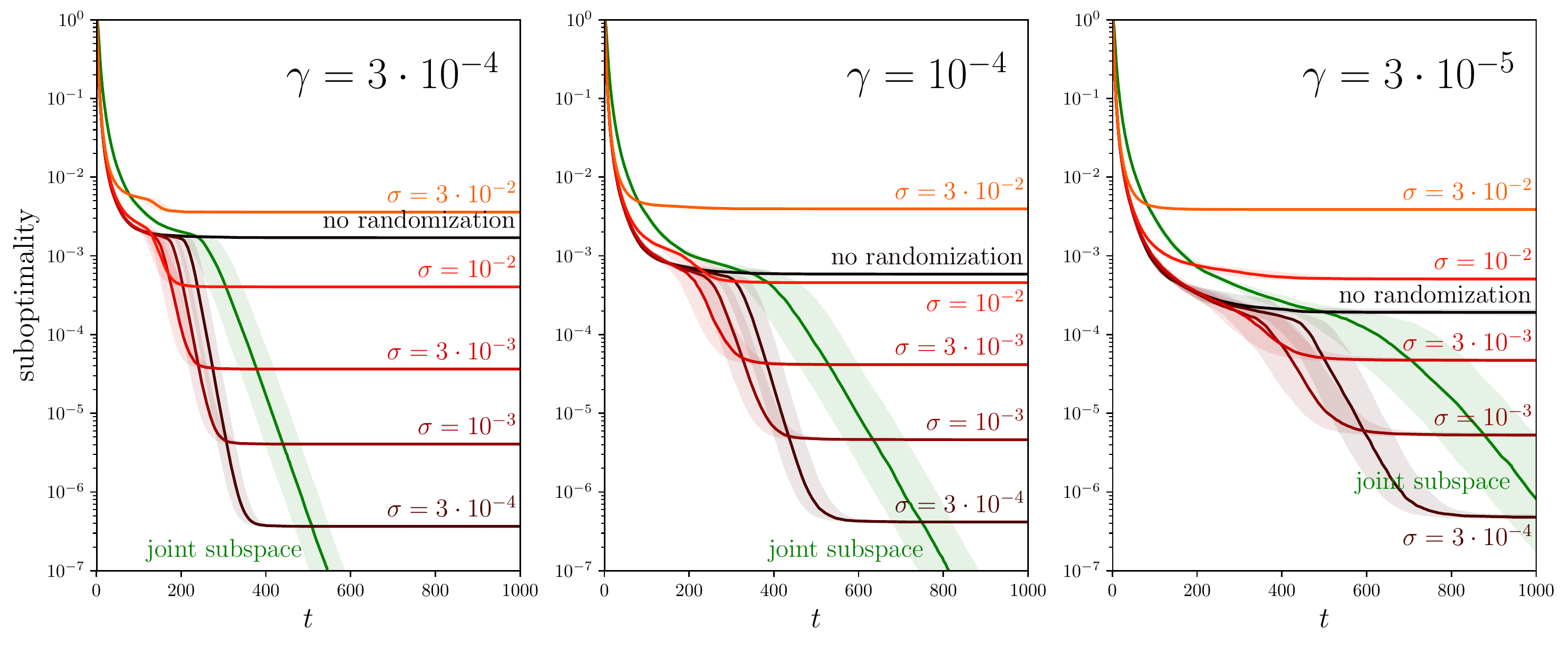}
	\caption{\label{fig:exp-gap}Optimality gap of 
		Krylov subspace solutions on random 
		cubic-regularization problems, versus subspace 
		dimension $t$. Each plot shows result for problem instances with a 
		different eigen-gap $\gamma = (\lmax - \lmin)/(\lambda_2 - 
		\lmin)$, where $\lambda_2$ is the smallest eigenvalue larger than 
		$\lmin$. Each line represents median suboptimality, and shaded 
		regions represent inter-quartile range. Different lines 
		correspond to different randomization settings.
	}
\end{figure}

\paragraph{Computing Krylov subspace solutions}
We use the Lanczos process to obtain a tridiagonal representation of $A$ as 
described in Section~\ref{sec:lanczos}. To obtain full optimization traces 
we solve equation~\eqref{eq:lambda-search} after every Lanczos 
iteration, warm-starting $\lambda$ with the solution from the previous 
step and the minimum eigenvalue of the current tridiagonal matrix. 
We use the Newton method described by~\citet[Algorithm 
6.1]{CartisGoTo11} to solve the equation~\eqref{eq:lambda-search} in 
the Krylov subspace. For the $\kappa < \infty$ experiment, we stop 
the process when
$|\norm{A_{\lambda}^{-1}b}-\lambda/\rho| < 10^{-12}$ or after 25 
tridiagonal system solves are computed. For the $\kappa = \infty$ 
experiment we allow up to 100 system solves.

\section{Proofs from Section \ref{sec:majorization}}

\subsection{Proof of Lemma~\ref{lem:SSP}}
\label{sec:proof-SSP}
Recalling the notation $\As = A + \rho \norm{\scu} I$ and that
$(\scu)^T \As \scu = -b^T \scu$,
the minimal value of $\fcu$ admits the
bound
\begin{equation*}
  \fcu(\scu) 
  = - \half ({\scu})^{\T}\As\scu - \frac{\rho\norm{\scu}^3}{6}
  \leq-\frac{\rho\norm{\scu}^{3}}{6}.
\end{equation*}
Corollary~\ref{cor:tr-rand-joint} thus implies that the output of $x = 
\callSSP{A,b,\rho,\beta,\delta}$  satisfies
\begin{flalign*}
\fcu(x) &\le  -\frac{1}{6}\rho \norm{\scu}^3 + 
 \frac{\beta \norm{\scu}^2}{\Tinner^2}
\left[ 
4 + \frac{\I_{\{\lmin < 0\}}}{2}
\log^2\left(\frac{4 d}{\delta^2}\right)
\right] 
\\ &
\le -\frac{1}{6}\rho \norm{\scu}^3 + \frac{1}{12} \rho 
r\norm{\scu}^2,
\end{flalign*}
where we substituted $\Tinner \ge \sqrt{\frac{12 \beta}{\rho \slb} \left(4 +
  \half \log^2 \frac{4d}{\delta^2}\right)}$, with
probability at least $1 - \delta$. Consequently, $\norm{\scu}\ge 
r$ implies $\fcu(x) \le -\frac{1}{12}\rho \norm{\scu}^3 \le  
-\frac{1}{12}\rho r^3$. 
	
\subsection{Proof of Lemma~\ref{lem:SFSP}}
\label{sec:proof-SFSP}

Let $f(z)=\half z^T  (A+2\rho r I)z + b^T z$, let $x\subopt = -(A+2\rho r 
I)^{-1} b$ be the minimizer of $f$, and let $x= 
\callSFSP{A,b,\rho,\beta,\epsgrad}$. Note that $x$ is the output of 
$\Tfinal$ conjugate gradient (CG) steps for minimizing $f$.  That 
$\norm{x} \le \norm{x\subopt}$ holds as CG iterates have 
nondecreasing norm and converge to $x\subopt$~\cite[Theorem 
2.1]{Steihaug83}. Moreover, bounds on $f(x)-f(x\subopt)$ follow from 
standard convergence analysis of CG; for convenience we simply apply the 
first bound of Corollary~\ref{cor:cu} with $\rho=0$, obtaining
\begin{equation*}
f(x)-f(x\subopt) \le 36[ f(0) -f(x\subopt)] \exp\left\{-4\Tfinal 
\sqrt{\frac{\rho r}{\beta + 2\rho r}}\right\} \le 
\frac{\epsgrad^2}{2(\beta+2\rho r)},
\end{equation*}
where we substituted $\Tfinal \ge \frac{1}{4}\sqrt{\frac{\beta+2\rho r}{\rho 
r}} \log \frac{36(\beta+2\rho r)^2 
	r^2}{\epsgrad^2}$ and $f(0) -f(x\subopt) = \half 
x\subopt^T (A+2\rho r I) x\subopt \le \half (\beta + 2\rho r) r^2$. Using 
also
\begin{equation*}
\norm{(A+2\rho r I)x + b} = \norm{\grad f(x)} \le \sqrt{2(\beta + 2\rho 
r)(f(x)-f(x\subopt))},
\end{equation*}
we get $\norm{(A+2\rho r I)x + b} \le \epsgrad$, whence the final bound 
$\norm{Ax + b} \le \epsgrad + 2\rho r^2$ follows by substituting 
$\norm{x}\le r$. 

\subsection{Proof of Proposition~\ref{prop:SP}}
\label{sec:proof-SP}

The second result is
inequality~\eqref{eqn:nesterov-polyak-progress}.
For the first,
we argue three facts: first, that the consequences of
Lemma~\ref{lem:SSP} hold in each call to \callSSP{};
second, that when the algorithm terminates it returns an approximate
second-order stationary point; and third, that the total number of
Hessian-vector products is bounded.
We begin with the first. The conclusions of
Lemma~\ref{lem:SSP} fail in iteration $k$ of Alg.~\ref{alg:SP}
with probability at most $\delta / 2k^2$, and so a union bound gives
\begin{equation*}
  \P(\mbox{any failure}) \le \sum_{k = 1}^\infty \frac{\delta}{2k^2}
  < \delta.
\end{equation*}
We perform our analysis deterministically in the event that no
failures occur.

To prove that the algorithm terminates with a second-order stationary 
point, let $K$ be the iteration at which
Alg.~\ref{alg:SP} fails to make enough progress, that is, $g(y_K) >
g(y_{K-1}) - \frac{1}{12}\rho r^3$.  Let $\Delta\opt$ minimize
model~\eqref{eqn:majorization} at $y = y_{K-1}$, and let $\Delta\subfin$ 
be the output of the call to \callSFSP{}, so that $y\subfin  = y_{K-1} + 
\Delta\subfin$ is the output of \callSP{}. The fact that $g(y_K) >
g(y_{K-1}) - \frac{1}{12}\rho r^3$ implies $\norm{\Delta\opt} \le r$
(Lemma~\ref{lem:SSP}), and 
since $\hess g(y_{K-1}) + \rho \norm{\Delta\opt}I \succeq 0$ by 
Proposition~\ref{prop:characterization}, the condition $-\rho r 
I \preceq \hess g(y_{K-1}) \preceq \beta I$ of Lemma~\ref{lem:SFSP} holds 
for the call \callSFSP{}. Similarly, Proposition~\ref{prop:characterization} 
requires that $\norm{(A+\rho \norm{\Delta\opt})^\dagger b} \le 
\norm{\Delta\opt}$ and consequently
\begin{equation*}
  \norm{(A+2\rho r I)^{-1} b} \le \norm{(A+\rho \norm{\Delta\opt})^\dagger 
    b} \le \norm{\Delta\opt} \le r,
\end{equation*}
so that the second condition of Lemma~\ref{lem:SFSP} holds.
Applying the lemma, we obtain
\begin{equation*}
  \norm{\Delta\subfin} \le \slb~~\mbox{and}~~\norm{\hess g(y_{K-1}) 
    \Delta\subfin + \grad g(y_{K-1})} \le \epsgrad + 2\rho \slb^2.
\end{equation*}

Now we demonstrate approximate stationarity.  Using that $\hess g$ is
$2\rho$-Lipschitz continuous, the bounds $\hess g(y_{K-1}) \succeq -\rho
\slb I$ and $\norms{\Delta\subfin} \le \slb$, where $\slb = \sqrt{\epsilon /
  9 \rho}$ imply
\begin{equation*}
  \hess g(y\subfin) \succeq \grad^2 g(y_{k-1}) - 2\rho 
  \norm{\Delta\subfin} I \succeq -3 \rho r I = -\sqrt{\rho \epsilon} I.
\end{equation*}
To control $y\subfin$, let $v = \grad \f[\hess g(y_{K-1}), \grad
  g(y_{K-1})](\Delta\subfin) = \hess g(y_{K-1}) \Delta\subfin + \grad
g(y_{K-1})$, noting that $\norm{v} \le \epsgrad + 2\rho \slb^2$ as
above. Moreover, the $2\rho$-Lipschitz continuity of $\hess g$ implies that
$\norm{\grad g(y\subfin) -v} \le \rho \norm{\Delta\subfin}^2 \le \rho
\slb^2$. Putting these two observations together and using $r =
\sqrt{\epsilon/(9\rho)}$ and $\epsgrad = 2\epsilon/3$, we have
the desired stationarity~\eqref{eqn:second-order-crit}:
\begin{equation*}
  \norm{\grad g(y\subfin)} \le \norm{\grad g(y\subfin)-v} + \norm{v} \le 
  \epsgrad + 3\rho r^2 = \eps.
\end{equation*}

For the final component of the proposition, we bound the total number
of Hessian-vector products the method requires.
The total number of gradient 
computation and calls to \callSSP{} is $\Touter = K = O(1)  \frac{\sqrt{\rho} 
(g(y_0) - \glb)}{\epsilon^{3/2}}$. The number of Hessian-vector products 
in  each call to \callSSP{} is at most
\begin{equation*}\label{eq:tinner-bound}
  \ceil{\sqrt{\frac{24 \beta}{\rho \slb}
    \left(4 + \half \log^2 \frac{16 \Touter^4 d}{\delta^2}\right)}}
  = O(1)
  \frac{\beta^{1/2} }{\rho^{1/4}\epsilon^{1/4}}
  \log \left[\frac{d}{\delta^2}\cdot \frac{\sqrt{\rho} (g(y_0) - 
  \glb)}{\epsilon^{3/2}}\right],
\end{equation*}
where we have used $\epsilon \le
\min\{\beta^2/\rho,\rho^{1/3}(g(y_0)-\glb)^{2/3}\}$.
Similarly simplifying the number of Hessian-vector product evaluations
in the call to \callSFSP{} gives
\begin{equation*}
\frac{1}{4}\sqrt{\frac{\beta+2\rho r}{\rho 
		r}} \log \frac{36(\beta+2\rho r)^2 
	r^2}{\epsgrad^2} = O(1)
\frac{\beta^{1/2} }{\rho^{1/4}\epsilon^{1/4}}
\log \left[\frac{\beta^{1/2} }{\rho^{1/4}\epsilon^{1/4}}\right],
\end{equation*}
and multiplying the last two displays by  $\Touter \le O(1)  
\frac{\sqrt{\rho} 
  (g(y_0) - \glb)}{\epsilon^{3/2}}$ implies the proposition.

\siam{
\bibliographystyle{abbrvnat}

}

\end{document}